\renewcommand{\sigma}{{\mathrm{\upsigma}}}
\newcommand{\V}{\mathscr{V}}
\renewcommand{\vec}[1]{{\mathchoice
                     {\mbox{\boldmath$\displaystyle{#1}$}}
                     {\mbox{\boldmath$\textstyle{#1}$}}
                     {\mbox{\boldmath$\scriptstyle{#1}$}}
                     {\mbox{\boldmath$\scriptscriptstyle{#1}$}}}}
\newcommand{\ip}[1]{\langle #1 \rangle} 
\renewcommand{\div}{\mathop\mathrm{div}\nolimits}
\renewcommand{\div}{\mathop\mathrm{div}\nolimits}
\newcommand{\curl}{\mathop\mathrm{curl}\nolimits}
\newcommand{\Ev}{\mathbb{E}}
\newcommand{\ave}[1]{\Ev\left\{ {#1} \right\}}
\newcommand{\Lop}{\mathcal{L}}
\renewcommand{\L}{\vec{L}}
\newcommand{\R}{\mathbb{R}}
\newcommand{\LPot}[1]{\L^2_{\mathrm{pot}}(\Omega,#1)}
\newcommand{\Lpot}{\LPot{T}}
\newcommand{\LSol}[1]{\L^2_{\mathrm{sol}}(\Omega,#1)}
\newcommand{\Lsol}{\LSol{T}}
\newcommand{\VPot}[1]{\V^2_{\text{pot}}(\Omega,#1)}
\newcommand{\Vpot}{\VPot{T}}
\newcommand{\VSol}[1]{\V^2_{\text{sol}}(\Omega,#1)}
\newcommand{\Vsol}{\VSol{T}}
\newcommand{\A}{\mathcal{A}}
\newcommand{\grad}[1]{\nabla {#1}}
\newcommand{\B}{\mathfrak{B}}
\newcommand{\F}{\mathcal{F}}
\newcommand{\J}{\mathcal{J}}
\renewcommand{\O}{\mathcal{D}}
\newcommand{\Es}{\mathscr{E}}
\newcommand{\Char}[1]{{{\displaystyle \mathds{1}}_{{}_{#1}}}}
\newcommand{\weak}{\stackrel{w}{\rightharpoonup}}
\newcommand{\weaks}{\stackrel{\scriptstyle{w^{\star}}}{\rightharpoonup}}
\newcommand{\eps}{\varepsilon}
\newcommand{\Norm}[2]{\left\|{#1}\right\|_{#2}}
\newcommand{\Rnn}{\R^{n\times n}}
\newcommand{\Rnnsym}{\Rnn_\mathrm{sym}}
\newtheorem{theorem}{Theorem} 
\newtheorem{lemma}{Lemma} 
\newtheorem{remark}{Remark} 
\newtheorem{definition}{Definition}
\newtheorem{corollary}{Corollary}
\newtheorem{proposition}{Proposition}
\newcommand{\abar}{a^{\scriptscriptstyle{0}}}
\newcommand{\ubar}{{u^{\scriptscriptstyle{0}}}}
\newcommand{\sbar}{\sigma^0}
\newcommand{\smallT}{\scriptscriptstyle{T}}
\newcommand{\tor}{\mathbb{T}}
\title[Homogenization of stationary and ergodic random media]{A Primer on Homogenization of
Elliptic PDEs with Stationary and Ergodic Random Coefficient Functions}
\author{Alen Alexanderian}
\address{Institute for Computational Engineering and Sciences, The University of Texas at Austin}
\email{alen@ices.utexas.edu}
\keywords{Homogenization, random media, ergodic dynamical system, stationary random field, diffusion in random media}
\subjclass[2010]{78M40;78A48;37A05;37A25}
\date{\today}
\numberwithin{equation}{section}  %
\begin{document}

\begin{abstract}
We study the problem of characterizing the effective (homogenized) properties of
materials whose diffusive properties are modeled with random fields.
Focusing on elliptic PDEs with stationary and ergodic random coefficient functions,
we provide a gentle introduction to the mathematical theory of homogenization of random media.
We also present numerical examples to elucidate the theoretical concepts and results.
\end{abstract}

\maketitle

\newcommand{\azero}{a^0}
\section{Introduction}

Homogenization is a branch of the theory of partial differential equations (PDEs)
which provides the mathematical basis for describing effective physical properties of
materials with inhomogeneous microstructures.  In this article, we study
homogenization of random media, i.e., materials whose physical properties are
modeled with random functions.  Major theoretical results on homogenization of
random media were developed first by G.C.~Papanicolaou and
S.R.S.\ Varadhan in~\cite{Papa79}, and S.\ Kozlov in~\cite{Kozlov79}.  The theory of
homogenization of random media (stochastic homogenization), in addition to the
usual analysis and PDE theory tools, relies on results from probability and
ergodic theory.  This intermixing of analysis and PDE theory concepts with
those of probability often makes this otherwise elegant theory difficult to
penetrate for those with a more PDE oriented background and who are less
familiar with the probabilistic concepts encountered in stochastic
homogenization. 

This article aims to provide a gentle introduction to stochastic homogenization
by focusing on a few key results and proving them in detail.  We consider
linear elliptic PDEs with stationary and ergodic coefficient functions, and
provide proofs of homogenization result in one space dimension and in several
space dimensions.  A summary of the requisite background materials is provided
with an expanded discussion of concepts from ergodic theory. The first
homogenization result we study concerns one-dimensional elliptic equations with
random coefficients. The proof of the one-dimensional result, which is
considerably simpler than the general $n$-dimensional case,  provides a first
exposure to combining probabilistic and functional analytic tools to derive
homogenization results.  Our discussion of the homogenization theorem in the
general $n$-dimensional case follows in similar lines as the
arguments given in~\cite{Kozlov94} with many details added to keep the concepts
and arguments accessible.  Moreover, to make the presentation
beginner-friendly, throughout the article we provide a number of motivating
numerical examples to illustrate the theoretical concepts and results that
follow.  

The target audience of this article includes graduate
students who are entering this field of research as well as mathematicians who are
new to stochastic homogenization.  The background assumed in the following is a
working knowledge of basic concepts in PDE theory, a course in linear
functional analysis, and basic concepts from measure-theoretic probability.  Reading
this article should aid those new to the field in transitioning to advanced
texts such as~\cite{Kozlov94,Chechkin} that provide a complete coverage of
stochastic homogenization.  One should also keep in mind that the general
theory of homogenization is not limited to the cases of periodic or stationary
and ergodic media, and can be applied to physical processes other
than diffusion. We refer the reader to the book~\cite{Tartar09} by L.\ Tartar,
where the author provides an in-depth presentation of mathematical theory of
homogenization as well as the historical background on development of
homogenization theory.

Let us begin our discussion of homogenization with an example.  In
Figure~\ref{fig:micro1}, we depict what a realization of a medium with random
microstructure might look like.  
\begin{figure}[ht]\centering
\includegraphics[width=0.5\textwidth]{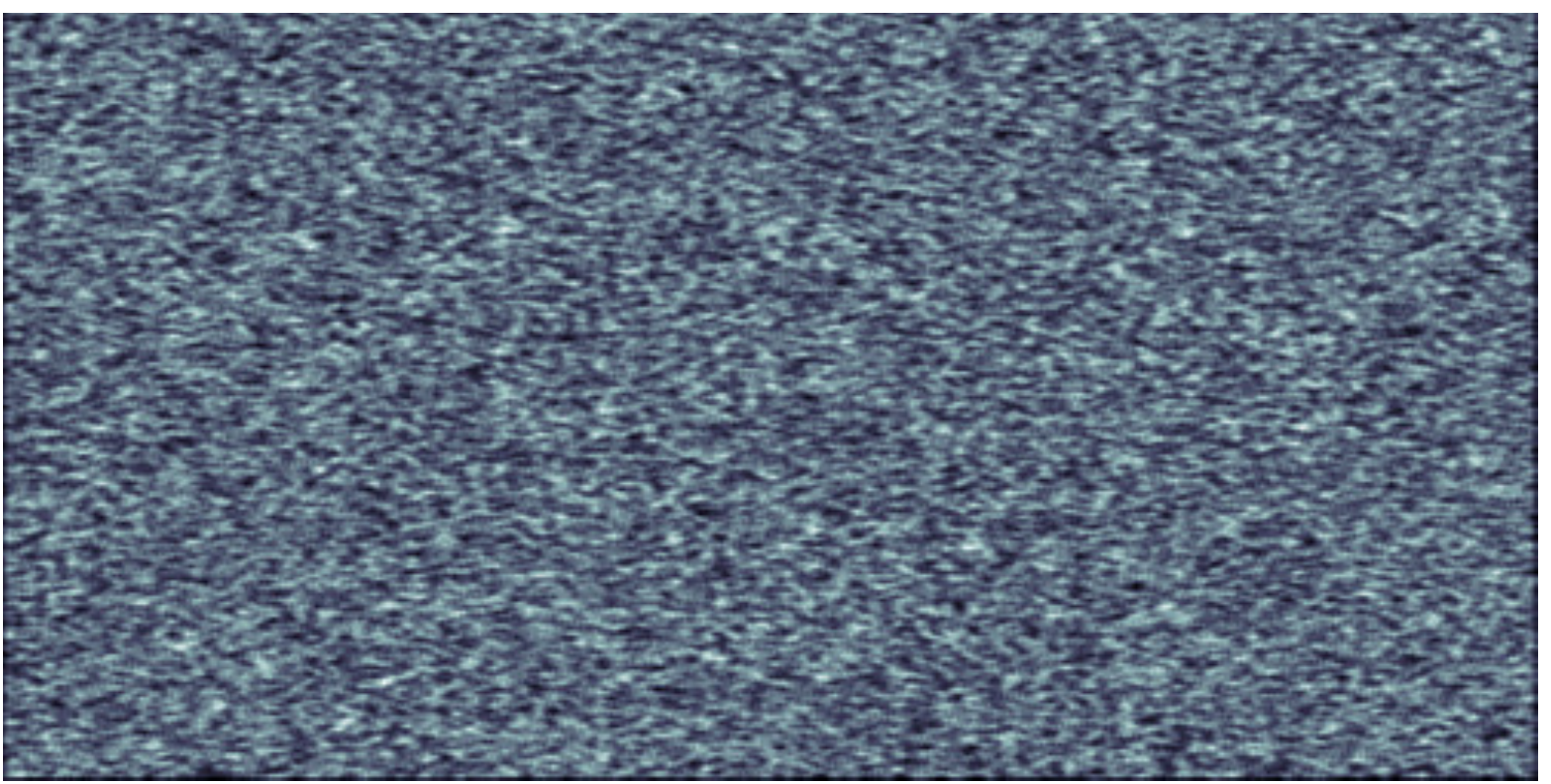} 
\caption{Depiction of a medium with random microstructure.} \label{fig:micro1} 
\end{figure}
Numerical modeling of physical processes such as diffusion through such media
is generally a challenging task, because the corresponding differential
equations have random coefficients whose realizations are rapidly oscillating
functions.  Given a diffusive medium with inhomogeneous (random)
microstructure, the goal of homogenization is to construct an effective
(homogenized) medium whose conductive/diffusive properties, in macroscale, are
close to the original medium.  The basic motivation for this is the fact that
the homogenized medium is much easier to work with.

To state the problem mathematically, we first consider a deterministic case. 
Let $A:\R^n \to \R^{n \times n}$  be a matrix valued coefficient function that is uniformly bounded 
and positive definite.  
We focus on elliptic differential operators of the form
\begin{equation}\label{equ:elliptic}
   \Lop^\eps u = -\div( A^\eps \grad u), \quad \mbox{ where } \quad A^\eps(\vec{x}) = A(\eps^{-1}\vec{x}),
\end{equation}
where $\vec{x} \in \R^n$ and $\eps > 0$ indicates a microstructural length-scale.
The coefficient functions $A^\eps$ characterize media with inhomogeneous microstructure. 
Homogenization theory studies the problem in the limit as $\eps \to 0$.

In the case of materials with random microstructure, the coefficient function $A$ in~\eqref{equ:elliptic} is a random 
field; i.e., $A = A(\vec{x}, \omega)$ where $\omega$ is an element of a sample space $\Omega$.
To motivate the basic questions that arise in homogenization, we consider some
specific numerical examples in Section~\ref{sec:motiv} below, in the context of
a problem in one space dimension.  This discussion is then used to guide the
reader through the subsequent sections of this article.

\newcommand{\ddx}{\frac{d}{dx}}
\newcommand{\dudx}{\frac{du}{dx}}
\newcommand{\dudxeps}{\frac{du^\eps}{dx}}
\section{Motivation and overview}\label{sec:motiv}
Although our discussion concerns mainly that of random structures, to develop 
some intuition we consider the case of a one-dimensional \emph{periodic} structure first. 
Consider the problem of modeling steady-state heat diffusion in a rod whose conductivity
profile is given by the function $a^\eps(x) = a(\eps^{-1} x)$ where $a$ is a bounded 
periodic function defined on the physical domain $\O$; in our example we let  $ \O = (0, 1)$.
Moreover, we assume that the temperature is fixed at zero at the end points of the interval. 
In this case, the following equation describes the steady-state temperature 
profile in the conductor,
\begin{equation} \label{equ:basic-prob-1D-per}
   \begin{aligned}
      -\ddx\left(a^\eps \dudxeps\right) = f \quad &\mbox{ in }  \O = (0,1),  \\
      u^\eps = 0 \quad &\mbox{ on } \partial\O = \{0, 1\}.
 \end{aligned}
\end{equation}
The right-hand side function $f$ describes a source term. Since $a$ is a periodic function, 
considering $a^\eps$ with successively smaller values of $\eps$ implies 
working with rapidly 
oscillating conductivity functions. 
Speaking in terms of material properties, considering successively smaller values of $\eps$
entails considering conductors with successively \emph{finer microstructure}. 
The basic question of homogenization is that of what happens as $\eps \to 0$, and whether
there is a limiting \emph{homogenized} material. 

For the purpose
of illustration, let us consider a specific example. We let the function $a(x)$ and the
right-hand side function $f(x)$ be given by
\begin{equation}\label{equ:1dprob_data} 
   a(x) = 2 + \sin(2\pi x), \quad
   f(x) = -3(2x - 1).
\end{equation}
It is clear that as $\eps \to 0$, the function $a^\eps$ becomes more and more oscillatory. 
In Figure~\ref{fig:sol_per} we plot the solution of the problem~\eqref{equ:basic-prob-1D-per} for the coefficient
functions $a^\eps$ with successively smaller values of $\eps$. 
\begin{figure}
\begin{tabular}{ccc}
\includegraphics[width=.3\textwidth]{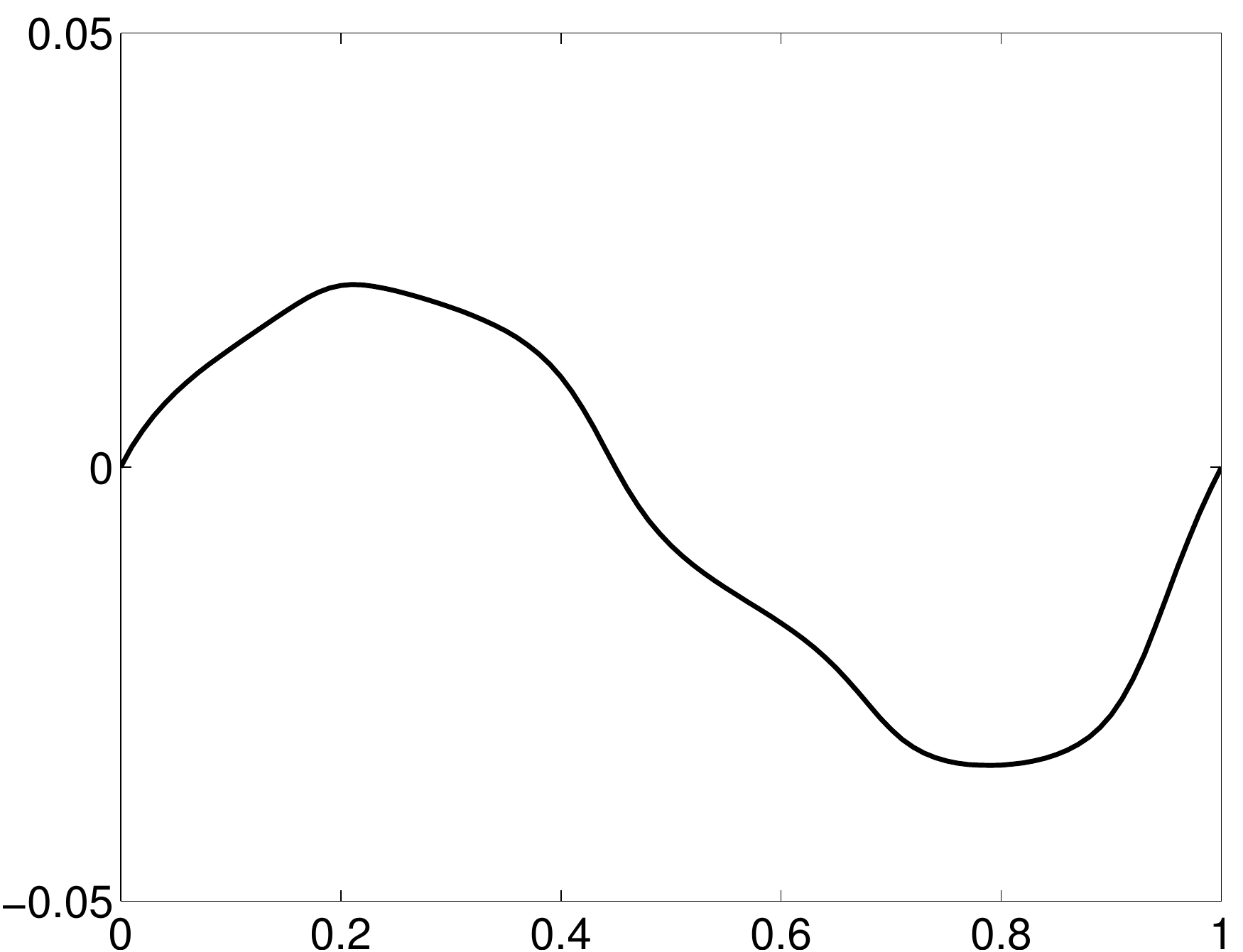}&
\includegraphics[width=.3\textwidth]{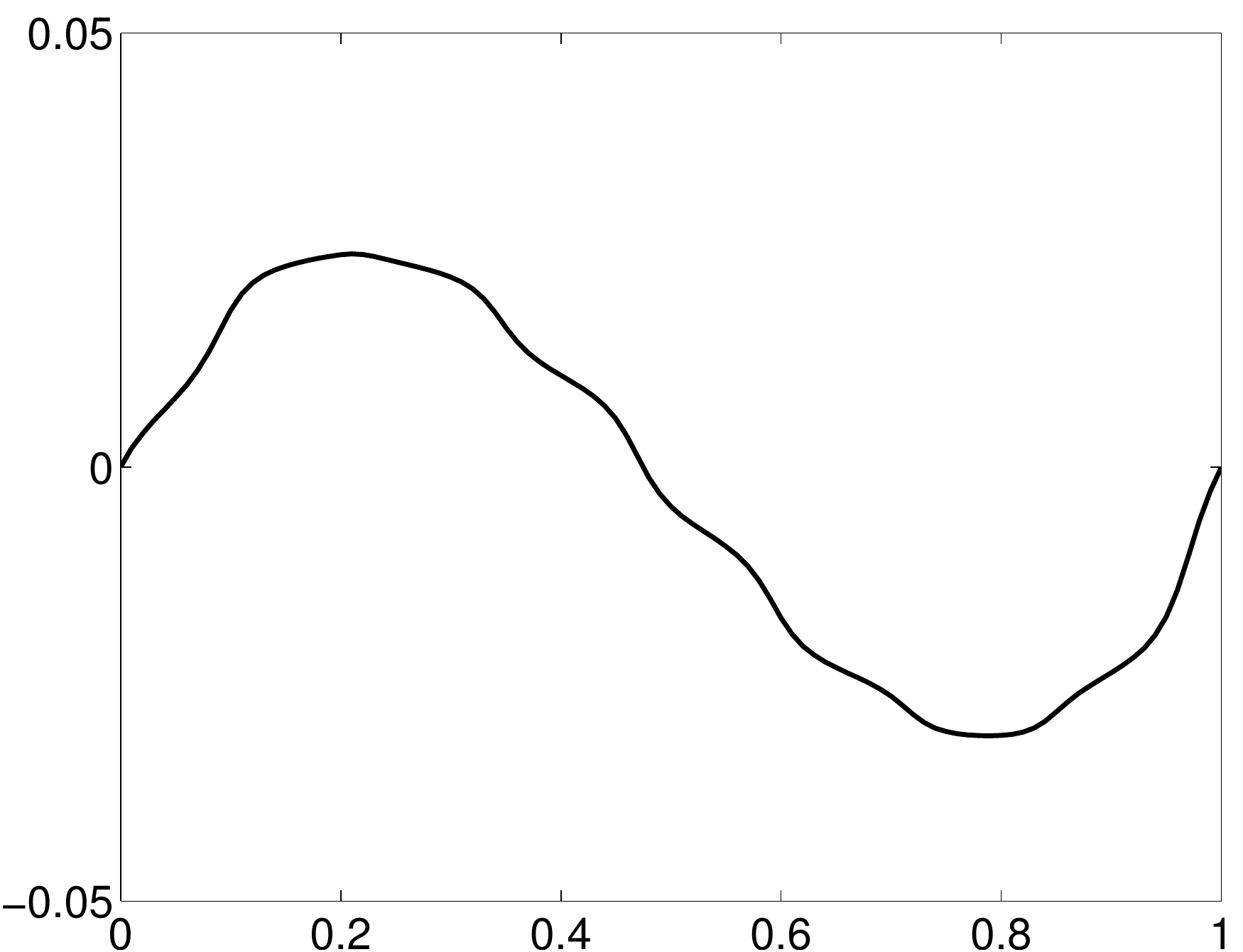}&
\includegraphics[width=.3\textwidth]{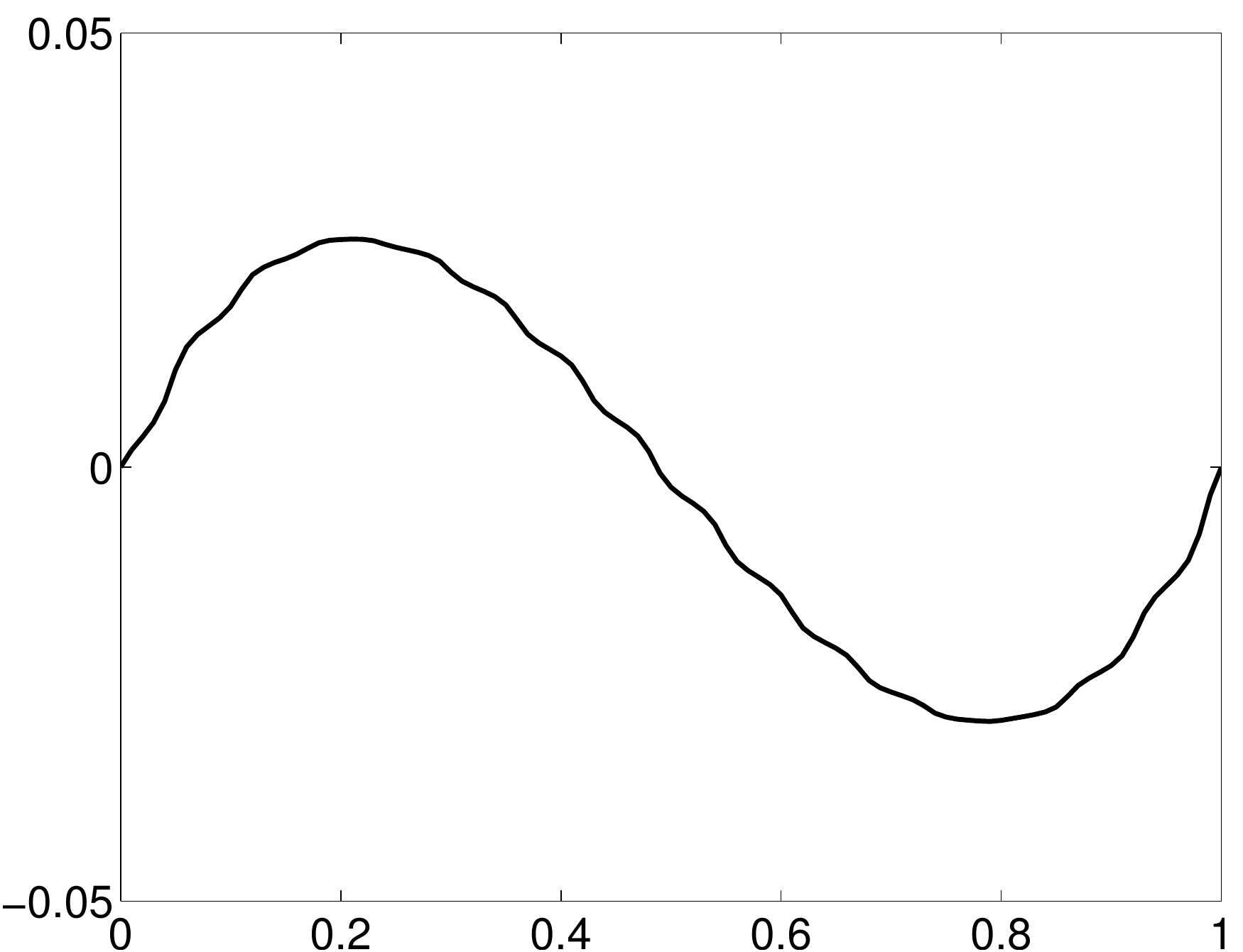}
\end{tabular}
\caption{The solutions $u^\eps$ corresponding to coefficient $a^\eps$ with $\eps = 1/4, 1/8$, and $1/16$ respectively.}
\label{fig:sol_per}
\end{figure}
The results plotted in Figure~\ref{fig:sol_per} suggest that
as $\eps$ gets smaller, the solutions $u^\eps$ seem to converge to a limit. The following are some relevant questions:
(1) Do $u^\eps$ actually converge to a limit? (2) If so in what topology does the convergence take place? (3) 
Can we describe/characterize the limit? The answers to these questions are all well-known.
In this case, the functions $u^\eps$ converge in $L^2(\O)$-norm to $\ubar$ that is the 
solution of the following problem:
\begin{equation}\label{equ:homog_equation_1d}
 \begin{aligned}
      -\ddx\left(\abar \frac{d\ubar}{dx} \right) = f \quad &\mbox{ in } \O = (0, 1),  \\
      \ubar = 0 \quad &\mbox{ on } \partial\O = \{0, 1\},
 \end{aligned}
\end{equation}
where $\abar$ is the harmonic mean of $a$ over the interval $(0, 1)$, 
\[
   \abar = \left( \int_0^1 \frac{1}{a(x)} \, dx \right)^{-1}. 
\]
The coefficient $\abar$ is called the \emph{homogenized coefficient} or the \emph{effective conductivity}.
Virtually every homogenization textbook or lecture note has some form of proof for this homogenization result. Hence, 
we just illustrate this result numerically here. Notice that with our choice of $a$ above, we 
have,
\[
\left( \int_0^1 \frac{1}{a(x)} \, dx \right)^{-1} = 
\left( \int_0^1 \frac{1}{2 + \sin(2\pi x)} \, dx \right)^{-1} =
\sqrt 3,
\]
as the homogenized coefficient. 
With this value of $\abar$, the analytic solution of the homogenized equation~\eqref{equ:homog_equation_1d} 
is given by,
\[
   \ubar(x) = \frac{1}{\sqrt{3}} x (x - 1/2) (x - 1). 
\]
In Figure~\ref{fig:convergence_per} we plot the function $\ubar$ (left plot) and 
demonstrate the convergence of $u^\eps$ to $\ubar$ by looking at $\| u^\eps - \ubar\|_{L^2(\O)}$ as $\eps \to 0$ (right plot). 
\begin{figure}[ht]\centering
\begin{tabular}{cc}
\begin{tikzpicture}[scale=.8]
\begin{axis}[
compat=newest,
width=5cm,height=3.5cm,domain=0:1,xmin=0,xmax=1,ymin=-.05,ymax=0.05,scale only axis,xlabel=$x$,
ylabel=$\ubar(x)$,
ytick={-.05,-.03,-0.01, 0, 0.01, 0.03,.05},
yticklabels={-.05,-.03,-0.01,0,0.01, 0.03,.05},
scaled y ticks = false
]
\addplot[thick]{1/sqrt(3)*x*(x-1/2)*(x-1)};
\end{axis}
\end{tikzpicture}&
\begin{tikzpicture}[scale=.8]
\begin{semilogyaxis}[width=5cm, height=3.5cm, scale only axis,ymin= 5e-5,ymax=3e-2,
    xmin = 2, xmax = 256,
    xtick={2,32,64,128,256},
    xlabel = ${\eps^{-1}}$, ylabel=$\|u^\eps - \ubar\|_{L^2(\O)}$,
compat=newest,
xlabel shift={-.15cm}
] 
\addplot [color=black, thick,mark=*,mark size=1.5pt] table[x=epsinv,y=err] {convplot_data.dat};
\end{semilogyaxis}
\end{tikzpicture}
\end{tabular}
\caption{Left: Plot of the solution $\ubar$ of the homogenized equation. 
Right: The convergence of the solutions $u^\eps$ to $\ubar$ in $L^2(\O)$; 
the black dots correspond to $\|u^{\eps_k} - \ubar\|_{L^2(\O)}$ with $\eps_k = 1/2^k$, $k = 1, \ldots, 8$.}
\label{fig:convergence_per}
\end{figure}

Now let us transition to the case of random media. In this case, the function $a$, 
which defines the conductivity profile of the material, is a random function. 
The stochastic version of~\eqref{equ:basic-prob-1D-per} is given by
\begin{equation} \label{equ:basic-prob-1D-rand}
   \begin{aligned}
      -\ddx \left(a^\eps(\cdot, \omega) \dudxeps(\cdot, \omega) \right) = f \quad &\mbox{ in }
      \O = (0, 1),  \\
      u^\eps(\cdot, \omega) = 0 \quad &\mbox{ on } \partial\O = \{0, 1\},
   \end{aligned}
\end{equation}
with $a^\eps(x, \omega) = a(\eps^{-1}x, \omega)$, and $a(x, \omega)$ a random function 
(random field). The variable $\omega$ is an element of a sample space $\Omega$, and for a fixed $\omega$, 
$a(\cdot, \omega)$ is a realization of the random function $a$. 
As an example, we consider a material
made up of tiles, each of which has conductivity of either $\kappa_1$ or $\kappa_2$, 
chosen randomly with probabilities $p$ and $1-p$ respectively, with $p \in (0, 1)$. 
A realization of the conductivity function for such a structure is depicted in Figure~\ref{fig:checker1d}, with the choices
of $\kappa_1 = 1$ and $\kappa_2 = 3$ and with $p = 1/2$. 
\begin{figure}[ht]\centering
\includegraphics[width=.45\textwidth]{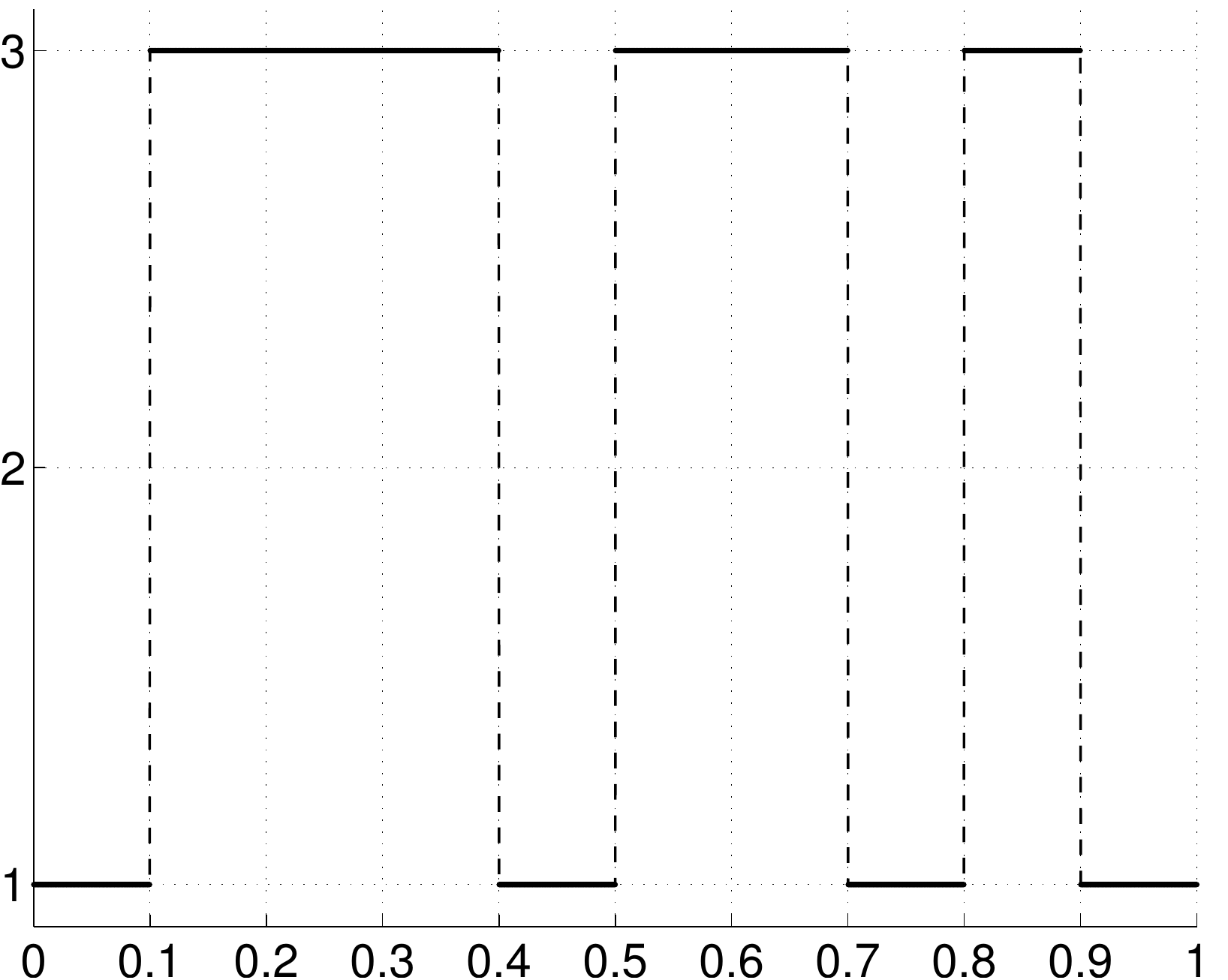}
\caption{A realization of the conductivity profile for a one-dimensional random checkerboard 
structure.}
\label{fig:checker1d}
\end{figure}
\begin{figure}[ht]\centering
\begin{tabular}{ccc}
\includegraphics[width=.3\textwidth]{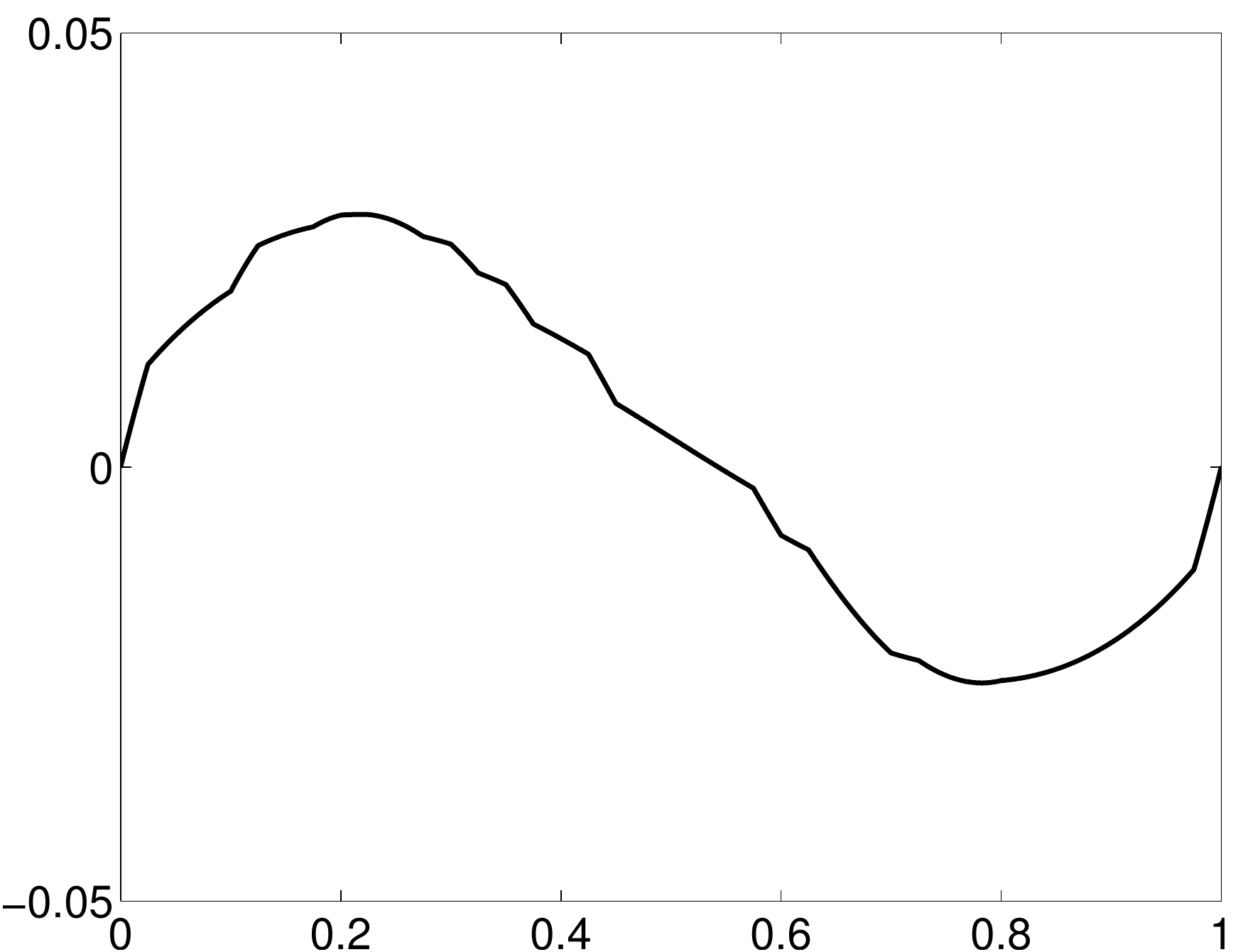}&
\includegraphics[width=.3\textwidth]{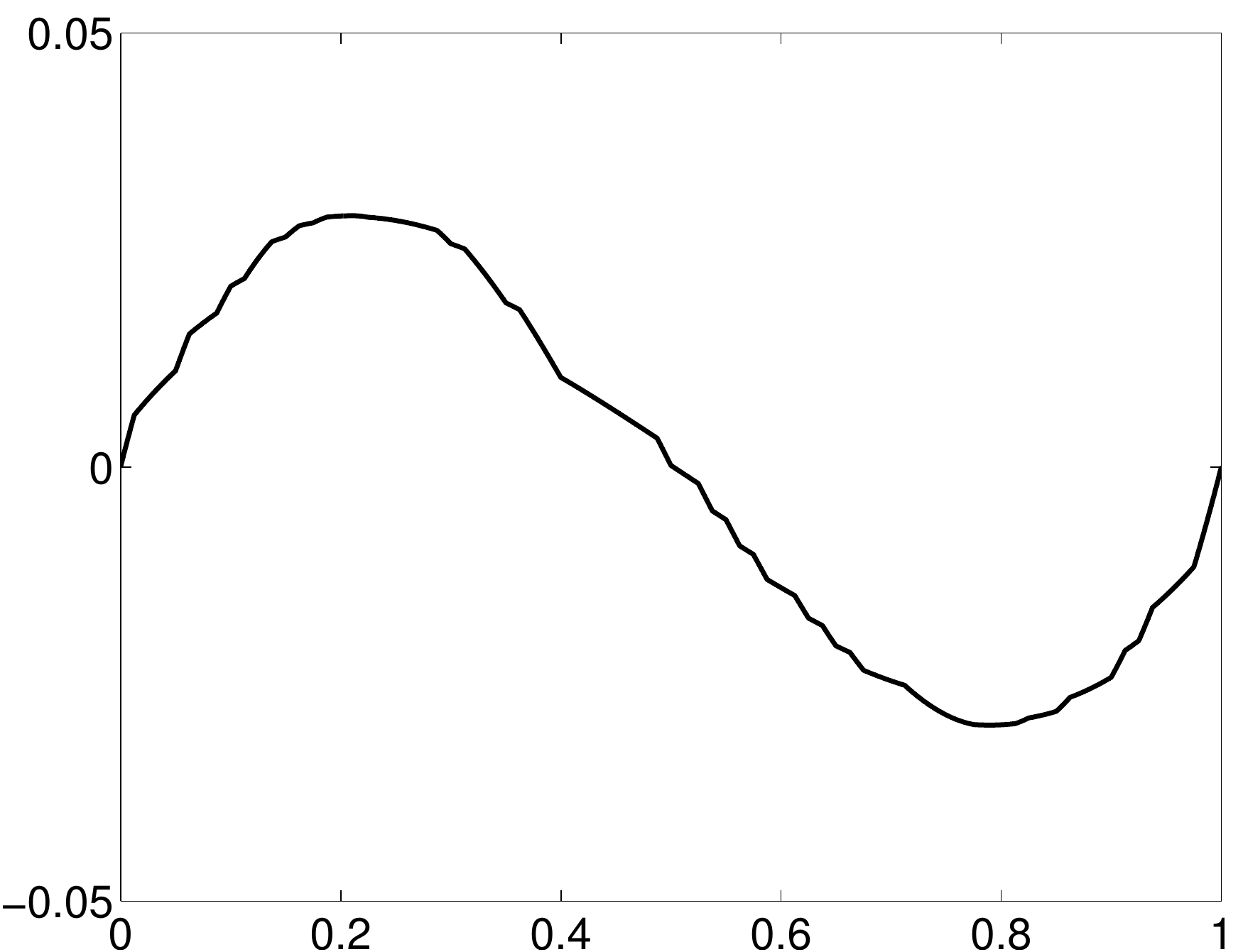}&
\includegraphics[width=.3\textwidth]{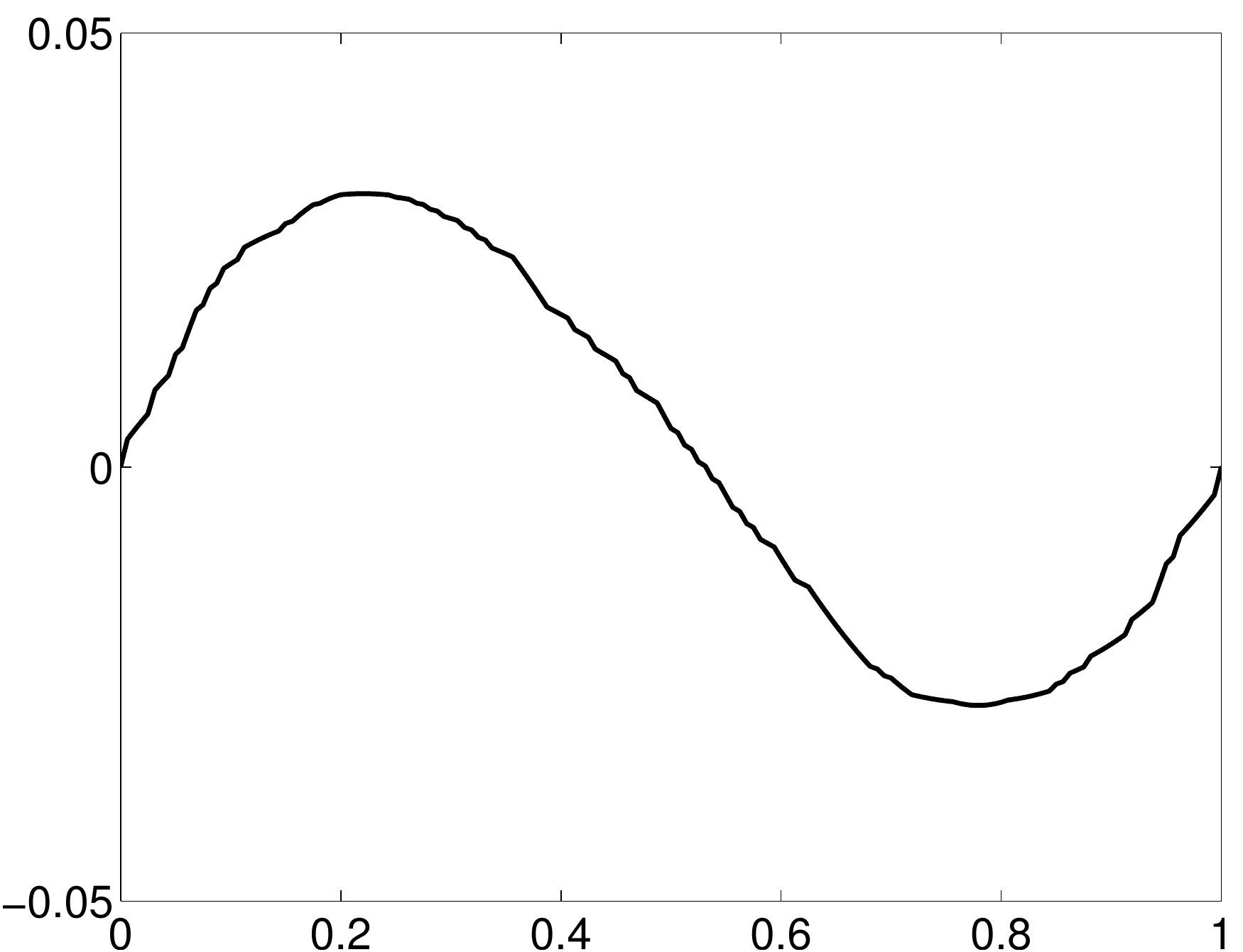}
\end{tabular}
\caption{The solutions $u^\eps$ corresponding to coefficient $a^\eps$ with $\eps = 1/4, 1/8$, and $1/16$ respectively.}
\label{fig:sol_rand}
\end{figure}
In this example, the microstructural length-scale $\eps$ determines the size of the tiles in the
random structure. 

We consider the problem~\eqref{equ:basic-prob-1D-rand} with a fixed realization (a fixed $\omega$) of 
this coefficient function, and for successively smaller values of $\eps$. 
(We continue to use the same right-hand side function $f$ defined in~\eqref{equ:1dprob_data}.)
The solutions $u^\eps(\cdot, \omega)$
of the respective problems have been plotted in Figure~\ref{fig:sol_rand}. 
These plots suggest that $u^\eps$ seems to converge to a limiting function. In what follows, we shall 
discuss the mathematical theory for such stochastic homogenization problems. 
Some relevant questions in this context include the following: (1) is there a homogenized
problem in this stochastic setting? (2) Is it possible to have a 
\emph{constant} homogenized coefficient that is independent of $\omega$? (3) Does the problem admit homogenization 
for all $\omega$? (4) In the deterministic example above periodicity of the coefficient
was the property that led to a constant homogenized coefficient, what is the
stochastic counterpart of periodicity? (5) What conditions on $a(x, \omega)$ ensure 
existence of a deterministic homogenized coefficient? 
A rigorous and clear discussion of such questions, 
which is the main point of this article, requires a systematic synthesis of concepts from functional analysis, 
PDE theory, probability theory, and ergodic theory. 

The discussion rest of this article is structured as follows. 
In Section~\ref{sec:prelim}, we collect the background concepts required in our coverage
of stochastic homogenization.
We continue our discussion by describing the setting of the homogenization problem for random media in Section~\ref{sec:homog-def}. 
Next, in Section~\ref{sec:1dhomog}, we state and prove a homogenization 
theorem in one space dimension. An interesting aspect of the analysis 
for one-dimensional random structures is the derivation of a closed-form expression for the
homogenized coefficient that is analogous to the form of the homogenized coefficient for one-dimensional periodic structures. 
Finally, in Section~\ref{sec:nDhomog}, 
we study homogenization of 
elliptic PDEs with random coefficients in several space dimensions, 
where no closed-form expressions for the homogenized coefficients are available in general.
In Section~\ref{sec:epilogue}, we conclude our discussion by giving some pointers for further
reading.
We mention that an earlier version of the exposition of the theoretical results 
in sections~\ref{sec:1dhomog} and~\ref{sec:nDhomog} appeared first in an introductory chapter 
of the PhD dissertation~\cite{alexanderian2010}. 
 
\section{Preliminaries}\label{sec:prelim}
\subsection{Background from functional analysis and Sobolev spaces}
Here we briefly discuss some background concepts from theory of PDEs and functional 
analysis that are needed in the discussion of the homogenization results in the present work. 
\paragraph{\textbf{Poincar\'{e} inequality}}
Let $\O \subseteq \R^n$ be a bounded open set with piecewise smooth boundary. 
In what follows, we denote by $L^2(\O)$ the space of real-valued square-integrable functions on $\O$ 
and denote by $C^\infty_c(\O)$ the space of smooth functions with compact support in $\O$.
The Sobolev space $H^1(\O)$ consists of functions in $L^2(\O)$ with square integrable 
first-order weak derivatives and is equipped with the norm,
\[
  \Norm{u}{H^1(\O)}^2 = \int_\O u^2\,dx 
                    + \int_\O |\grad u |^2\,dx.
\]
The space $H^1_0(\O)$ is a subspace of $H^1(\O)$ obtained as the closure of $C^\infty_c(\O)$ in $H^1(\O)$.
More intuitively, we may interpret $H^1_0(\O)$ is the subspace of $H^1(\O)$ consisting of functions
in $H^1(\O)$ that vanish on the boundary of $\O$. 
The well-known Poincar\'{e} inequality states that for a 
bounded open set $\O \subseteq \R^n$, there is a positive constant $C_p$ (depending on $\O$ only) 
such that for every $u \in H^1_0(\O)$,
\[
\int_\O u^2 \, dx \leq C_p \int_\O |\grad u|^2 \, dx.
\]

\paragraph{\textbf{Weak convergence}} 
Recall that a sequence
$\{u^k\}_1^\infty$ in a Banach space $X$ converges weakly to $u^* \in X$ 
if $\ell(u^k) \to \ell(u^*)$ as $k \to \infty$, for every bounded linear functional $\ell$ on $X$, 
in which case we write $u^k \weak u^*$.
We recall that, as a consequence of Banach-Steinhaus Theorem, weakly convergent 
sequences in a Banach space are bounded in norm. Moreover, it is a standard result 
in functional analysis that in a reflexive Banach space every bounded 
sequence has a weakly convergent subsequence. Another standard result, which will be used in 
our discussion below, is that compact 
operators on Banach spaces map weakly convergent sequences 
to strongly (norm) convergent sequences. In particular, this implies the 
following: consider a Hilbert space $H$ and a Hilbert subspace $U \subset H$
that is compactly embedded in $H$; then any bounded sequence in $U$ will have a subsequence
that converges strongly in $H$. We also recall that 
in a Hilbert space $H$ with inner-product $\ip{\cdot, \cdot}$, a sequence
$\{u^k\}$ converges weakly to $u^*$ if $\ip{u^k, \phi} \to \ip{u^*, \phi}$ for every $\phi \in H$.

\paragraph{\textbf{Compensated compactness}}
Let $\O$ be a bounded domain in $\R^n$ and suppose 
$\vec{u}^\eps$ converges strongly in $\L^2(\O) = (L^2(\O))^n$ to $\vec{u}^0$ and
$\vec{v}^\eps \weak \vec{v}^0$ in $\L^2(\O)$. In this case, it is straightforward to 
show that $\vec{u}^\eps \cdot \vec{v}^\eps \weak \vec{u}^0 \cdot \vec{v}^0$ in $L^1(\O)$. 
Consider now sequences $\vec{u}^\eps$ and $\vec{v}^\eps$ in $\L^2(\O)$, 
both of which converge weakly. 
In this case, additional conditions are needed 
to ensure the convergence of $\vec{u}^\eps \cdot \vec{v}^\eps$, in an appropriate 
sense, to the inner product of the respective weak limits.
Such problems, which arise naturally in homogenization theory, led to the
development of the concept of compensated compactness by
Murat and Tartar~\cite{Murat:1978,Tartar:1979}. 
Here we recall an important compensated compactness lemma, which specifies conditions 
that enable passing to the limit in the scalar product of weakly convergent sequences and  
concluding the weak-$\star$ convergence 
of the scalar product of the sequences
to the scalar product of their weak limits.
Weak-$\star$ convergence, which is a weaker mode of convergence than weak convergence discussed above,
takes the following form for a sequence of integrable functions:
let $\{z^\eps\}$ be a sequence in $L^1(\O)$, then $z^\eps$ convergences weak-$\star$ to $z^0$ 
if $\{z^\eps\}$ is bounded in $L^1(\O)$ and that, 
\begin{equation*}
   \lim_{\eps \to 0} \int_\O  z^\eps \phi \, dx = \int_\O z^0 \phi \, dx, 
   \quad \forall \phi \in C^\infty_c(\O).
\end{equation*}
We use the notation $z^\eps \weaks z^0$ for weak-$\star$ convergence. The fact that  
weak-$\star$ limits are unique will be important in what follows.

The following Div--Curl Lemma is a well-known compensated compactness result, 
and is a key in proving homogenization results; see~\cite{Kozlov94} for a proof of this
lemma, and~\cite[Chapter 7]{Tartar09} for a more complete discussion as well as interesting
historical remarks on the development the Div--Curl Lemma.
\begin{lemma} \label{lem:compensated}
Let $\O$ be a bounded domain in $\R^n$, and let
$\vec{p}^\eps$ and $\vec{v}^\eps$ be vector-fields in $\L^2(\O)$ 
such that
\begin{equation*}
   \vec{p}^\eps \weak \vec{p}^0, \quad \vec{v}^\eps \weak \vec{v}^0.
\end{equation*}
Moreover assume that $\curl \vec{v}^\eps = 0$ for all $\eps$
and $\div \vec{p}^\eps \to f^0$ in $H^{-1}(\O)$. Then we have
\begin{equation*}
   \vec{p}^\eps \cdot \vec{v}^\eps \weaks \vec{p}^0 \cdot \vec{v}^0.
\end{equation*}
\end{lemma}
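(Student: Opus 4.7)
The strategy is to test $\vec{p}^\eps \cdot \vec{v}^\eps$ against an arbitrary $\psi \in C^\infty_c(\O)$ and show $\int_\O \psi\, \vec{p}^\eps \cdot \vec{v}^\eps \, dx \to \int_\O \psi\, \vec{p}^0 \cdot \vec{v}^0 \, dx$. Combined with the uniform $L^1$-bound that follows from Cauchy--Schwarz and the $\L^2$-boundedness of weakly convergent sequences, this yields the asserted weak-$\star$ convergence. The governing idea is to leverage $\curl \vec{v}^\eps = 0$ to represent $\vec{v}^\eps$ as a gradient, so that integration by parts recasts the product $\vec{p}^\eps \cdot \vec{v}^\eps$ as a sum of terms each involving one strongly convergent factor and one weakly convergent factor.

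\textbf{Localization and potentials.} Since $\curl \vec{v}^\eps = 0$ yields a scalar potential only on simply connected regions, I would fix $\psi \in C^\infty_c(\O)$ with support in a ball $B \Subset \O$; the general case follows by a partition of unity subordinate to a finite cover of $\mathrm{supp}(\psi)$ by such balls. On $B$, the curl-free condition gives $\phi^\eps \in H^1(B)$ with $\nabla \phi^\eps = \vec{v}^\eps$. Normalizing $\int_B \phi^\eps \, dx = 0$ and invoking Poincaré--Wirtinger controls $\Norm{\phi^\eps}{H^1(B)}$ by the uniformly bounded $\Norm{\vec{v}^\eps}{\L^2(B)}$. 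By reflexivity and the compact embedding $H^1(B) \hookrightarrow L^2(B)$, a subsequence satisfies $\phi^\eps \weak \phi^0$ in $H^1(B)$ and $\phi^\eps \to \phi^0$ strongly in $L^2(B)$, with $\nabla \phi^0 = \vec{v}^0$ obtained by passing $\nabla \phi^\eps = \vec{v}^\eps$ to the weak limit.

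\textbf{Integration by parts and passage to the limit.} With $\vec{v}^\eps = \nabla \phi^\eps$ on $B$, integration by parts against $\psi \in C^\infty_c(B)$ yields
\begin{equation*}
\int_B \psi\, \vec{p}^\eps \cdot \vec{v}^\eps \, dx
= -\int_B \phi^\eps\, \vec{p}^\eps \cdot \nabla \psi \, dx
- \ip{\div \vec{p}^\eps, \psi \phi^\eps}_{H^{-1}(B), H^1_0(B)},
\end{equation*}
where $\psi \phi^\eps \in H^1_0(B)$ since its support is contained in $B$. The first integral converges to $-\int_B \phi^0\, \vec{p}^0 \cdot \nabla \psi \, dx$ by the strong--weak pairing ($\phi^\eps \to \phi^0$ in $L^2$, $\vec{p}^\eps \weak \vec{p}^0$ in $\L^2$). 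For the duality bracket, the hypothesis gives \emph{strong} convergence $\div \vec{p}^\eps \to f^0$ in $H^{-1}(B)$, while $\psi \phi^\eps \weak \psi \phi^0$ weakly in $H^1_0(B)$, so the pairing passes to the limit. Identifying $f^0 = \div \vec{p}^0$ (comparing the weak $H^{-1}$-limit $\div \vec{p}^\eps \weak \div \vec{p}^0$ with the strong one) and reversing the integration by parts on the limit side collapses the two terms back to $\int_B \psi\, \vec{p}^0 \cdot \vec{v}^0 \, dx$. A standard subsequence principle promotes convergence along a subsequence to convergence of the full sequence, and summing over the partition of unity delivers the global statement.

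\textbf{Main obstacle.} The principal difficulty is producing the potential $\phi^\eps$: $\curl \vec{v}^\eps = 0$ is a local/topological condition that need not yield a global scalar potential on a possibly non-simply-connected $\O$. Localizing to balls handles this cleanly, but one has to verify that the cutoff $\psi$ respects both the gradient structure and the $H^{-1}$ duality. A secondary but crucial point is that the hypothesis on $\div \vec{p}^\eps$ is \emph{strong} convergence in $H^{-1}$; mere weak convergence would leave us with a weak--weak pairing that cannot be passed to the limit, and this is precisely the asymmetry between $\vec{p}^\eps$ and $\vec{v}^\eps$ that the compensated-compactness framework exploits.
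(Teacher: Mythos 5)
Your proof is correct. Note that the paper does not actually prove Lemma~\ref{lem:compensated} — it defers to~\cite{Kozlov94} and~\cite[Chapter 7]{Tartar09} — and your argument is essentially the classical one found there: localize to balls (plus a partition of unity) so that the curl-free fields $\vec{v}^\eps$ admit potentials $\phi^\eps\in H^1(B)$, normalize and use Poincar\'e--Wirtinger and the compact embedding $H^1(B)\hookrightarrow L^2(B)$ to get strong $L^2$ convergence of the potentials along a subsequence, and then integrate by parts so that each limit passage pairs one strongly convergent factor with one weakly convergent factor, the divergence term being handled through the $H^{-1}$--$H^1_0$ duality with $\psi\phi^\eps$. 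The only step worth making explicit is that strong convergence of $\div\vec{p}^\eps$ in $H^{-1}(\O)$ does transfer to $H^{-1}(B)$, since zero extension embeds $H^1_0(B)$ isometrically into $H^1_0(\O)$; with that remark, together with your identification $f^0=\div\vec{p}^0$ by uniqueness of distributional limits and the subsequence principle (the limiting value $\int_\O \psi\,\vec{p}^0\cdot\vec{v}^0\,d\vec{x}$ does not depend on the extracted subsequence or on the choice of potential), the argument is complete, including the $L^1(\O)$ bound needed for the paper's definition of weak-$\star$ convergence.
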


\subsection{Background concepts from ergodic theory}
\label{sec:ergodic}
Here we provide 
a brief coverage of the concepts from ergodic theory that are central to the
discussion that follows in the rest of this article. 
We begin by illustrating the concept of ergodicity
through a numerical example. Let $\tor^2$ be the 
two-dimensional unit torus, given by the rectangle $[0,1) \times [0,1)$ with the 
opposite sides identified, and consider the transformation $T: \tor^2 \to \tor^2$ defined by
\begin{equation}\label{equ:cat_map}
   T(\vec{x}) = \begin{bmatrix} (2x_1 + x_2)\!\!\!\!\!\mod 1 \\ (x_1 + x_2)\!\!\mod 1\end{bmatrix}.
\end{equation}
This transformation is an instance of a hyperbolic toral authomorphism~\cite{BrinStuck2002}, 
and is commonly referred to as Arnold's Cat Map, named after V.I.\ Arnold who
illustrated the behavior of the mapping by considering its repeated applications to an image of a cat~\cite{ArnoldAvez68}.

For a given $\vec{x}_0 \in \tor^2$, 
we call the sequence of the points $\{ T^n(\vec{x}_0)\}_{n = 1}^\infty$ the orbit 
of $\vec{x}_0$, where $T^n$ means $n$ successive applications of $T$.  
In Figure~\ref{fig:cat_map}, we depict a portion of the orbit of two different 
points given by $\vec{x}_0 = (1/32, \pi/32)$ 
and $\vec{y}_0 = (1/32, 1/32)$ in the left and right images, respectively. 
\begin{figure}\centering
\includegraphics[width=.35\textwidth]{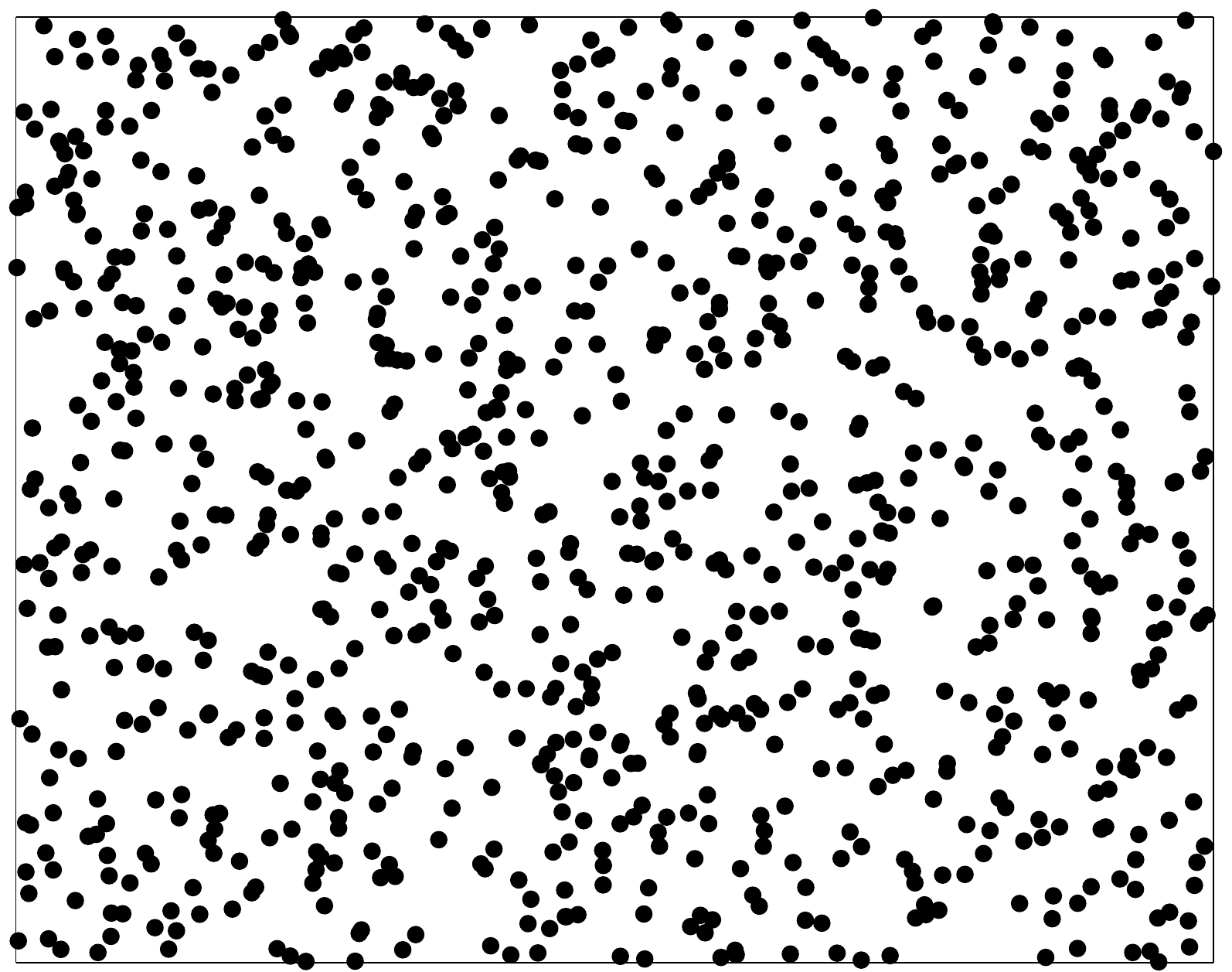}
\includegraphics[width=.35\textwidth]{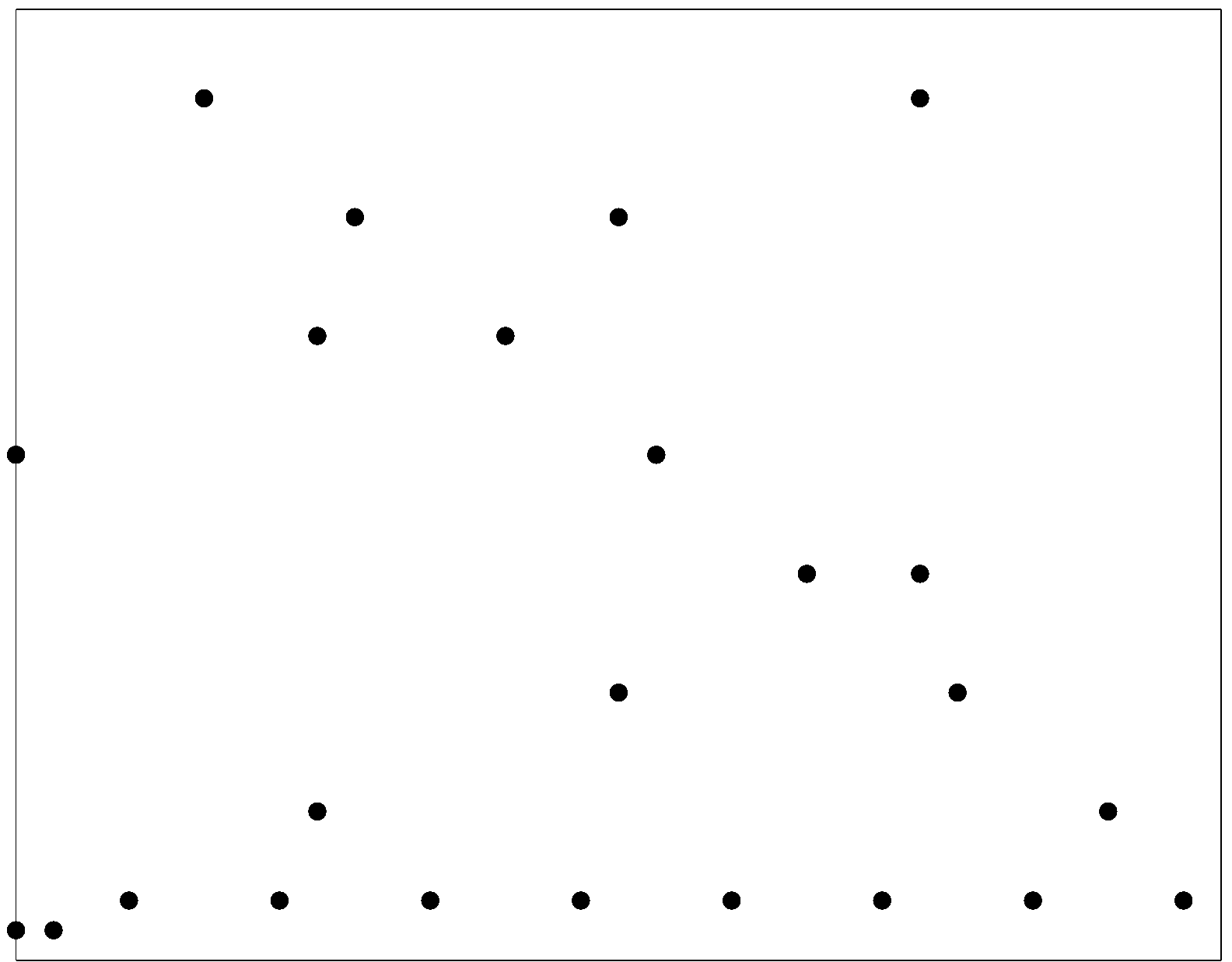}
\caption{For $T$ given in~\eqref{equ:cat_map}, we look at the orbits $\{T^n(\vec{x}_0)\}_{n = 1}^N$ with $\vec{x}_0 = (1/32, \pi/32)$ (left plot) 
and $\{T^n(\vec{y}_0)\}_{n = 1}^N$ with $\vec{y}_0 = (1/32, 1/32)$ (right plot), for $N = 1000$ iterations.}
\label{fig:cat_map}
\end{figure}
The left plot in Figure~\ref{fig:cat_map} suggests that the 
successive iterates $T^n(\vec{x}_0)$ do a good job of visiting the
entire state space $\tor^2$. On the other hand, the plot on the right sends the opposite message.
Note, however, that the coordinates of $\vec{y}_0$ in the latter case are both rational. 
It is known~\cite{BrinStuck2002} that for this specific example the set of points 
with rational coordinates are precisely the set of periodic 
points of the transformation $T$; thus, since the Lebesgue measure of 
this set is zero, we have that \emph{for almost all} $\vec{x}_0 \in \tor^2$, 
the behavior in the left plot of Figure~\ref{fig:cat_map} holds. 
This almost sure ``space filling'' property of the system defined by $T$ is a consequence 
of \emph{ergodicity}. 

Next, consider an integrable function $f:\tor^2 \to \R$. Due to the ``space filling'' property of $T$, 
we may intuitively say that for almost all $\vec{x}_0$ and for $N$ sufficiently large, the 
set of points $\left\{ f\big(T^n(\vec{x}_0)\big)\right\}_{n = 1}^N$ provide
a sufficiently rich sampling of the function $f$ and that 
\[
   \frac1N \sum_{n = 1}^N f(T^n(\vec{x}_0)) \approx \frac1{|\tor^2|}\int_{\tor^2} f(\vec{x}) \, d\vec{x}.
\]
(Here $|\tor^2|$ is the Lebesgue measure of $\tor^2$, which is equal to one, but
is included in the expression for clarity.)
The above observation leads to the usual intuitive understanding of ergodicity:
\emph{for an ergodic system, time averages equal space averages}. 
In the present example, time is specified by $n$, that is 
we have a system with discrete time.

The remainder of this section contains a brief discussion of the concepts
from probability and ergodic theory that we need in our coverage
of stochastic homogenization. For more details on ergodic theory, we refer the reader 
to~\cite{Cornfeld82,Walters82,BrinStuck2002}. See also~\cite{Choe05} for an accessible
introduction to ergodic theory, where the author incorporates many illustrative computer 
examples in the presentation of the theoretical concepts. 

\paragraph{\textbf{Random variables and measure preserving transformations}}
Let $(\Omega, \F, \mu)$ be a probability space. The set $\Omega$ is a sample space,
$\F$ is an appropriate sigma-algebra on $\Omega$, and $\mu$ is a probability measure.
A random variable is a $\F/\B(\R)$ measurable function from $\Omega$ to $\R$, where 
$\B(\R)$ denotes the Borel sigma-algebra on~$\R$. Given a random variable $f:(\Omega,\F, \mu) \to (\R, \B(\R))$,
we denote its expected value by, 
\[
   \ave{f} := \int_\Omega f(\omega) \, \mu(d\omega).
\] 
\begin{definition}
Let $(\Omega_1, \F_1, \mu_1)$ and $(\Omega_2, \F_2, \mu_2)$
be measure spaces. A
transformation $T: \Omega_1 \to \Omega_2$ is called measure preserving if
it is measurable, i.e. for all $E \in \F_2$ 
$T^{-1}(E) \in \F_1$, and satisfies
\begin{equation}\label{equ:meas_pres}
\mu_1\left(T^{-1}(E)\right) = \mu_2(E), \quad \mbox{for all $E \in \F_2$}.
\end{equation}
\end{definition}
\noindent
An example of a measure preserving transformation is
the one defined in~\eqref{equ:cat_map}, which preserves the Lebesgue measure on $\tor^2$. 

\paragraph{\textbf{Dynamical systems and ergodicity}}
Let $T$ be a measure preserving transformation on $(\Omega, \F, \mu)$. 
Interpreting the elements of $\Omega$ as possible states of a system, 
we may consider $T$ as the law of the time evolution of the system.
That is, if we denote by $s_n$, $n \geq 0$, the state of the system at $t = n$, 
and let $s_0 = \omega_0$ for 
some $\omega_0 \in \Omega$,
then, $s_1 = T(\omega_0)$, 
$s_2 = T(s_1) = T(T(\omega_0)) = T^2(\omega_0)$,  and in general,
$s_n = T^n(\omega_0)$, for $n \geq 1$. 
This way, $T$ defines a measurable dynamics on $\Omega$. The dynamical system
so constructed is called a \emph{discrete time} measure-preserving dynamical system.

Suppose there is a set $E \in \F$ such that $\omega \in E$ if and only if $T(\omega) \in E$.
In such a case, the study of the dynamics of $T$ on $\Omega$ can be  
\emph{reduced} to its dynamics on $E$ and $\Omega \setminus E$. 
The set $E$ so described is called a \emph{$T$-invariant} set. We say that $T$ ergodic
if for every $T$-invariant set $E$, we have either $\mu(E) = 0$ or $\mu(E) = 1$.

\paragraph{\textbf{$n$-dimensional dynamical systems}}
In addition to discrete time dynamical systems described above, we can also 
consider \emph{continuous time} dynamical systems that are given by 
a family of measurable transformations $T = \{ T_t\}_{t \in \mathcal{S}}$ where $\mathcal{S} \subseteq \R^n$ with $n = 1$. In 
the case $\mathcal{S} = [0, \infty)$, we call $T$ a \emph{semiflow} and in the case $\mathcal{S} = (-\infty, \infty)$, we call
$T$ a \emph{flow}. In the present work, we are interested in a more general type of dynamical 
system where $\mathcal{S} = \R^n$ with $n \geq 1$.
\begin{definition}
An \emph{$n$-dimensional measure-preserving dynamical system} $T$
on $\Omega$ is a family of measurable 
mappings $T_\vec{x} : \Omega \to \Omega$,
parametrized by $\vec{x} \in \R^n$, satisfying:
\begin{enumerate}
\item 
\(
        T_{\vec{x}+\vec{y}} = T_\vec{x}  \circ T_\vec{y} \text{ for all }
        \vec{x}, \vec{y} \in \R^n.
\)
\item 
\(
        T_{\vec{0}} = I,
\)
where $I$ is the identity map on $\Omega$.
\item
The dynamical system is \emph{measure preserving} in the sense that
for every $\vec{x}\in\R^n$ and $F\in\F$ we have
$\mu\big(T_\vec{x}^{-1}(F)\big) = \mu(F)$.
\item
For every measurable function $g:(\Omega, \F, \mu) \to (X, \Sigma)$ where $(X, \Sigma)$ 
is some measurable space, the composition
$g\big(T_\vec{x}(\omega)\big)$ 
defined on $\R^n \times \Omega$ is a $\big( \B(\R^n) \otimes \F\big)/\Sigma$ measurable 
function.
\end{enumerate}
\end{definition}
The notions of $T$-invariant functions and sets
(where $T$ is an $n$-dimensional dynamical system) are made 
precise in the following definition~\cite{Cornfeld82}.
\begin{definition}\label{def:invariant}
Let $(\Omega, \F, \mu)$ be a probability space and
$\{T_\vec{x}\}_{\vec{x} \in \R^n}$ an n-dimensional measure-preserving 
dynamical system. A measurable function $g$ on $\Omega$ is $T$-invariant if
for all $\vec{x} \in \R^n$,
\begin{equation}\label{equ:inv_func}
   g\big(T_\vec{x} (\omega)\big) = g(\omega), \quad \text{for all }\omega \in \Omega.
\end{equation}
A measurable set $E \in \F$ is $T$-invariant if its
characteristic function $\Char{E}$ is $T$-invariant.
\end{definition}
It is straightforward to show that a $T$-invariant set $E$ defined according to the above definition 
can be defined equivalently as follows: a set $E$ is $T$-invariant if
\begin{equation*}
      T_\vec{x}^{-1}(E) = E, \quad \forall \vec{x} \in \R^n.
\end{equation*}
As is often the case in measure theory, 
we can replace ``for all $\omega \in \Omega$'' by ``for almost all $\omega \in \Omega$'' in Definition~\ref{def:invariant}. 
A function that satisfies~\eqref{equ:inv_func} for all $\vec{x}$ and almost all 
$\omega \in \Omega$ is called $T$-invariant mod 0. 
Also, given two measurable sets $A$ and $B$, we write 
$A = B$ mod 0, if their symmetric difference,
$A\Delta B = (A\setminus B) \cup (B \setminus A)$ has measure
zero; note that this means $A$ and $B$ agree modulo a set of measure zero. 
We call a measurable set $T$-invariant mod 0 if its characteristic function 
is $T$-invariant mod 0.

One can show (cf.~\cite{Cornfeld82}) that for any measurable function $g$ on $\Omega$ that is
$T$-invariant mod 0, there exists a $T$-invariant function $\tilde{g}$ such that $g = \tilde{g}$ almost 
everywhere. Similarly, for any $T$-invariant mod 0 set $E$, there exists a $T$-invariant set $\tilde{E}$
such that $\mu(\tilde{E}\Delta E) = 0$. 
Hence, in what follows, we will not distinguish between $T$-invariance mod 0 and $T$-invariance.

With these background ideas in place, we define the notion of an $n$-dimensional
ergodic dynamical system.
\begin{definition}
Let $(\Omega, \F, \mu)$ be a probability space and
$T = \{T_\vec{x}\}_{\vec{x} \in \R^n}$ an n-dimensional measure-preserving dynamical
system. We say $T$ is ergodic if all $T$-invariant sets have measure of either zero or one.
\end{definition}
\noindent
Let us also recall the following useful characterization
of an ergodic dynamical system \cite{Cornfeld82, Kozlov94}, in terms of
invariant functions:
\textit{a dynamical system is ergodic if every $T$-invariant 
function is constant almost everywhere}; that is,
\begin{equation} \label{equ:ergodic-orbit}
        \Big[
                g\big(T_\vec{x}(\omega)\big) = g(\omega)
                \text{ for all $\vec{x}$ and almost all } \omega
        \Big]
        \quad\Rightarrow\quad
        g \equiv \text{const $\mu$-a.e.}
\end{equation}
Let $\{T_\vec{x}\}_{\vec{x} \in \R^n}$ be a dynamical system.
Corresponding to a function $g : \Omega \to X$ (where $X$ is any set) 
we define the function $g_T : \R^n \times \Omega \to X$ by
\begin{equation} \label{equ:realization-def}
        g_T\big(\vec{x},\omega\big) = g\big(T_\vec{x}(\omega)\big),
        \quad x\in\R^n, \omega\in\Omega.
\end{equation}
For each $\omega \in \Omega$, the function
$g_T\big(\cdot,\omega\big) : \R^n \to X$ is called 
a \emph{realization} of $g$.
\paragraph{\textbf{The Birkhoff Ergodic Theorem}}
Ergodicity of a dynamical system has many profound implications.
Of particular importance to our discussion is the Birkhoff Ergodic Theorem.
Before stating Birkhoff's theorem, we define the following notion of 
mean-value for functions.
\begin{definition}
Let $g \in L^1_\text{loc}(\R^n)$. 
A number $M_g$ is called the mean-value of $g$ if 
for every Lebesgue measurable bounded set $K \subset \R^n$, 
\[
\lim_{\eps \to 0} \frac{1}{|K|} \int_K g(\eps^{-1}\vec{x}) \, d\vec{x} = M_g.
\]
Here $|K|$ denotes the Lebesgue measure of $K$.
\end{definition}
The following result, due to Birkhoff, is a major result
in ergodic theory~\cite{Cornfeld82},
which as we will see shortly, plays
a central role in proving homogenization results for random elliptic operators. 
The statement of Birkhoff's theorem given below follows the presentation in~\cite{Kozlov94}.
\begin{theorem} \label{thm:Birkhoff}
Let $(\Omega, \F, \mu)$ be a probability space, and
suppose $T = \{T_\vec{x}\}_{\vec{x} \in \R^n}$ is a measure-preserving
dynamical system on $\Omega$. Let $g \in L^p(\Omega)$ with
$p \geq 1$. Then for almost all $\omega \in \Omega$ the realization
$g_T(\vec{x}, \omega)$, as defined in~\eqref{equ:realization-def}, 
has a mean value $M_g(\omega)$ in the following sense:
defining $g_T^\eps(\vec{x}, \omega) = g_T(\eps^{-1}\vec{x}, \omega)$ 
for $\eps > 0$, one has
\begin{equation*}
   g_T^\eps(\cdot, \omega) \weak M_g(\omega) \quad \text{ in } L^p_\text{loc}(\R^n), \, \mbox{ as } \eps \to 0,
\end{equation*}
for almost all $\omega \in \Omega$.
Moreover, $M_g$ is a $T$-invariant function; that is,
\begin{equation} \label{equ:invariance}
 M_g\big(T_\vec{x}(\omega)\big) = 
 M_g(\omega) \quad \forall x \in \R^n,~\mu\mbox{-a.e}.
\end{equation}
Also,
\begin{equation} \label{equ:expectation}
   \int_\Omega g(\omega) \, \mu(d\omega) 
      = \int_\Omega M_g(\omega) \, \mu(d\omega).
\end{equation}
\end{theorem}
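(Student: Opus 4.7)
My plan is to derive Birkhoff's theorem for $\R^n$-actions in three stages: first get a.e.\ pointwise convergence of ergodic averages over expanding cubes from a classical result (the Wiener multi-parameter ergodic theorem), then translate this into the requested weak convergence in $L^p_{\mathrm{loc}}$, and finally verify the $T$-invariance of $M_g$ together with the expectation identity~\eqref{equ:expectation}.

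Step 1 (pointwise convergence of averages). I would first observe that, by the change of variable $\vec{y}=\eps^{-1}\vec{x}$,
\begin{equation*}
   \frac{1}{|K|}\int_K g_T^\eps(\vec{x},\omega)\,d\vec{x}
   \;=\; \frac{1}{|\eps^{-1}K|}\int_{\eps^{-1}K} g\bigl(T_{\vec{y}}(\omega)\bigr)\,d\vec{y},
\end{equation*}
so that proving the existence of the mean value $M_g(\omega)$ reduces to showing a.s.\ convergence of ergodic averages over the dilated sets $\eps^{-1}K$. For $K$ a cube this is precisely the $n$-dimensional Wiener ergodic theorem, which follows by combining the standard maximal inequality for $\R^n$-actions with a density argument from $L^\infty$ (where the limit is obvious) to $L^p$. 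For a general bounded measurable $K$, I would approximate from above and below by finite unions of small cubes and control the error using the maximal function, exploiting the fact that $|\partial(\eps^{-1}K)|$ is lower order compared to $|\eps^{-1}K|$ when $K$ is bounded. This delivers a measurable function $M_g(\omega)$ and a full-measure set $\Omega_0$ on which the averages converge simultaneously for every cube with rational vertices (then, by the sandwich argument, for every bounded measurable $K$).

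Step 2 (weak $L^p_{\mathrm{loc}}$ convergence). To upgrade the pointwise convergence of averages to weak convergence of $g_T^\eps(\cdot,\omega)$ toward the constant $M_g(\omega)$, fix $\omega\in\Omega_0$ and a bounded open set $U\subset\R^n$. Simple functions are dense in $L^{p'}(U)$ (the dual exponent), so it suffices to check that $\int_U g_T^\eps \Char{A}\,d\vec{x} \to M_g(\omega)\,|A|$ for every measurable $A\subset U$; this is the mean-value property established in Step 1. A uniform bound on $\Norm{g_T^\eps(\cdot,\omega)}{L^p(U)}$, needed for the density argument, follows from the maximal inequality applied to $|g|^p$ together with the Birkhoff average of $|g|^p$ itself being a.s.\ finite. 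This yields $g_T^\eps(\cdot,\omega)\weak M_g(\omega)$ in $L^p_{\mathrm{loc}}(\R^n)$ for every $\omega\in\Omega_0$.

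Step 3 (invariance and expectation). For the invariance~\eqref{equ:invariance}, I compute
\begin{equation*}
   \frac{1}{|Q_R|}\int_{Q_R} g\bigl(T_{\vec{y}}(T_{\vec{x}}\omega)\bigr)\,d\vec{y}
   \;=\; \frac{1}{|Q_R|}\int_{Q_R+\vec{x}} g\bigl(T_{\vec{y}}(\omega)\bigr)\,d\vec{y},
\end{equation*}
which differs from the centered average by an integral over a set of measure $O(R^{n-1})$ when $|\vec{x}|$ is fixed. Passing to the limit $R\to\infty$ gives $M_g(T_\vec{x}\omega)=M_g(\omega)$ for every $\vec{x}$ and a.e.\ $\omega$, hence (after choosing a $T$-invariant representative) the pointwise statement~\eqref{equ:invariance}. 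Finally, identity~\eqref{equ:expectation} is obtained by integrating the average $\frac{1}{|Q_R|}\int_{Q_R} g(T_\vec{y}\omega)\,d\vec{y}$ in $\omega$: Fubini plus the measure-preserving property reduce the inner integral to $\ave{g}$, so the left-hand side equals $\ave{g}$ for every $R$, while the right-hand side converges to $\ave{M_g}$ by dominated convergence (the maximal function provides an $L^1$ dominant when $p=1$; for $p>1$ the averages are uniformly bounded in $L^p(\Omega)$ and equi-integrable).

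The main obstacle is Step 1: the multi-parameter maximal inequality and the cube-to-arbitrary-bounded-set extension are the technical heart, and all subsequent weak convergence, invariance, and expectation statements are comparatively routine consequences. In a primer it would be natural to cite the Wiener ergodic theorem as a black box and concentrate the presentation on Steps 2 and 3.
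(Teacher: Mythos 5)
The paper does not prove Theorem~\ref{thm:Birkhoff} at all: it is quoted as a classical result, with the statement following \cite{Kozlov94} and the proof deferred to the ergodic-theory literature (\cite{Cornfeld82}), so there is no in-paper argument to compare yours against. Your route is the standard one behind that citation, and its architecture is sound: the Wiener/Tempelman maximal inequality plus density gives a.e.\ convergence of averages over dilated cubes; the sandwich by finite unions of rational cubes (with the error controlled by the a.e.\ finite maximal function, or by the already-convergent averages of $|g|$ over slightly larger cubes) extends this to arbitrary bounded measurable $K$, which is exactly the mean-value notion the paper uses; and testing against simple functions together with the uniform local bound obtained by applying Step~1 to $|g|^p$ correctly upgrades this to $g_T^\eps(\cdot,\omega)\weak M_g(\omega)$ in $L^p_{\mathrm{loc}}(\R^n)$. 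The invariance argument by translating the cubes is also the standard one, with the mod-0 issue handled as the paper itself does (passing to a $T$-invariant representative).

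The one genuine flaw is in your justification of \eqref{equ:expectation} when $p=1$: the ergodic maximal function of an $L^1(\Omega)$ function is only of weak type $(1,1)$ and is in general \emph{not} integrable, so it cannot serve as an $L^1$ dominant for passing to the limit in $\int_\Omega A_Rg\,d\mu$, where $A_Rg(\omega)=\frac{1}{|Q_R|}\int_{Q_R}g\big(T_{\vec{y}}(\omega)\big)\,d\vec{y}$. (You have the cases reversed: for $p>1$ the maximal function lies in $L^p(\Omega)\subset L^1(\Omega)$ and dominated convergence is fine; for $p=1$ it fails.) The correct repair is uniform integrability of the family $\{A_Rg\}_R$: for any measurable $E$, writing $\Char{E}(\omega)\,|g(T_{\vec{y}}\omega)|=\big(\Char{F_{\vec{y}}}\,|g|\big)(T_{\vec{y}}\omega)$ with $F_{\vec{y}}=T_{-\vec{y}}^{-1}(E)$ and using that $T_{\vec{y}}$ preserves $\mu$, one gets $\int_E |A_Rg|\,d\mu\le\sup_{\vec{y}}\int_{F_{\vec{y}}}|g|\,d\mu$ with $\mu(F_{\vec{y}})=\mu(E)$, which is small uniformly in $R$ by absolute continuity of the integral of $|g|$. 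Combined with the a.e.\ convergence $A_Rg\to M_g$, Vitali's convergence theorem gives $A_Rg\to M_g$ in $L^1(\Omega)$, while Fubini and measure preservation give $\int_\Omega A_Rg\,d\mu=\ave{g}$ for every $R$; together these yield \eqref{equ:expectation}. (Equivalently, invoke the mean ergodic theorem in $L^1$.) With that substitution your outline is a correct proof sketch.
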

Notice that if the dynamical system $T$ in Birkhoff's theorem 
is ergodic, then, the mean value $M_g$ is constant almost everywhere 
and is given by $M_g = \ave{g}$.
We record this observation in the following Corollary of Theorem~\ref{thm:Birkhoff}:
\begin{corollary} \label{cor:Birkhoff}
Let $(\Omega, \F, \mu)$ be a probability space, and
suppose $T = \{T_\vec{x}\}_{\vec{x} \in \R^n}$ is a measure-preserving
and \emph{ergodic} dynamical system on $\Omega$. Let $g \in L^p(\Omega)$ with
$p \geq 1$. 
Define $g_T^\eps(\vec{x}, \omega) = g_T(\eps^{-1}\vec{x}, \omega)$ for $\eps > 0$.
Then, for almost all $\omega \in \Omega$
\begin{equation*}
   g_T^\eps(\cdot, \omega) \weak \int_\Omega g(\omega) \, \mu(d\omega)  \quad \text{ in } L^p_\text{loc}(\R^n), \, \mbox{ as } \eps \to 0.
\end{equation*}
\end{corollary}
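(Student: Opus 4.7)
The plan is to derive this corollary directly from Theorem~\ref{thm:Birkhoff} by combining the $T$-invariance of the mean value $M_g$ with the ergodicity characterization recorded in~\eqref{equ:ergodic-orbit}. I would first apply Birkhoff's theorem to the function $g \in L^p(\Omega)$ to conclude that for almost every $\omega$ the rescaled realizations satisfy
\[
   g_T^\eps(\cdot, \omega) \weak M_g(\omega) \quad \text{in } L^p_\text{loc}(\R^n) \text{ as } \eps \to 0,
\]
with $M_g$ a $T$-invariant function in the sense of~\eqref{equ:invariance}.

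Next I would invoke ergodicity. Since $T$ is assumed ergodic and $M_g$ is $T$-invariant, the characterization~\eqref{equ:ergodic-orbit} forces $M_g$ to be constant $\mu$-almost everywhere. Call this constant $c$. To identify $c$ I would use the integral identity~\eqref{equ:expectation} from Birkhoff's theorem, together with the fact that $\mu$ is a probability measure:
\[
   \int_\Omega g(\omega)\, \mu(d\omega) = \int_\Omega M_g(\omega)\, \mu(d\omega) = c \cdot \mu(\Omega) = c.
\]
Hence $c = \ave{g}$, and substituting back into the weak convergence statement yields the desired limit.

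There is no real obstacle here; the corollary is essentially a bookkeeping consequence of Theorem~\ref{thm:Birkhoff} and the definition of ergodicity. The only minor point worth articulating carefully is that the null set on which $M_g$ fails to equal the constant $\ave{g}$ can be absorbed into the null set on which the weak convergence of $g_T^\eps(\cdot,\omega)$ fails, so that a single full-measure set of $\omega$ works for the final statement.
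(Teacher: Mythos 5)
Your argument is correct and is exactly the reasoning the paper uses: apply Theorem~\ref{thm:Birkhoff}, note that the $T$-invariant mean value $M_g$ must be constant $\mu$-a.e.\ by ergodicity via~\eqref{equ:ergodic-orbit}, and identify the constant as $\ave{g}$ through~\eqref{equ:expectation} since $\mu(\Omega)=1$. The paper records this as an immediate observation rather than a formal proof, so your write-up (including the remark about intersecting the full-measure sets) is just a slightly more explicit version of the same argument.
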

\paragraph{\textbf{Stationary random fields}}
Let $(\Omega, \F, \mu)$ be a probability space, and
let $G:\R^n \times \Omega \to \R$ be a random field. We say $G$ is
stationary if for any finite collection of points
$\vec{x}_i \in \R^n$, $i = 1, \ldots, k$ and any $\vec{h} \in \R^n$
the joint distribution of the random $k$-vector
$\big(G(\vec{x}_1 + \vec{h}, \omega), \ldots, G(\vec{x}_k + \vec{h}, \omega)\big)^T$ 
is the same as that of $\big(G(\vec{x}_1, \omega), \ldots, G(\vec{x}_k, \omega)\big)^T$.
It is straightforward to show that if $G$ can be written in the form 
\begin{equation} \label{equ:realization}
G(\vec{x}, \omega) = g\big(T_\vec{x}(\omega)\big),
\end{equation}
where $g:\Omega \to \Omega$ is a measurable function
and $T$ is a measure preserving dynamical system, 
then $G$ is stationary. For $G$ to be stationary and ergodic, we need the
dynamical system $T$ in \eqref{equ:realization} to be
ergodic. 

Note that when working with stationary and ergodic random functions, the Birkhoff Ergodic 
Theorem enables the type of averaging that is relevant in the context of 
homogenization. It is also interesting to  
recall the following Riemann-Lebesgue Lemma 
that plays a similar role as Birkhoff's theorem, in the problems 
of averaging of elliptic differential operators 
with \emph{periodic} coefficient functions 
(see~\cite[page 21]{Dacorogna08} for a more general 
statement of the Riemann-Lebesgue Lemma and its proof). 
\begin{lemma}
Let $Y = (a_1, b_1) \times (a_2, b_2) \times \cdots \times (a_n, b_n)$ be a rectangle in $\R^n$ 
and let $g \in L^2(Y)$. Extend $g$ by periodicity from
$Y$ to $\R^n$. For $\eps > 0$, let $g^\eps(\vec{x}) = g(\eps^{-1} \vec{x})$.
Then, as $\eps \to 0$,
$g^\eps \weak \bar{g}$ in $L^2(Y)$, where
$\bar{g} := \frac{1}{|Y|} \int_Y g(\vec{x}) \, d\vec{x}$.
\end{lemma}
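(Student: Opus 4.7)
The plan is to combine a periodic-averaging argument via change of variables with a density argument. Without loss of generality I would reduce to the case $Y = (0,1)^n$ by an affine change of variables, so that $\bar{g} = \int_Y g\,d\vec{x}$.

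The first step is uniform boundedness of $\{g^\eps\}$ in $L^2(Y)$. Changing variables $\vec{y} = \eps^{-1}\vec{x}$ gives
\[
\Norm{g^\eps}{L^2(Y)}^2 = \eps^n \int_{\eps^{-1}Y} |g(\vec{y})|^2 \, d\vec{y}.
\]
The set $\eps^{-1}Y$ is tiled by approximately $\eps^{-n}$ integer translates of $Y$ together with an $O(\eps^{-(n-1)})$-measure boundary layer of incomplete unit cells; by the $\Z^n$-periodicity of $g$ each complete cell contributes $\Norm{g}{L^2(Y)}^2$. Multiplying by $\eps^n$ then yields $\Norm{g^\eps}{L^2(Y)}^2 \to \Norm{g}{L^2(Y)}^2$, and in particular $\Norm{g^\eps}{L^2(Y)}$ is bounded uniformly in $\eps$.

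Next I would verify the limit against a dense class of test functions, namely indicators $\Char{R}$ of axis-aligned sub-rectangles $R \subset Y$. The same change of variables gives
\[
\int_R g(\eps^{-1}\vec{x})\,d\vec{x} = \eps^n \int_{\eps^{-1}R} g(\vec{y})\,d\vec{y}.
\]
Decomposing $\eps^{-1}R$ into its maximal union of complete integer translates of $Y$ plus a boundary strip of width $1$, periodicity shows that the complete cells contribute $N\bar{g}$ with $N = \eps^{-n}|R| + O(\eps^{-(n-1)})$, while the partial cells in the boundary strip contribute at most $O(\eps^{-(n-1)})\Norm{g}{L^1(Y)}$. Multiplication by $\eps^n$ yields
\[
\int_Y g^\eps \Char{R}\,d\vec{x} \;\longrightarrow\; \bar{g}\,|R| = \int_Y \bar{g}\,\Char{R}\,d\vec{x}.
\]
By linearity, the same convergence holds for every finite linear combination of such indicators.

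The final step is density. Step functions built from indicators of sub-rectangles of $Y$ are dense in $L^2(Y)$; combined with the uniform $L^2$ bound from the first step, a standard three-$\eps$ argument extends the convergence $\int_Y g^\eps \phi\,d\vec{x} \to \int_Y \bar{g}\,\phi\,d\vec{x}$ from the dense subset to every $\phi \in L^2(Y)$, which is exactly the assertion $g^\eps \weak \bar{g}$ in $L^2(Y)$. The main technical obstacle is the boundary correction in the averaging step, since a generic sub-rectangle $R$ need not align with the integer lattice; the key is the elementary combinatorial estimate that the number of $\Z^n$-translates of $Y$ meeting $\partial(\eps^{-1}R)$ is $O(\eps^{-(n-1)})$, so that after multiplication by the prefactor $\eps^n$ the boundary error vanishes linearly in $\eps$.
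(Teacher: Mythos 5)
Your proof is correct. Note that the paper itself does not prove this lemma: it is quoted as background, with the proof deferred to the cited reference (Dacorogna's book), so there is no internal argument to compare yours against. What you give is the standard self-contained proof: a uniform $L^2(Y)$ bound on $\{g^\eps\}$ by counting period cells, exact evaluation of $\int_Y g^\eps \Char{R}\,d\vec{x}$ for indicators of sub-rectangles up to a boundary layer of $O(\eps^{-(n-1)})$ incomplete cells (hence an $O(\eps)$ error after the $\eps^n$ prefactor), and a three-epsilon density argument using that step functions are dense in $L^2(Y)$. All steps are sound; the reduction to the unit cube is harmless, since under the diagonal affine change of variables the rescaled function is $\Z^n$-periodic, the mean $\bar g$ is preserved, and the fact that the image of $Y$ is only a translate of the unit cell is immaterial because integrals of a periodic function over any translate of a period cell coincide. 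An alternative, equally common route --- and the one typically behind textbook treatments --- is to verify the claim first for smooth periodic functions or trigonometric polynomials, where the oscillatory cancellation is immediate, and then conclude by the same uniform bound and density; your cell-counting argument reaches the same conclusion with only elementary measure-theoretic bookkeeping and no Fourier analysis, which fits well with the elementary spirit in which the lemma is used here.
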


\paragraph{\textbf{Solenoidal and potential vector fields and Weyl's decomposition Theorem}}
Let $(\Omega, \F, \mu)$ be a probability space. 
Here we briefly recall an important decomposition of the space $\L^2(\Omega) = L^2(\Omega; \R^n)$ 
of square integrable vector-fields on $\Omega$---the Weyl decomposition Theorem. This result will
be important in homogenization results for random elliptic operators in the general $n$-dimensional 
case.
Recall that a locally square integrable vector-field $\vec v$ on $\R^n$ is called potential if 
$\vec{v} = \grad \phi$ for 
some $\phi \in H^1_\text{loc}(\R^n)$, and is called solenoidal if it is divergence free. 
Letting 
$T$ be an $n$-dimensional measure-preserving dynamical system on $\Omega$, we consider 
the following spaces:
\begin{equation}\label{equ:Spaces}
\begin{aligned}
    \Lpot &= \!\{ \vec{f} \in \L^2(\Omega) : \vec{f}_T(\cdot, \omega) \text{ is
    potential on $\R^n$ for almost all $\omega\in\Omega$} \},\!\!
    \\
    \Lsol &= \!\{ \vec{f} \in \L^2(\Omega) : \vec{f}_T(\cdot, \omega) \text{ is
    solenoidal on $\R^n$ for almost all $\omega\in\Omega$} \},\!\!
    \\
    \Vpot &= \bigl\{ \vec{f} \in \Lpot : \ave{\vec{f}} = \vec{0} \bigr\},\\
    \Vsol &= \bigl\{ \vec{f} \in \Lsol : \ave{\vec{f}} = \vec{0} \bigr\}.
\end{aligned}
\end{equation}
The Weyl Decomposition Theorem  (see e.g.,~\cite[page 228]{Kozlov94}) states that 
the subspaces $\Vpot$ and $\Lsol$ of $\L^2(\Omega)$ 
are mutually orthogonal and complementary, \emph{given that $T$ is ergodic}.
\begin{theorem}[Weyl Decomposition] \label{th:weyl}
If the dynamical system $T$ is ergodic, then $\L^2(\Omega)$ admits 
the following orthogonal decompositions:
\begin{equation}
        \L^2(\Omega)
        = \Vpot \oplus \Lsol
        = \Vsol \oplus \Lpot.
\end{equation}
\end{theorem}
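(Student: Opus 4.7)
The plan is to establish the first decomposition $\L^2(\Omega) = \Vpot \oplus \Lsol$ in two steps: (i) show that $\Vpot$ and $\Lsol$ are mutually orthogonal in $\L^2(\Omega)$, and (ii) show that $\Vpot^\perp = \Lsol$, i.e., the two subspaces together span $\L^2(\Omega)$. The second decomposition $\L^2(\Omega) = \Vsol \oplus \Lpot$ then follows by an essentially symmetric argument with the roles of \emph{potential} and \emph{solenoidal} exchanged.

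For step (i), fix $\vec{f} \in \Vpot$ and $\vec{g} \in \Lsol$; the goal is $\ave{\vec{f}\cdot\vec{g}} = 0$. On one hand, Corollary~\ref{cor:Birkhoff} applied to $\vec{f}\cdot\vec{g} \in L^1(\Omega)$ (using ergodicity) gives, for almost every $\omega$,
\[
\frac{1}{|B_R|} \int_{B_R} \vec{f}_T(\vec{x},\omega) \cdot \vec{g}_T(\vec{x},\omega) \, d\vec{x} \;\longrightarrow\; \ave{\vec{f}\cdot\vec{g}} \quad \text{as } R \to \infty.
\]
On the other hand, choose $\phi(\cdot,\omega) \in H^1_\text{loc}(\R^n)$ with $\vec{f}_T(\cdot,\omega) = \grad\phi(\cdot,\omega)$ and a smooth cutoff $\eta_R$ equal to $1$ on $B_{R-1}$, supported in $B_R$, with $|\grad\eta_R| \leq C$. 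Integration by parts against $\vec{g}_T$, using $\div \vec{g}_T = 0$, yields
\[
\int \eta_R \, \grad\phi \cdot \vec{g}_T \, d\vec{x} = -\int \phi \, \grad\eta_R \cdot \vec{g}_T \, d\vec{x},
\]
whose right-hand side is supported on the shell $B_R\setminus B_{R-1}$ of volume $O(R^{n-1})$. The critical input is that $\ave{\vec{f}} = \vec{0}$ combined with Birkhoff's theorem forces $\phi$ to grow sublinearly in $|\vec{x}|$, so the shell integral is $o(R^n)$; dividing by $|B_R|$ and matching with the ergodic limit gives $\ave{\vec{f}\cdot\vec{g}} = 0$.

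For step (ii), I would first verify that $\Vpot$ is a closed subspace of $\L^2(\Omega)$: an $\L^2(\Omega)$-convergent sequence in $\Vpot$ has realizations converging in $L^2_\text{loc}(\R^n)$ along a subsequence (a.s.), and both the potential constraint and the mean-zero constraint pass to the limit. Closedness reduces the task to $\Vpot^\perp \subseteq \Lsol$. Given $\vec{h} \in \Vpot^\perp$, I would test $\vec{h}$ against a rich family of mean-zero potential fields whose realizations are gradients of arbitrary $\psi \in C_c^\infty(\R^n)$ (modulated by a suitable factor on $\Omega$); unraveling via Fubini and~\eqref{equ:realization-def} translates the orthogonality $\ave{\vec{h}\cdot\vec{f}} = 0$ into
\[
\int_{\R^n} \vec{h}_T(\vec{x},\omega) \cdot \grad\psi(\vec{x}) \, d\vec{x} = 0 \quad \text{for all } \psi \in C_c^\infty(\R^n) \text{ and a.e.\ } \omega,
\]
which is the distributional statement $\div \vec{h}_T(\cdot,\omega) = 0$. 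Thus $\vec{h} \in \Lsol$. Ergodicity enters through the characterization~\eqref{equ:ergodic-orbit}: without it, $\Vpot^\perp$ would additionally contain nontrivial $T$-invariant directions, whereas under ergodicity every such field collapses to a constant, and constants are automatically solenoidal.

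I expect the most delicate step to be the sublinear-growth control of $\phi$ in step (i), namely upgrading $\ave{\vec{f}} = \vec{0}$ to the quantitative bound $R^{-n}\int_{B_R\setminus B_{R-1}} |\phi|\,|\vec{g}_T|\,d\vec{x} \to 0$. A clean route is to rescale: set $\phi^\eps(\vec{x},\omega) = \eps\,\phi(\eps^{-1}\vec{x},\omega)$ so that $\grad\phi^\eps = \vec{f}_T^\eps$ and Birkhoff gives $\vec{f}_T^\eps \weak \vec{0}$ in $L^2_\text{loc}(\R^n)$; applying the Poincar\'e inequality on $B_1$ to a recentered $\phi^\eps$ yields decay of $\phi^\eps$ in $L^2_\text{loc}$, from which the shell bound follows by Cauchy--Schwarz.
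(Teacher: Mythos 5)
The paper itself offers no proof of Theorem~\ref{th:weyl} (it is quoted from~\cite{Kozlov94}), so your proposal has to stand on its own, and while its architecture (orthogonality plus identification of the orthogonal complement, using closedness of $\Vpot$) is the right one, step (ii) contains a genuine gap. The orthogonality you want to exploit is the inner product of $\L^2(\Omega)$, so the only admissible test objects are actual elements of $\Vpot$: functions of $\omega$ alone whose realizations $\vec{f}(T_\vec{x}(\omega))$ happen to be gradients. A ``mean-zero potential field whose realization equals $\nabla\psi$ for a prescribed $\psi \in C_c^\infty(\R^n)$'' is not such an object: a field like $c(\omega)\,\nabla\psi(\vec{x})$ depends on $\vec{x}$ and $\omega$ separately and is not an element of $\L^2(\Omega)$ at all, while a genuine stationary field with compactly supported realizations must vanish (apply Corollary~\ref{cor:Birkhoff} to $|\vec{f}|^2$: the averages of $|\vec{f}_T|^2$ over large balls converge a.e.\ to $\ave{|\vec{f}|^2}$, and compact support forces that limit to be zero). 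So the family you propose to test against is trivial, and the passage from $\ave{\vec{h}\cdot\vec{f}} = 0$ for all $\vec{f}\in\Vpot$ to $\int_{\R^n}\vec{h}_T\cdot\nabla\psi\,d\vec{x} = 0$ for all $\psi\in C_c^\infty(\R^n)$ is precisely the nontrivial content of the theorem, not an application of Fubini. The standard repair is to manufacture stationary test potentials by mollifying along the orbit: for $u\in L^2(\Omega)$ and $\rho\in C_c^\infty(\R^n)$ set $\psi_u(\omega) = \int_{\R^n}\rho(\vec{y})\,u\big(T_\vec{y}(\omega)\big)\,d\vec{y}$; its realizations are smooth, the field $\vec{f}_u(\omega) := \nabla_\vec{x}\,\psi_u\big(T_\vec{x}(\omega)\big)\big|_{\vec{x}=\vec{0}}$ lies in $\Vpot$ (mean zero by translation invariance of $\mu$), and testing $\vec{h}\in\Vpot^{\perp}$ against all such $\vec{f}_u$, then using measure preservation and Fubini over a countable dense family of $u$ and $\rho$, yields $\div\vec{h}_T(\cdot,\omega) = 0$ distributionally for a.e.\ $\omega$. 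Without a construction of this kind, the inclusion $\Vpot^{\perp}\subseteq\Lsol$ is unsupported.

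Step (i) also does not close exactly as written: with $|\nabla\eta_R|\le C$ on a unit-width shell, Cauchy--Schwarz plus the $L^2$-averaged sublinearity of $\phi$ only bounds the shell term, after dividing by $|B_R|$, by a quantity of order $o(R)$, not $o(1)$, and Birkhoff gives no control of $\int|\vec{g}_T|^2$ over thin shells. The fix is the one your last paragraph is groping toward: take a cutoff with $|\nabla\eta_R|\le C/R$ over an annulus of width comparable to $R$ (equivalently, a fixed $\psi\in C_c^\infty$ after rescaling), note that subtracting the recentering constant costs nothing because $\int\nabla\eta_R\cdot\vec{g}_T\,d\vec{x} = 0$ by solenoidality, and compare with the Birkhoff limit of $\int\psi\,\vec{f}_T^\eps\cdot\vec{g}_T^\eps\,d\vec{x}$. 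Alternatively, you can avoid cutoffs entirely with the paper's own tools: Lemma~\ref{lem:compensated} applied to $\vec{p}^\eps=\vec{g}_T^\eps$ and $\vec{v}^\eps=\vec{f}_T^\eps$ gives $\vec{f}_T^\eps\cdot\vec{g}_T^\eps \weaks \ave{\vec{f}}\cdot\ave{\vec{g}} = 0$, while Corollary~\ref{cor:Birkhoff} applied to the scalar $\vec{f}\cdot\vec{g}\in L^1(\Omega)$ gives $\vec{f}_T^\eps\cdot\vec{g}_T^\eps \weaks \ave{\vec{f}\cdot\vec{g}}$; uniqueness of weak-$\star$ limits then yields $\ave{\vec{f}\cdot\vec{g}}=0$. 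Your closedness argument for $\Vpot$ and your account of where ergodicity enters are fine.
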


\section{Mathematical definition of homogenization}\label{sec:homog-def}
As before, we let $(\Omega, \F, \mu)$ be a probability space.
The conductivity function of a medium with random microstructure is specified by 
a random function $A(\vec{x}, \omega)$, where for each $\omega \in \Omega$, $A(\cdot, \omega)$ is
a matrix valued function $A(\cdot, \omega):\R^n \to \R^{n\times n}_\text{sym}$. Here 
$\R^{n\times n}_\text{sym}$ denotes the space of symmetric $n \times n$ matrices with real entries.
Let the physical domain be given by a bounded open set
$\O \subset \R^n$ (with $n = 1, 2$, or $3$). Assume for simplicity that the
temperature $u$ is fixed at zero on the boundary of $\O$. 
The PDE governing heat conduction in the medium with microstructure is given by
\begin{equation}  \label{equ:random-problem}
\begin{cases}
-\div_\vec{x}(A(\eps^{-1} \vec{x}, \omega) \grad{u^\eps(\vec{x}, \omega)})=f(\vec{x}) 
  & \mbox{ in }  \O, \\ 
  u^\eps(\vec{x}, \omega) = 0 & \mbox{ on } \partial\O, 
\end{cases}
\end{equation}
where $f \in H^{-1}(\O)$ specifies a (deterministic)
source term. 
The goal of homogenization theory is to specify a problem of 
the form
\begin{equation} \label{equ:hmg-problem}
\left\{ \begin{array}{rcl}
-\div_\vec{x}(A^0 \grad{u^0})=f & 
  \mbox{ in } & \O, \\ 
  u^0 = 0 & \mbox{ on } & \partial\O. 
\end{array}\right. 
\end{equation}
where $A^0$ in~\eqref{equ:hmg-problem} is a constant 
matrix such that the solution $u^0$ of~\eqref{equ:hmg-problem}
provides a reasonable approximation (for almost all $\omega$)
to the solution of~\eqref{equ:random-problem} in the limit as $\eps \to 0$. 
The following definition makes the notion of homogenization precise for a single
deterministic conductivity function.
\begin{definition}\label{def:homog}
Consider a matrix valued function, $A:\R^n \to \R^{n\times n}_\text{sym}$, 
and suppose there exist real numbers $0 < \nu_1 < \nu_2$
such that for each 
$\vec{x} \in \R^n$, 
\[
\nu_1 |\vec{\xi}|^2 \leq \vec{\xi} \cdot A(\vec{x}) \vec{\xi} \leq \nu_2 |\vec{\xi}|^2, \mbox{ for all } \vec{\xi} \in \R^n.
\]
That is, $A$ is uniformly bounded and positive definite. 
For $\eps > 0$, denote $A^\eps(\vec{x}) = A(\eps^{-1}\vec{x})$.
Then, we say that $A$ \emph{admits homogenization} if
there exists a constant symmetric positive definite
matrix $A^0$ such that
for any bounded domain $\O \subset \R^n$ and any
$f \in H^{-1}(\O)$,
the solutions $u^\eps$ of the
problems
\begin{equation} \label{equ:epsproblem}
\left\{ \begin{array}{rcl}
-\div(A^\eps \grad{u^\eps})=f & \mbox{ in } & \O, \\ 
  u^\eps = 0 & \mbox{ on } & \partial\O,
\end{array}\right.
\end{equation}
satisfy the following convergence properties:
\begin{equation*}
   u^\eps \weak u^0 \quad \mbox{ in } H^1_0(\O), \qquad \mbox{ and } \qquad
   A^\eps \grad{u^\eps} \weak A^0 \grad{u^0} \quad \mbox{ in } \L^2(\O),
\end{equation*}
as $\eps \to 0$, where  $u^0$ satisfies the problem
\begin{equation} \label{equ:homog_prob}
\left\{ \begin{array}{rcl}
-\div(A^0\grad{u^0})=f & \mbox{ in } & \O, \\ 
  u^0 = 0 & \mbox{ on } & \partial\O.
\end{array}\right.
\end{equation}
\end{definition}
\noindent
\begin{remark}
In practice, it is sufficient to verify the convergence relations in the above definition for 
right-hand side functions $f \in L^2(\O)$; see also the discussion in~\cite[Remark 1.5]{Kozlov94}.
\end{remark}
\begin{remark}
A family of operators $\{A^\eps\}_{\eps > 0}$ satisfying the above definition are said to 
\emph{G-converge} to $A^0$. The uniqueness of the homogenized matrix $A^0$
is also guaranteed by the uniqueness of G-limits; see e.g.~\cite[page 150]{Kozlov94} or~\cite[page 229]{Hornung1997} 
for basic properties of G-convergence. 
\end{remark}
Note that Definition~\ref{def:homog} concerns the homogenization of a single
conductivity function $A(\vec{x})$. In the case where $A$ is a periodic function,
i.e., the case of periodic media,
the existence of the homogenized matrix is well-known~\cite{
Papa78,
SanchezPalencia,
Olenik92,
CioranescuDonatoBook}.
In the random case~\cite{Kozlov79,
Papa79,
yurinskii1986averaging,
sab-92,
Kozlov94,
Papa95,
blanc2007stochastic}, 
where we work with a random conductivity function $A = A(\vec{x}, \omega)$, 
we say $A$ admits homogenization 
if for almost all $\omega \in \Omega$, $A(\cdot, \omega)$ 
admits homogenization $A^0$ (with $A^0$ a constant matrix independent of 
$\omega$) in the sense of Definition~\ref{def:homog}. 

\section{Stochastic homogenization: the one-dimensional case}
\label{sec:1dhomog}
In this section, we discuss the homogenization of an 
elliptic boundary value problem, in one space dimension, with a random coefficient
function. As we shall see shortly, under
assumptions of stationarity and ergodicity there is
a closed-form expression for the (deterministic) homogenized coefficient. 
Let $(\Omega, \F, \mu)$ be a probability space and 
let $T = \{T_{x}\}_{{x} \in \R}$ be a 1-dimensional 
measure preserving and ergodic dynamical system.
Let $a: \Omega \to \R$ be a measurable function, and suppose  
there exist positive constants $\nu_1$ and $\nu_2$ such that 
\begin{equation} \label{equ:abdd}
      \nu_1 \leq a(\omega) \leq \nu_2, \quad
         \mbox{for almost all  $\omega \in \Omega$}.
\end{equation}
For $\omega \in \Omega$, we consider the following problem,
\begin{equation} \label{equ:basic-prob-1D}
   \begin{aligned} 
      -\ddx \left(a_{\smallT}(\cdot, \omega) \dudx(\cdot, \omega) \right) = f \quad &\mbox{ in }  
      \O = (s, t),  \\
      u(\cdot, \omega) = 0 \quad &\mbox{ on } \partial\O = \{s, t\}.
   \end{aligned}
\end{equation}
Here $\O = (s, t)$ is an open interval, $f \in L^2(\O)$ is a deterministic source term
and $a_{\smallT}(x, \omega) = a\big(T_x(\omega)\big)$ denotes realizations of $a$ with respect to $T$.
Note that by construction, $a_{\smallT}(x, \omega)$ is a stationary and ergodic random field.
\begin{theorem}\label{thm:homog1D}
For almost all $\omega \in \Omega$, $a_T(x, \omega)$ defined above admits homogenization and  
\begin{equation} \label{equ:homog-coeff}
\abar = \frac{1}{\ave{{1}/{a}}}
\end{equation}
is the corresponding homogenized coefficient.
\end{theorem}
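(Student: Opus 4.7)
The plan is to exploit the one-dimensional structure: in 1D the flux $p^\eps := a_{\smallT}^\eps\,du^\eps/dx$ is automatically strongly compact in $L^2$ thanks to the elementary conservation law $(p^\eps)' = -f$, which lets us bypass the Div--Curl machinery of Lemma~\ref{lem:compensated} and instead pass to the limit directly in the identity $du^\eps/dx = (1/a_{\smallT}^\eps)\,p^\eps$ via a weak--strong product, combined with Birkhoff's theorem applied to $1/a$.

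First I would derive the standard a priori estimate. By \eqref{equ:abdd} the bilinear form $(u,v)\mapsto \int_{\O} a_{\smallT}^\eps\,u'v'\,dx$ is uniformly coercive on $H^1_0(\O)$, so Lax--Milgram produces unique weak solutions $u^\eps(\cdot,\omega)\in H^1_0(\O)$ with $\|u^\eps\|_{H^1_0(\O)}\le C\|f\|_{L^2(\O)}$ uniformly in $\eps$ and $\omega$. Rellich's compactness then yields a subsequence $u^{\eps_k}\weak u^0$ in $H^1_0(\O)$ with $u^{\eps_k}\to u^0$ strongly in $L^2(\O)$.

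Next I would establish compactness of the flux. Since $\|p^\eps\|_{L^2(\O)}\le \nu_2\|u^\eps\|_{H^1_0}$ is uniformly bounded and $(p^\eps)' = -f\in L^2(\O)$, the sequence $\{p^\eps\}$ is bounded in $H^1(\O)$; by Rellich a further subsequence converges strongly in $L^2(\O)$ to some $p^0$, and the limit satisfies $-(p^0)' = f$ in $\O$. Simultaneously, Corollary~\ref{cor:Birkhoff} applied to $1/a\in L^\infty(\Omega)$ provides a single null set $N\subset \Omega$ such that for every $\omega\notin N$,
\[
   \frac{1}{a_{\smallT}^\eps(x,\omega)} \weak \ave{1/a} = \frac{1}{\abar} \quad \text{in } L^2_{\mathrm{loc}}(\R).
\]
Fix any such $\omega$; because the null set depends only on $a$ and $T$, the same $\Omega\setminus N$ will serve for every bounded interval $\O$ and every right-hand side $f\in L^2(\O)$ simultaneously.

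Now I would pass to the limit in $du^\eps/dx = (1/a_{\smallT}^\eps)\,p^\eps$. The factor $1/a_{\smallT}^\eps$ is uniformly bounded in $L^\infty$ and converges weakly in $L^2_{\mathrm{loc}}(\R)$ to $1/\abar$, while $p^\eps\to p^0$ strongly in $L^2(\O)$; hence the product converges weakly in $L^2(\O)$ to $p^0/\abar$. Since also $du^\eps/dx \weak du^0/dx$ in $L^2(\O)$, uniqueness of weak limits forces $p^0 = \abar\,du^0/dx$. Substituting into $-(p^0)'=f$ identifies $u^0$ as the unique solution of $-(\abar\,(u^0)')'=f$ with $u^0\in H^1_0(\O)$, i.e., of problem \eqref{equ:homog_prob} with $A^0=\abar$. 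Uniqueness of this limit problem upgrades the subsequential convergence to full-sequence convergence, and the flux convergence $a_{\smallT}^\eps\,du^\eps/dx \weak \abar\,du^0/dx$ in $L^2(\O)$ (in fact strongly) is immediate from the flux compactness step. The one delicate point---where some care is needed---is the weak--strong product passage; it works precisely because the 1D conservation law promotes $p^\eps$ to strong $L^2$ compactness, a feature lost in higher dimensions and the reason compensated compactness is required in Section~\ref{sec:nDhomog}.
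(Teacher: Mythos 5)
Your proposal is correct and follows essentially the same route as the paper's proof: a priori bounds giving $u^\eps \weak u^0$ in $H^1_0(\O)$, the observation that the flux $\sigma^\eps = a_{\smallT}^\eps\,du^\eps/dx$ is bounded in $H^1(\O)$ (since its derivative is $-f$) and hence strongly compact in $L^2(\O)$, Birkhoff applied to $1/a$, and a weak--strong passage to the limit in $du^\eps/dx = \sigma^\eps/a_{\smallT}^\eps$, followed by the uniqueness argument that upgrades subsequential to full-sequence convergence. No gaps; even your closing remark about why compensated compactness is unnecessary in 1D mirrors the paper's structure.
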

\begin{proof}
Since the dynamical system is ergodic, by the Birkhoff Ergodic Theorem, we know
that there is a set
$E \in \F$, with $\mu(E) = 1$ such that for all $\omega \in E$,
\begin{equation} \label{equ:a-conv}
   \frac{1}{a_{\smallT}^\eps(\cdot, \omega)} \weak \ave{\frac{1}{a}} 
   := \frac{1}{\abar}
   \quad
   \mbox{ in }
   L^2(\O),   
\end{equation}
as $\eps \to 0$. Let $\omega \in E$ be fixed but arbitrary and for $\eps > 0$ 
consider the problem
\begin{equation} \label{equ:basic-prob-eps}
   \begin{aligned}  
      -\ddx \left(a_{\smallT}^\eps(\cdot, \omega) \dudxeps(\cdot, \omega) \right) = f
      \quad &\mbox{ in } \O = (s,t),   \\
      u^\eps(\cdot, \omega) = 0 \quad & \mbox{ on } \partial\O = \{s, t\}, 
   \end{aligned}
\end{equation}
with the weak formulation given by,
\newcommand{\dphidx}{\frac{d\phi}{dx}}
\begin{equation} \label{equ:weak}
\int_\O a^\eps_{\smallT}(\cdot, \omega) \dudxeps \dphidx \, dx = \int_\O f \phi\,dx,
\quad \forall \phi \in H^1_0(\O).
\end{equation}

We know that for each $\eps>0$,~\eqref{equ:weak} has a 
unique solution $u^\eps = u^\eps(\cdot, \omega)$. 
First we show that that $\{u^\eps(\cdot, \omega)\}_{\eps > 0}$ is bounded in 
$H^1_0(\O)$ norm. 
To see this, we begin by letting
$\phi = u^\eps$ in \eqref{equ:weak} and note that
\begin{multline*}
   \nu_1 \int_\O \left|\dudxeps\right|^2 \, dx 
   \leq 
   \int_\O a_T^\eps \dudxeps \dudxeps \, dx
   = 
   \int_\O f u^\eps \, dx \\
   \leq 
   \Norm{f}{L^2(\O)}\Norm{u^\eps}{L^2(\O)}
   \leq 
   C_p \Norm{f}{L^2(\O)} \Norm{\dudxeps}{L^2(\O)}, 
\end{multline*}
where the last two inequalities use Cauchy-Schwarz and Poincar\'{e} inequalities respectively.
Thus, 
\begin{equation}\label{equ:derivbdd}
\Norm{\dudxeps}{L^2(\O)} \leq \frac{C_p}{\nu_1} \Norm{f}{L^2(\O)}.
\end{equation}
Moreover, applying Poincar\'{e} 
inequality again, we have 
$\Norm{u^\eps}{L^2(\O)} \leq C_p \Norm{\dudxeps}{L^2(\O)}$ and therefore,
the sequence $\{u^\eps\}$ is bounded in $L^2(\O)$ as well. Thus, we conclude that
$\{u^\eps(\cdot, \omega)\}_{\eps>0}$ 
is bounded in $H^1_0(\O)$. Consequently, we have as $\eps \to 0$,
along a subsequence (not relabeled),
\begin{equation} \label{equ:u-weak}
u^\eps(\cdot, \omega) \weak \ubar \quad \mbox{ in } H^1_0(\O). 
\end{equation} 
Moreover, by compact embedding of $H^1_0(\O)$ into $L^2(\O)$ we have that
$u^\eps(\cdot, \omega) \to \ubar$ strongly in $L^2(\O)$.
Note that at this point it is not clear whether $\ubar$ is independent of $\omega$. 
From \eqref{equ:u-weak} we immediately get that,
\begin{equation} \label{equ:ux-weak}
   \dudxeps(\cdot, \omega) \weak \frac{d\ubar}{dx} \quad \mbox{ in } L^2(\O).
\end{equation}
Next, we let 
\begin{equation}\label{equ:sigma}
\sigma^\eps(x, \omega) = a_{\smallT}^\eps(x, \omega)\dudxeps(x, \omega).
\end{equation}
\newcommand{\dsdxeps}{\frac{d\sigma^\eps}{dx}}

Using the fact that $\{a_{\smallT}^\eps(\cdot, \omega)\}_{\eps > 0}$ 
is bounded in $L^\infty(\O)$ and~\eqref{equ:derivbdd}
we have $\{\sigma^\eps(\cdot, \omega)\}_{\eps > 0}$ is bounded in $L^2(\O)$. 
Moreover, we note that $\displaystyle \dsdxeps = -f$ and 
therefore, $\left\{\dsdxeps(\cdot, \omega)\right\}_{\eps > 0}$ is 
bounded in $L^2(\O)$ as well. 
Therefore, we conclude that $\{\sigma^\eps(\cdot,\omega)\}_{\eps > 0}$ is bounded in $H^1(\O)$.
Thus, $\sigma^\eps(\cdot, \omega) \weak \sbar(\cdot, \omega)$ in $H^1(\O)$ (along a subsequence), and
therefore, by compact embedding of $H^1(\O)$ into $L^2(\O)$ 
we have as $\eps \to 0$, 
\begin{equation} \label{equ:sigma-strong}
   \sigma^\eps(\cdot, \omega) \to \sbar(\cdot,\omega) \quad \mbox{ in } L^2(\O).
\end{equation}
Next, consider the following obvious equality, 
\begin{equation} \label{equ:obv1}
   \dudxeps(\cdot, \omega) = 
      \frac{a_{\smallT}^\eps(\cdot, \omega)}{a_{\smallT}^\eps(\cdot, \omega)}
      \dudxeps(\cdot, \omega) 
      = \sigma^\eps(\cdot, \omega) \frac{1}{a_{\smallT}^\eps(\cdot, \omega)}.
\end{equation}
\noindent
In view of~\eqref{equ:ux-weak} and using~\eqref{equ:a-conv} and \eqref{equ:sigma-strong}
we have as $\eps \to 0$.
\begin{equation*} 
   \sigma^\eps(\cdot, \omega) \frac{1}{a_{\smallT}^\eps(\cdot, \omega)}
   \weak 
   \sbar(\cdot,\omega) \frac{1}{\abar}
   \quad
   \mbox{ in } L^2(\O), \qquad \mbox{ and } \qquad \frac{d\ubar}{dx} = \sbar(\cdot,\omega) \frac{1}{\abar}.
\end{equation*}
Thus, we have 
\[
\sbar(\cdot,\omega) = \abar \frac{d\ubar}{dx},
\] 
and,
recalling the definition of $\sigma^\eps$ in~\eqref{equ:sigma}, we can rewrite~\eqref{equ:sigma-strong}
as follows:
\begin{equation}\label{equ:fluxconv}
 a_{\smallT}^\eps(x, \omega)\dudxeps(x, \omega) \to  \abar \frac{d\ubar}{dx}, \quad \mbox{ in } L^2(\O).
\end{equation}
Hence, passing to the limit as $\eps \to 0$ in~\eqref{equ:weak} gives, 
\[
\int_\O  \abar \frac{d\ubar}{dx} \dphidx \,dx = \int_\O f \phi\,dx,
\quad \forall \phi \in H^1_0(\O),
\]
which says that $\ubar$ is the weak solution to
\begin{equation} \label{equ:basic-prob-homog}
 \begin{aligned}
      -\ddx \left(\abar \frac{d\ubar}{dx} \right) = f \quad
      &\mbox{ in }  
      \O = (s,t),   \\
      \ubar = 0 \quad &\mbox{ on } \partial\O = \{s, t\}. 
 \end{aligned} 
\end{equation}
Note also that by~\eqref{equ:abdd} we have  
that $\nu_1 \leq \abar \leq \nu_2$. 
The problem~\eqref{equ:basic-prob-homog} has a 
unique solution $\ubar$ that is independent of $\omega$,  because $\abar$ is 
a constant independent of $\omega$ and the right-hand side function $f$
is deterministic. Also, since the solution $\ubar$ is unique,  
any subsequence of $u^\eps(\cdot, \omega)$ converges to the same limit $\ubar$
(weakly in $H^1_0(\O)$ and thus strongly in $L^2(\O)$) and thus the entire sequence $\{u^\eps(\cdot, \omega)\}_{\eps > 0}$ 
converges to $\ubar$, not just such a subsequence.
Finally, since the domain $\O$ was any arbitrary open interval and 
the right-hand side function $f \in L^2(\O)$ was arbitrary,~\eqref{equ:u-weak},~\eqref{equ:fluxconv} 
and~\eqref{equ:basic-prob-homog} lead to the conclusion 
that $a_{\smallT}^\eps(\cdot, \omega)$ admits homogenization with homogenized coefficient given 
by $\abar = \ave{1/a}^{-1}$. Note also that this conclusion holds for almost all $\omega \in \Omega$.
\end{proof}
\begin{remark}
Note that Theorem~\ref{thm:homog1D} says the effective coefficient $\abar$ is a 
constant function on $\O$ with $\abar(x) = \ave{{1}/{a}}^{-1}$ for all $x \in \O$.
Also, observe that $\abar$ is the one-dimensional counterpart 
of the homogenized coefficient $A^0$ in~\eqref{equ:homog_prob}.
\end{remark}

\section{Stochastic homogenization: the $n$-dimensional case}
\label{sec:nDhomog}
Before delving into the theory, we consider a numerical illustration of
homogenization in a two-dimensional example.
We consider,
\begin{equation} \label{equ:test} 
\left\{ \begin{array}{rcl}
-\div(A(\eps^{-1}\vec{x}, \omega) \grad{u^\eps(\vec{x}, \omega)})=f(\vec{x}) &
  \mbox{ in } & \O = (0,1)\times(0,1), \\
  u^\eps(\vec{x}, \omega) = 0 & \mbox{ on } & \partial\O, 
\end{array}\right.
\end{equation}
where the source term is given by,
\[
   f(\vec{x}) = \frac{C}{2\pi L} \exp\left\{ -\frac{1}{2L}\big[ (x_1 - 1/2)^2 + (x_2 - 1/2)^2\big] \right\}, 
\quad \mbox{ with } C = 5, \mbox{and } L = .05.
\] 
We describe the diffusive properties of the medium, modeled by the conductivity function $A(\vec{x}, \omega)$, 
by a random tile based structure
similar to the one-dimensional example presented in the beginning 
of the article.  Consider a checkerboard like structure where the conductivity of each 
tile is a random variable that can take four possible values $\kappa_1, \ldots, \kappa_4$, with 
probabilities
$p_i \in (0,1)$, $\sum_{i = 1}^4 p_i = 1$.
For the present example, we let $\kappa_1 = 1$, $\kappa_2 = 10$, $\kappa_3 = 50$, and $\kappa_4 = 100$, 
which can occur with probabilities $p_1 = 0.4$, and $p_2 = p_3 = p_4 = 0.2$, 
respectively.  We depict a realization of the resulting (scalar-valued) random conductivity function $A(\vec{x}, \omega)$ in 
Figure~\ref{fig:sample1} (left) and the solution $u(\vec{x},\omega)$ of the corresponding diffusion problem~\eqref{equ:test} in 
the right image of the same figure.
Note that in the plot of the random checkerboard, lighter colors correspond to tiles with larger conductivities.
\begin{figure}[ht]\centering
\includegraphics[width=.25\textwidth]{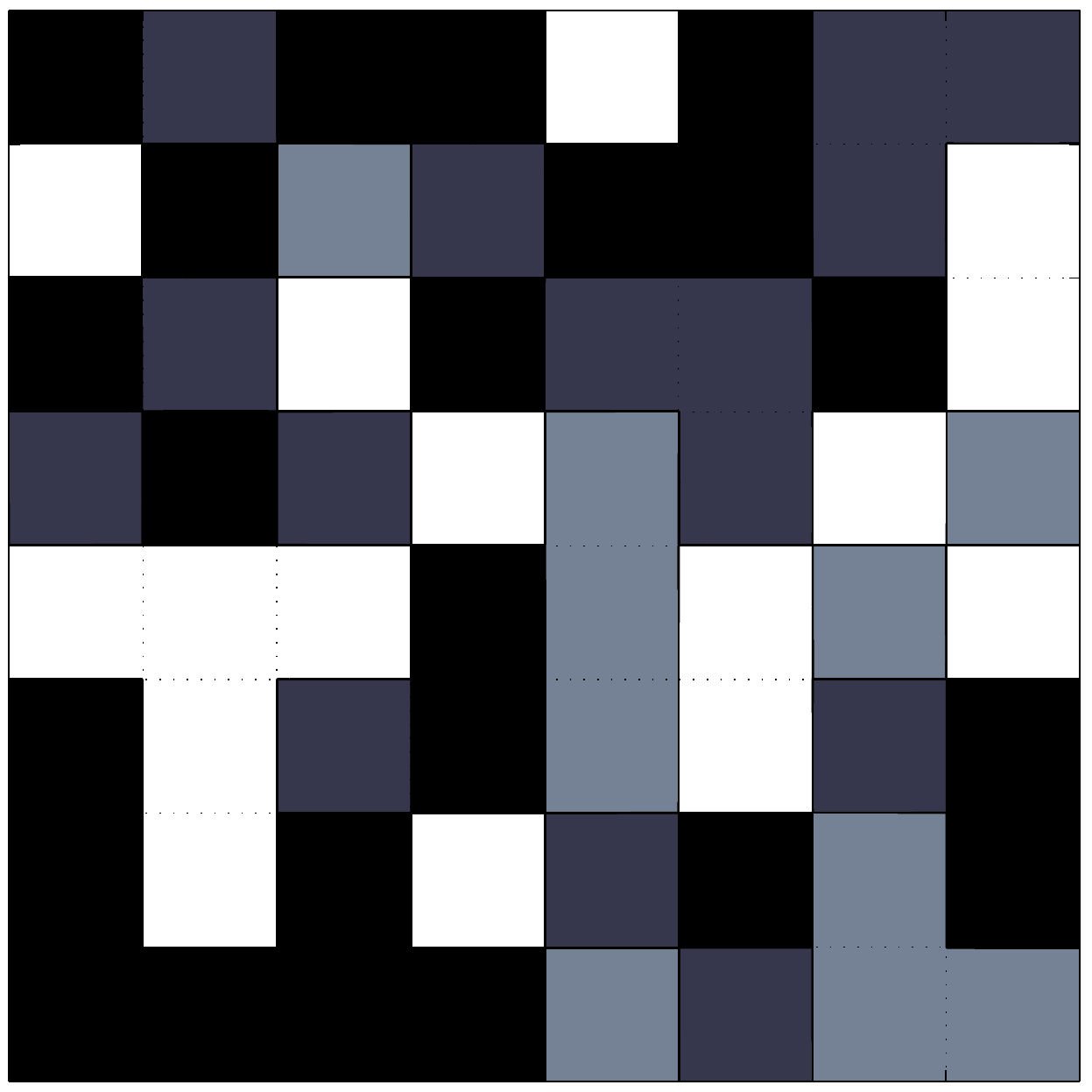}
\includegraphics[width=.252\textwidth]{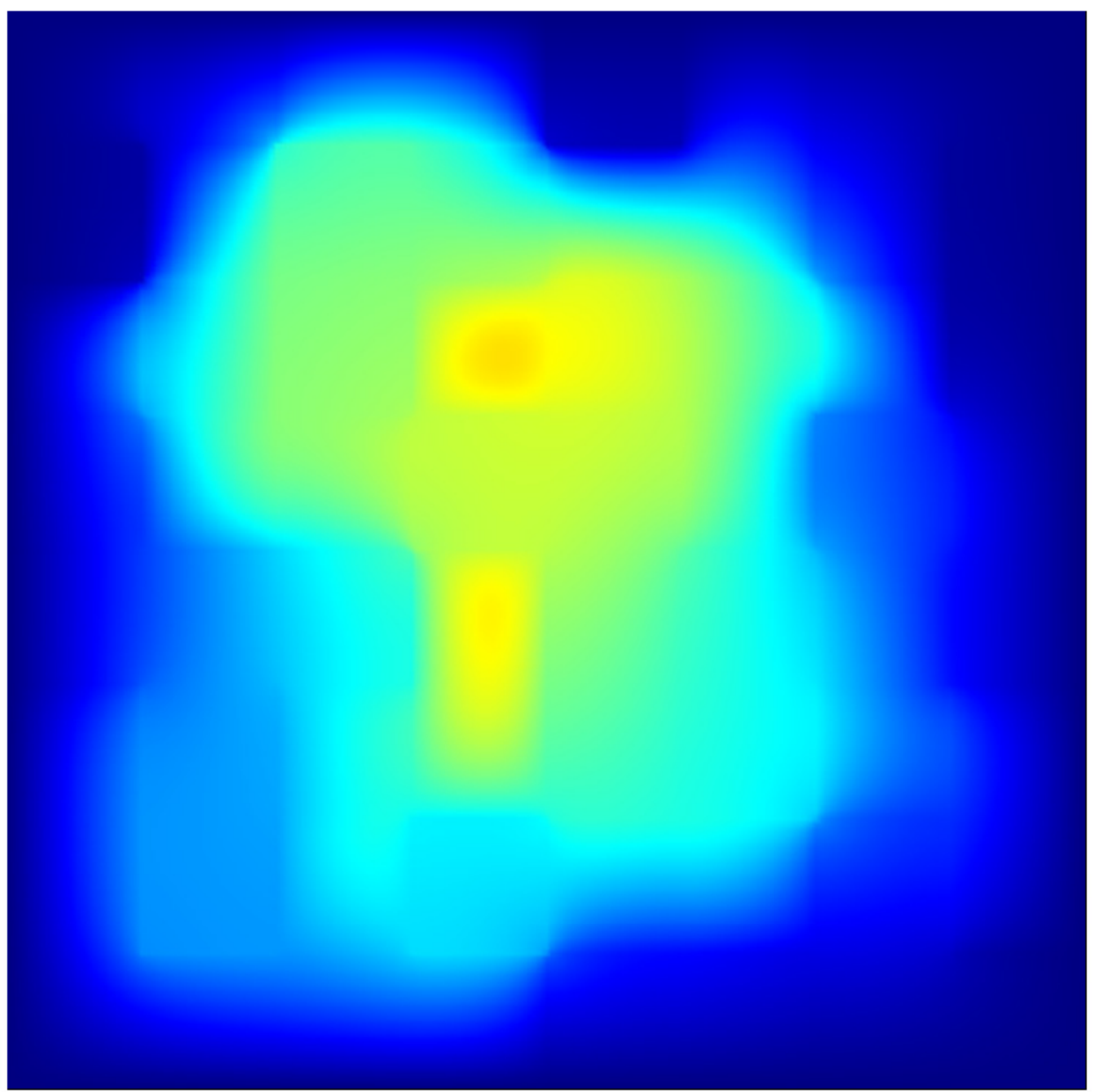}
\includegraphics[width=.053\textwidth]{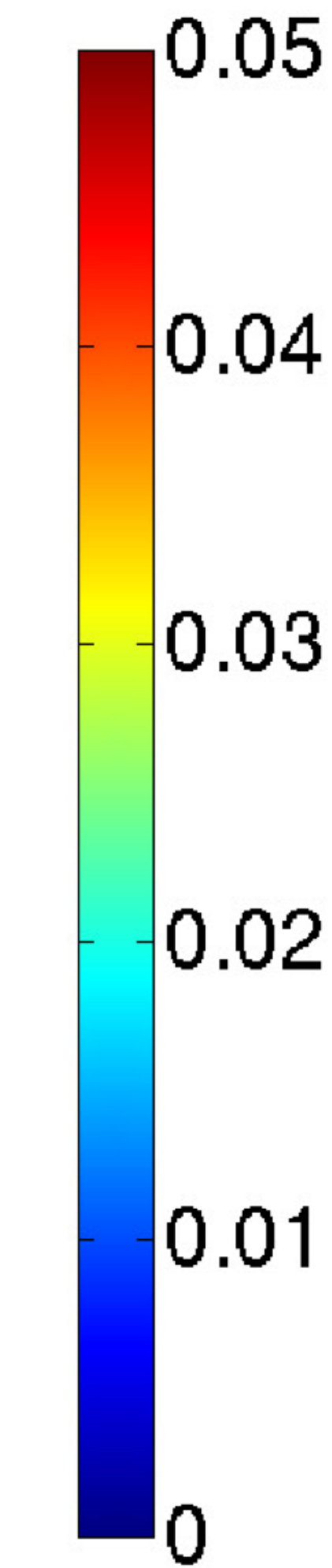}
\caption{Left: a realization of the random checkerboard conductivity function described above; right: the
solution $u(\vec{x},\omega)$ corresponding to the realization of $A(\vec{x}, \omega)$.}
\label{fig:sample1}
\end{figure}
For a numerical illustration of homogenization, we compute the solutions of problem~\eqref{equ:test} with successively 
smaller values of $\eps$. Specifically, using the same realization of the medium shown in Figure~\ref{fig:sample1} (left), 
we solve the problem~\eqref{equ:test} with $\eps = 1/2, 1/4$, and $1/8$.
Results are reported in Figure~\ref{fig:homog2d}, where we plot 
the coefficient fields $A(\eps^{-1}\vec{x}, \omega)$ (top row) 
and the corresponding solutions $u^\eps(\vec{x}, \omega)$ (bottom row).
Note that as $\eps$ gets smaller the solutions $u^\eps$ seem to approach that of a diffusion problem
with a constant diffusion coefficient. This is the expected outcome when working with structures that 
admit homogenization.
We mention that these problems were solved numerically using a continuous Galerkin finite-element
discretization with a $200 \times 200$ mesh of quadratic quadrilateral elements. COMSOL Multiphysics
was used for the finite-element discretization and computations were performed in Matlab.

\begin{figure}[ht]\centering
\includegraphics[width=.25\textwidth]{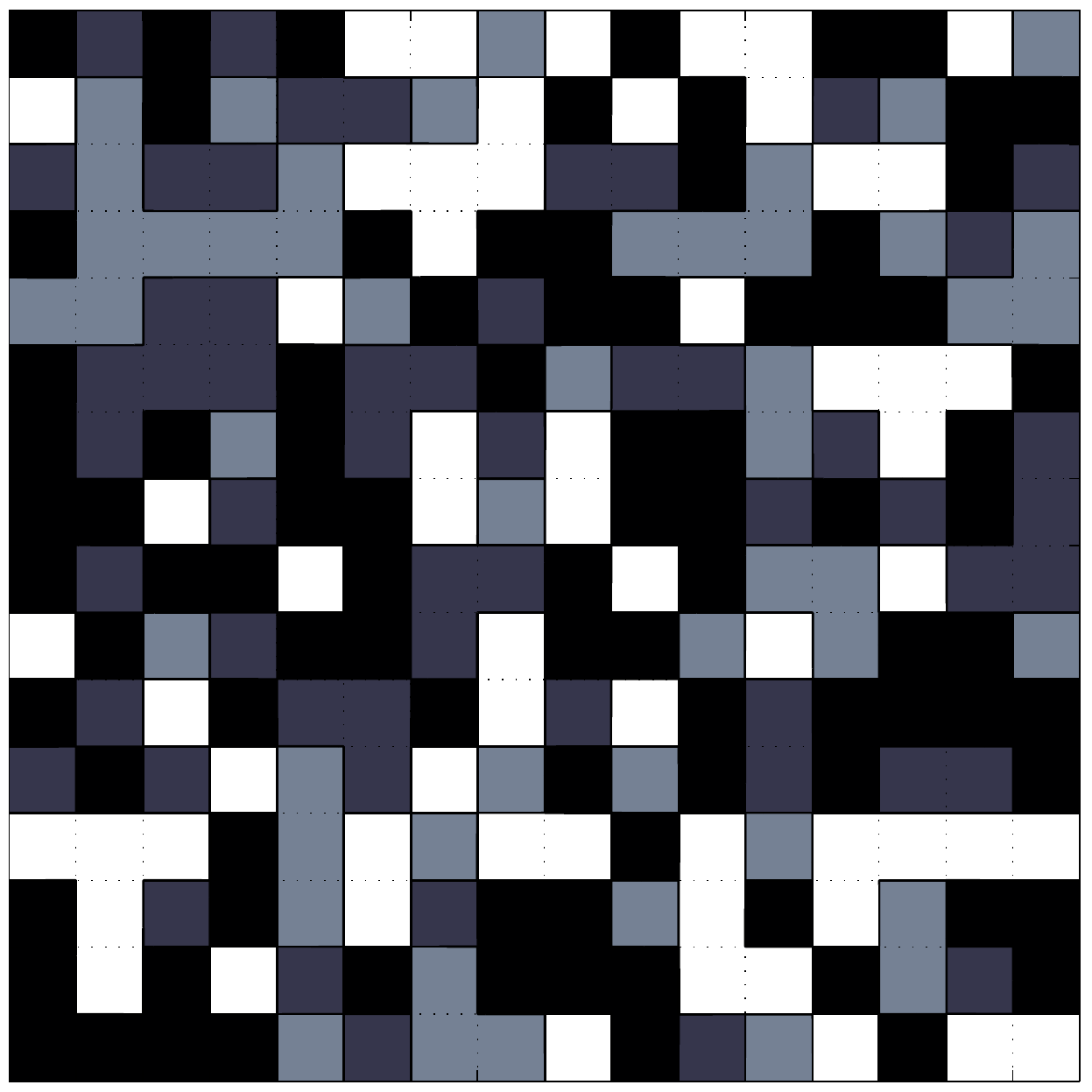}
\includegraphics[width=.25\textwidth]{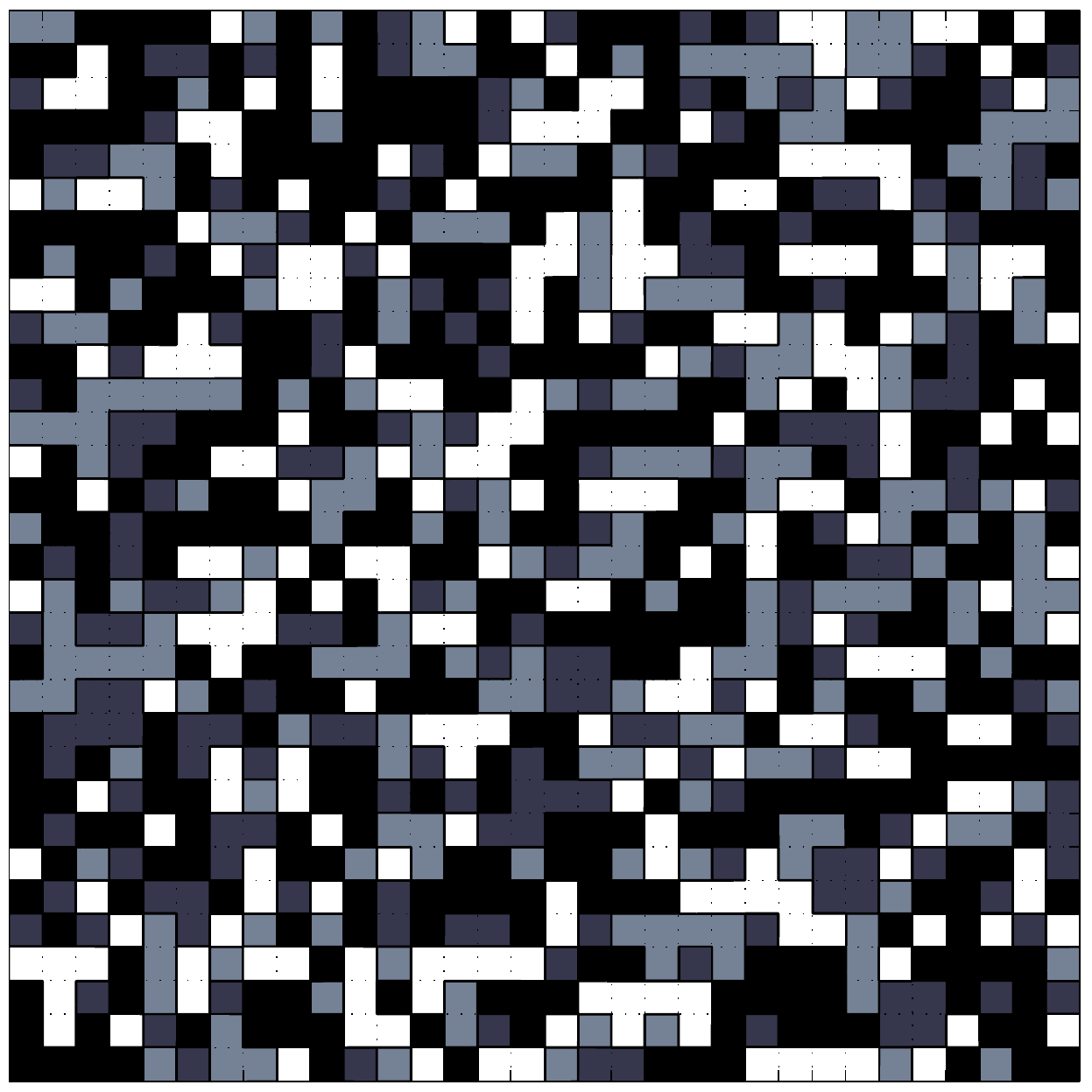}
\includegraphics[width=.25\textwidth]{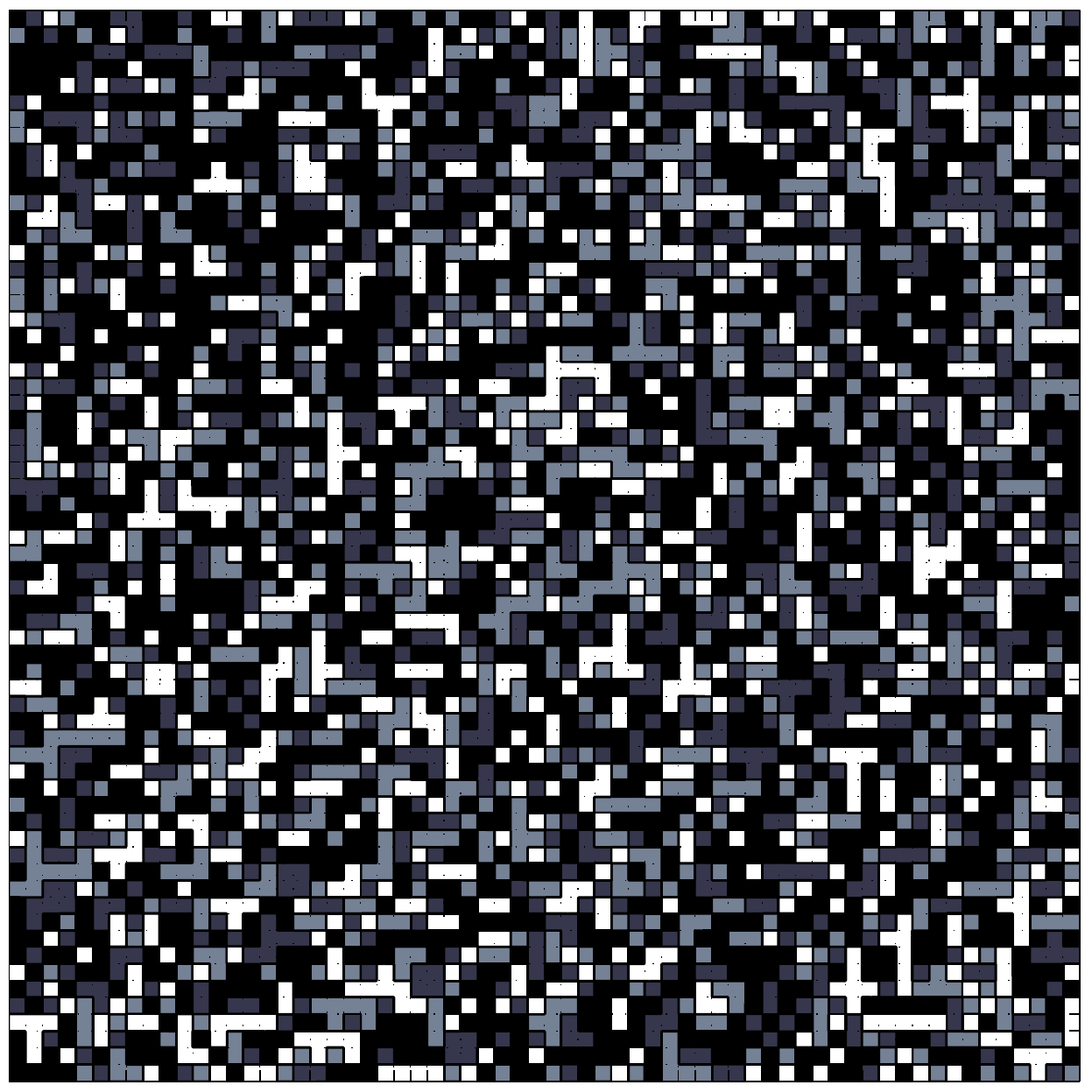}\\
\hspace{6mm}
\includegraphics[width=.25\textwidth]{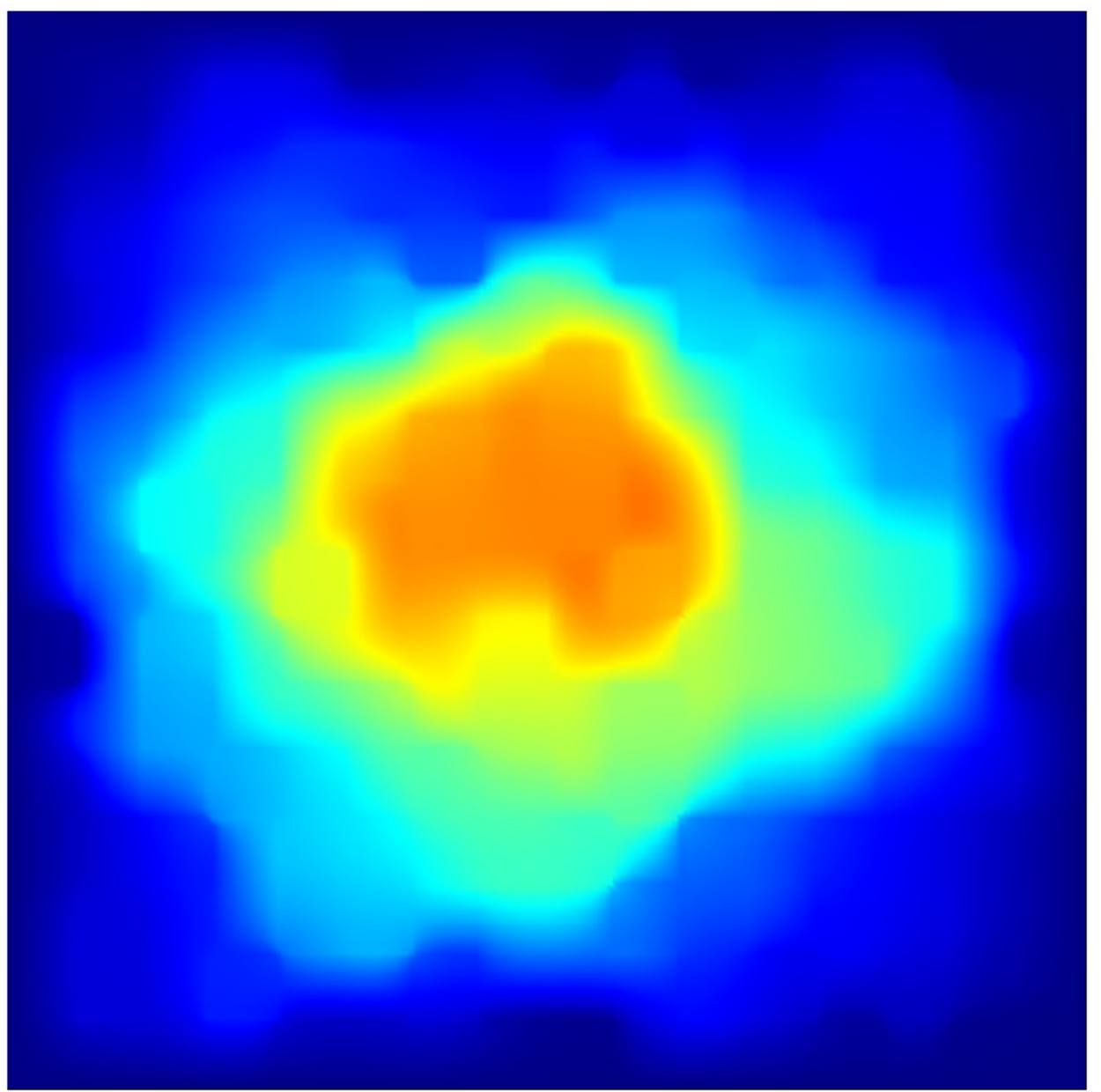}
\includegraphics[width=.25\textwidth]{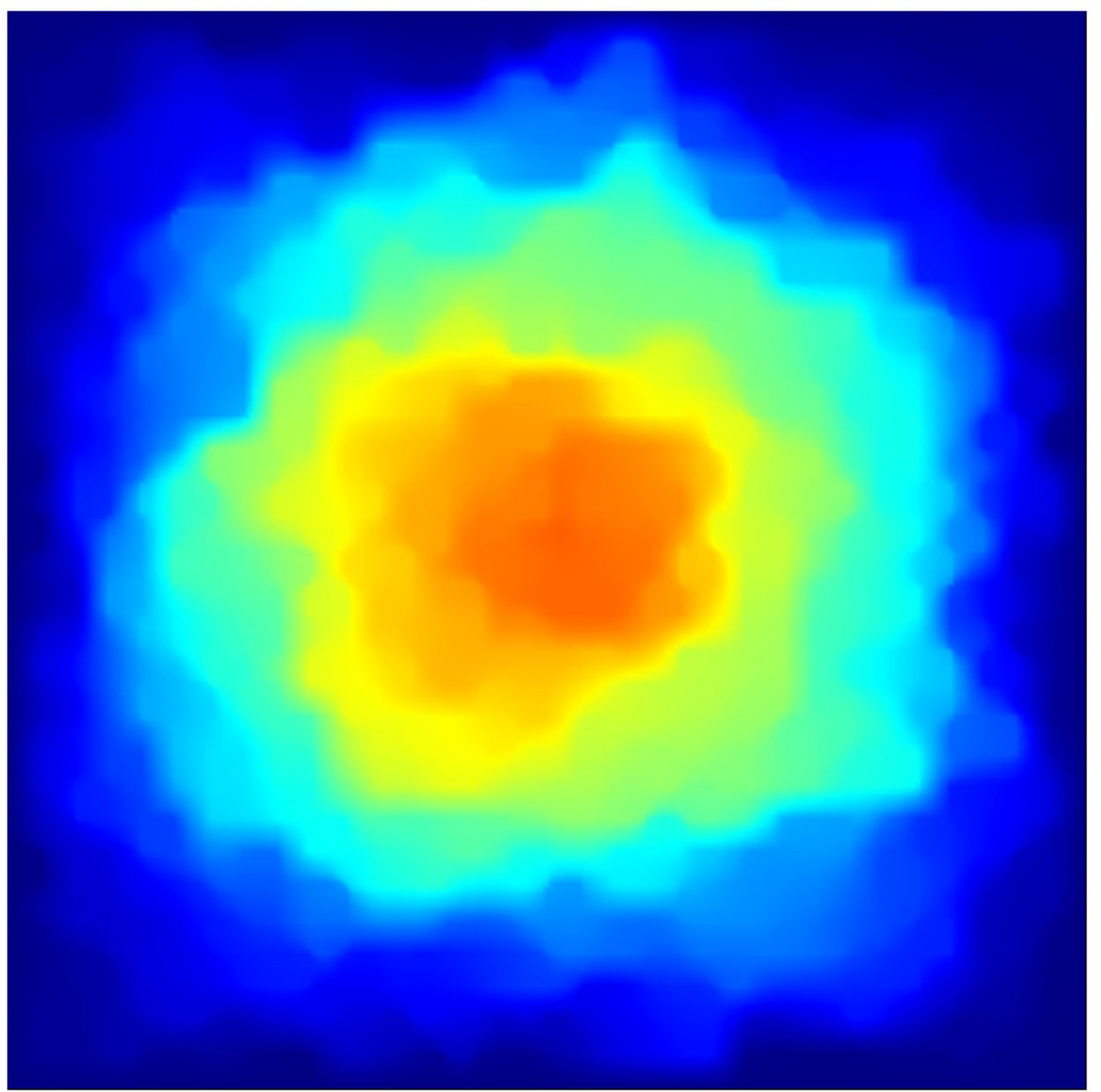}
\includegraphics[width=.25\textwidth]{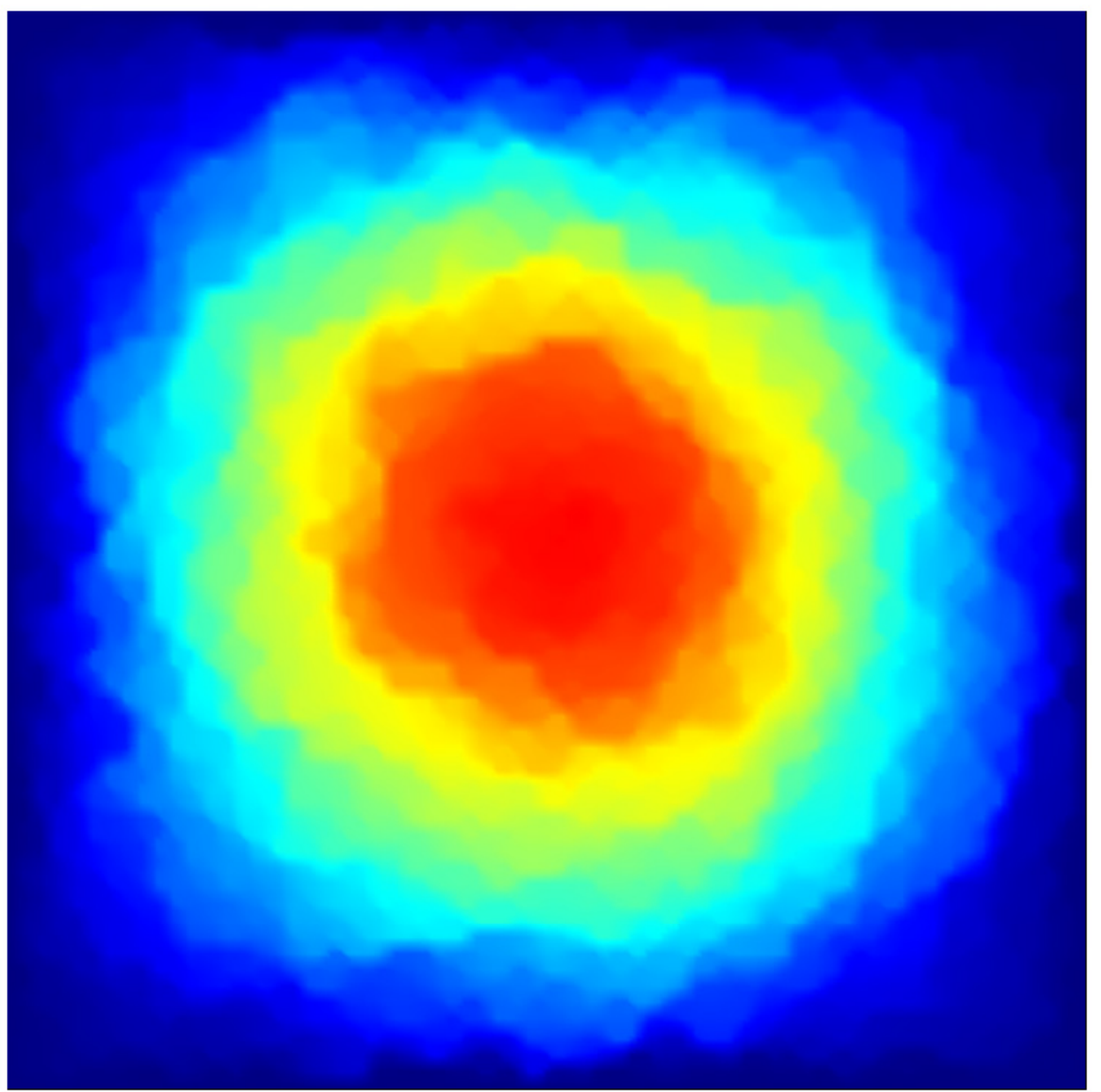}
\includegraphics[width=.05\textwidth]{colbar.pdf}
\caption{Top row: $A(\eps^{-1}\vec{x},\omega)$, for a fixed $\omega$, 
with $\eps = 1/2, 1/4$, and $1/8$; bottom row, the respective solutions $u^\eps(\vec{x}, \omega)$.}
\label{fig:homog2d}
\end{figure}

Below, we study a homogenization result in $\R^n$, which shows that under assumptions of
stationarity and ergodicity, a homogenized medium exists. 
As we shall see shortly, in this general $n$-dimensional case, unlike the
one-dimensional problem, there is no closed-form analytic formula for the homogenized coefficients. (Analytic
formulas for the homogenized coefficients are available only in some special cases in two dimensions~\cite{Kozlov94}.) 
Note that even in the case of periodic structures in several space dimensions, 
analytic formulas for the homogenized coefficient are not available; however, in the periodic
case, the characterization of the effective coefficients suggests a straightforward computational 
method for computing the homogenized conductivity matrix. This is no longer the case 
in the stochastic case, where the numerical approximation of homogenized coefficients is 
generally a difficult problem; see also Remark~\ref{rmk:periodization} below. 

\subsection{The  homogenization theorem in $\R^n$}
In this section, we present the stochastic homogenization theorem for linear elliptic operators 
in $\R^n$. The discussion in this section follows in similar lines as that presented 
in~\cite{Kozlov94}. Consider the problem, 
\begin{equation}  \label{equ:ergodic-problem}
\left\{ \begin{array}{rcl}
-\div(A(\eps^{-1}\vec{x}, \omega) \grad{u^\eps(\vec{x}, \omega)})=f(\vec{x}) & 
  \mbox{ in } & \O, \\ 
  u^\eps(\vec{x}, \omega) = 0 & \mbox{ on } & \partial\O.
\end{array}\right. 
\end{equation}
Here $\O$ is a bounded domain in $\R^n$, $f \in L^2(\O)$ is a deterministic source 
term, and $A$ is a stationary and ergodic random field. That is, we assume that
\begin{equation}
A(\vec{x}, \omega) = \A(T_\vec{x} (\omega)), \quad \forall \vec{x} \in \R^n, 
                                                         \,\omega \in \Omega,
\end{equation}
where $T = \{T_\vec{x}\}_{\vec{x} \in \R^n}$ is an $n$-dimensional 
measure preserving and ergodic dynamical system, and $\A$ is a measurable function
from $\Omega$ to $\R^{n \times n}_\text{sym}$ that is uniformly bounded and positive definite. 
We define the set of all such $\A$ as follows. For positive constants $0 < \nu_1 \le \nu_2$ let 
\begin{multline*}
        \Es(\nu_1,\nu_2, \Omega)
        =
        \{ \A:\Omega \to \Rnnsym : \quad
                \A \text{ is measurable and } \\ 
                \nu_1|\vec{\xi}|^2
                \le \vec{\xi} \cdot \A(\omega) \vec{\xi}
                \le \nu_2 |\vec{\xi}|^2\quad\forall \vec{\xi} \in \R^n,
                \,\text{for almost all } \omega \in \Omega\}.
\end{multline*}
\noindent
Note that here $|\cdot|$ denotes the Euclidean norm in $\R^n$; i.e., $|\vec{\vec{\xi}}|^2 = \sum_{i = 1}^n\xi_i^2$.
The following homogenization result (cf.~\cite[Theorem 7.4]{Kozlov94}) 
provides a characterization of the homogenized matrix for stationary and
ergodic diffusive media.
\begin{theorem} \label{thm:homogthm-main}
Let $\A:\Omega \to \R^{n \times n}$ be in $\Es(\nu_1, \nu_2, \Omega)$
for some $0 < \nu_1 \leq \nu_2$.
Moreover, assume that $T = \{T_\vec{x}\}_{\vec{x} \in \R^n}$ is a measure preserving and ergodic dynamical
system. Then, for almost all $\omega\in\Omega$,
the realization $\A_T(\cdot, \omega)$ admits homogenization, and the homogenized 
matrix $\A^0$ is characterized by,
\begin{equation} \label{equ:A0_char}
        \A^0 \boldsymbol\xi  = \int_\Omega \A(\omega) \big(\boldsymbol\xi +
        \vec{v}_{\boldsymbol\xi}(\omega)\big) \, \mu(d\omega), \quad \forall \boldsymbol\xi \in
\R^n,
\end{equation}
where $\vec{v}_{\boldsymbol\xi}$ is the solution to the
following auxiliary problem:
Find $\vec{v} \in \Vpot$ (recall the definition of $\Vpot$ in~\eqref{equ:Spaces})
such that
\begin{equation} \label{equ:auxprob}
   \int_\Omega \A(\omega)\big(\boldsymbol\xi + \vec{v}(\omega)\big) \cdot \vec{\varphi}(\omega) \, \mu(d\omega) = 0, 
   \quad \forall \vec{\varphi} \in \Vpot.
\end{equation}
\end{theorem}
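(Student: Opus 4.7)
The plan is to prove homogenization by combining weak compactness in $H_0^1(\O)$ with the Div--Curl Lemma, using the auxiliary problem \eqref{equ:auxprob} to supply oscillating test fields with the required curl-free and divergence-free structure. The symmetry of $\A$ will make the identification of $\A^0$ particularly clean.

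\textbf{Step 1: Solvability of the auxiliary problem.} For each $\boldsymbol\xi \in \R^n$ I would apply Lax--Milgram on the Hilbert space $\Vpot$ to the bilinear form $b(\vec u, \vec\varphi) = \int_\Omega \A(\omega)\vec u(\omega)\cdot \vec\varphi(\omega)\, \mu(d\omega)$, which is continuous and coercive thanks to $\A\in\Es(\nu_1,\nu_2,\Omega)$, with right-hand side $\vec\varphi \mapsto -\int_\Omega \A(\omega)\boldsymbol\xi\cdot \vec\varphi(\omega)\,\mu(d\omega)$. This gives a unique $\vec v_{\boldsymbol\xi}\in\Vpot$ depending linearly on $\boldsymbol\xi$, with $\Norm{\vec v_{\boldsymbol\xi}}{\L^2(\Omega)}\le C|\boldsymbol\xi|$. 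The key structural consequence is that \eqref{equ:auxprob} says $\A(\boldsymbol\xi + \vec v_{\boldsymbol\xi}) \perp \Vpot$ in $\L^2(\Omega)$; by the Weyl decomposition (Theorem~\ref{th:weyl}) in the form $\L^2(\Omega)=\Vpot\oplus\Lsol$, this forces $\A(\boldsymbol\xi + \vec v_{\boldsymbol\xi})\in\Lsol$, so its realization is a.s.\ divergence-free on $\R^n$.

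\textbf{Step 2: A priori estimates.} Fix $\omega$ in a set of full measure to be selected below. Exactly as in Theorem~\ref{thm:homog1D}, testing the weak form of \eqref{equ:ergodic-problem} with $\phi=u^\eps$ and using Poincar\'e gives $\Norm{u^\eps(\cdot,\omega)}{H^1_0(\O)}\le C\Norm{f}{L^2(\O)}/\nu_1$. The flux $\vec p^\eps(\vec x,\omega) := \A(\eps^{-1}\vec x,\omega)\grad u^\eps$ is therefore bounded in $\L^2(\O)$, and $\div \vec p^\eps = -f$ holds identically in $\eps$, so in particular $\div \vec p^\eps$ converges strongly in $H^{-1}(\O)$. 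Along a subsequence $u^\eps\weak u^0$ in $H^1_0(\O)$ and $\vec p^\eps\weak \vec p^0$ in $\L^2(\O)$; everything reduces to showing $\vec p^0 = \A^0\grad u^0$ for the $\A^0$ in \eqref{equ:A0_char}.

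\textbf{Step 3: Oscillating test fields via Birkhoff.} For each $\boldsymbol\xi$ introduce
\[
\vec w^\eps_{\boldsymbol\xi}(\vec x,\omega) := \boldsymbol\xi + (\vec v_{\boldsymbol\xi})_T(\eps^{-1}\vec x,\omega),\qquad
\vec q^\eps_{\boldsymbol\xi}(\vec x,\omega) := \A(\eps^{-1}\vec x,\omega)\,\vec w^\eps_{\boldsymbol\xi}(\vec x,\omega).
\]
Since $\vec v_{\boldsymbol\xi}\in\Vpot$ with $\ave{\vec v_{\boldsymbol\xi}}=\vec 0$, the definition of $\Lpot$ in \eqref{equ:Spaces} ensures $\curl \vec w^\eps_{\boldsymbol\xi}(\cdot,\omega)=0$ a.s., and Corollary~\ref{cor:Birkhoff} gives $\vec w^\eps_{\boldsymbol\xi}(\cdot,\omega)\weak \boldsymbol\xi$ in $\LLoc{2}(\R^n)$. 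By Step 1, $\A(\boldsymbol\xi+\vec v_{\boldsymbol\xi})\in\Lsol$, so $\div \vec q^\eps_{\boldsymbol\xi}(\cdot,\omega)=0$ a.s., and Corollary~\ref{cor:Birkhoff} applied to $\A(\boldsymbol\xi+\vec v_{\boldsymbol\xi})$ yields $\vec q^\eps_{\boldsymbol\xi}(\cdot,\omega)\weak \A^0\boldsymbol\xi$ in $\LLoc{2}(\R^n)$, with $\A^0$ exactly as in \eqref{equ:A0_char}.

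\textbf{Step 4: Passing to the limit.} I would then apply Lemma~\ref{lem:compensated} twice: for the pair $(\vec p^\eps,\vec w^\eps_{\boldsymbol\xi})$ it yields $\vec p^\eps\cdot \vec w^\eps_{\boldsymbol\xi}\weaks \vec p^0\cdot\boldsymbol\xi$, and for $(\vec q^\eps_{\boldsymbol\xi},\grad u^\eps)$ it yields $\vec q^\eps_{\boldsymbol\xi}\cdot\grad u^\eps\weaks (\A^0\boldsymbol\xi)\cdot\grad u^0$. By the symmetry of $\A$, $\A^\eps\grad u^\eps\cdot \vec w^\eps_{\boldsymbol\xi} = \grad u^\eps \cdot \A^\eps \vec w^\eps_{\boldsymbol\xi}$ pointwise, so the two scalar sequences coincide and uniqueness of weak-$\star$ limits gives $\vec p^0\cdot\boldsymbol\xi = (\A^0\boldsymbol\xi)\cdot\grad u^0$ for every $\boldsymbol\xi$, hence $\vec p^0=\A^0\grad u^0$. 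Passing to the limit in the weak form of \eqref{equ:ergodic-problem} shows that $u^0$ solves the homogenized equation. Ellipticity of $\A^0$ (by testing \eqref{equ:A0_char} against $\boldsymbol\xi$ and invoking \eqref{equ:auxprob}) guarantees uniqueness of $u^0$, which forces $u^0$ to be deterministic and upgrades subsequential to full-sequence convergence.

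\textbf{Main obstacle.} The most delicate point is Step 3. Birkhoff's Theorem produces an $\omega$-null set that depends on $\boldsymbol\xi$, but the Div--Curl argument in Step 4 needs the convergences of $\vec w^\eps_{\boldsymbol\xi}$ and $\vec q^\eps_{\boldsymbol\xi}$ to hold on one common full-measure set of $\omega$ for every $\boldsymbol\xi\in\R^n$. I would handle this by choosing a countable dense family $\{\boldsymbol\xi_k\}\subset\R^n$, intersecting the associated null sets, and then extending the statement to every $\boldsymbol\xi$ using the linear dependence $\boldsymbol\xi\mapsto \vec v_{\boldsymbol\xi}$ together with the uniform bound $\Norm{\vec v_{\boldsymbol\xi_k}-\vec v_{\boldsymbol\xi}}{\L^2(\Omega)}\le C|\boldsymbol\xi_k-\boldsymbol\xi|$. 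A secondary subtlety worth checking carefully is that membership in $\Lpot$ and $\Lsol$, as defined in \eqref{equ:Spaces}, really does transfer to a.e.\ pointwise curl-freeness or divergence-freeness of the realization on $\R^n$, which is what the Div--Curl Lemma requires.
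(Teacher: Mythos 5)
Your proposal is correct and follows essentially the same route as the paper's proof: solve the auxiliary problem in $\Vpot$, use Weyl's decomposition to place $\A(\boldsymbol\xi+\vec v_{\boldsymbol\xi})$ in $\Lsol$, obtain the oscillating fields' weak limits from Birkhoff's theorem, and identify $\vec p^0=\A^0\grad u^0$ via two applications of the Div--Curl Lemma together with the pointwise symmetry identity. Your explicit treatment of the $\boldsymbol\xi$-dependent null sets (countable dense family plus linearity of $\boldsymbol\xi\mapsto\vec v_{\boldsymbol\xi}$) is a sound refinement of a point the paper leaves implicit.
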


Before presenting the proof of this result, we collect some observations.
\begin{remark}\label{rmk:periodization}
Note that Theorem~\ref{thm:homogthm-main} provides an abstract characterization for $\A^0$, which
does not lend itself directly to a numerical recipe for computing $\A^0$. 
While the discussion in the present note does not include numerical methods,
we point out that numerical approaches for computing $\A^0$ are available.
See e.g.,~\cite{Owhadi02,Bourgeat04} that describe the method of periodization, 
which can be used to compute approximations to the homogenized matrix $\A^0$. 
\end{remark}

\begin{remark}
The above homogenization result applies to random diffusive media whose conductivity functions are
described by stationary and ergodic random fields.
From a practical point of view, 
such ergodicity assumptions are mathematical niceties that cannot be verified in real-world problems. 
One possible idea is to construct 
mathematical definitions of certain ``idealized'' random structures
for which one can prove ergodicity and use such structures as potential 
modeling tools in real applications. An example of such an effort 
is done in~\cite{AlexanderianRathinamRostamian12}, 
where, starting from their physical descriptions, a class of stationary and 
ergodic tile-based random structures has been constructed. 
See also the book~\cite{Torquato}, which provides a comprehensive
treatment of means for statistical characterization of random heterogeneous materials. 
\end{remark}

\begin{remark}
The form of the homogenized coefficient in one space dimension given by 
Theorem~\ref{thm:homog1D} 
can be derived by specializing Theorem~\ref{thm:homogthm-main} to the case of $n = 1$. 
To see this, we note that in the one-dimensional case, the homogenized coefficient is characterized as follows: 
For $\xi \in \R$,
\begin{equation} \label{equ:abar-char}
   \azero \xi = \int_\Omega a(\omega)(\xi + v_\xi(\omega)) \, \mu(d\omega),
\end{equation}
where $v_\xi \in \Vpot$ is solution to the auxiliary problem~\eqref{equ:auxprob}. Hence, using Weyl's theorem, 
we may write,
\begin{equation} \label{equ:aux}
   a\,(\xi + v_\xi) \in \Lsol.
\end{equation}
To find $\azero$ we need to only consider $\xi = 1$ in~\eqref{equ:abar-char}. Denote,
\begin{equation} \label{equ:q}
 q(\omega) = a(\omega)(1 + v_1(\omega)),
\end{equation}
and note that by \eqref{equ:aux}, and recalling the definition of $\Lsol$, we have that 
for almost all $\omega$, $q\big(T_x(\omega)\big)$ is a
constant (depending on $\omega$). That is, for almost all $\omega \in \Omega$,
$q\big(T_x(\omega)\big) = q(\omega)$, for all $x \in \R$.
Therefore, by ergodicity of the dynamical system $T$, we
have
$q(\omega) \equiv \mathrm{const} =: \bar{q}$ almost everywhere.
Thus, using \eqref{equ:q} we have 
$v_1(\omega) = {\bar{q}}/{a(\omega)} - 1$, and since
$\ave{v_1} = 0$, we have $\bar{q} = \ave{1/a}^{-1}$.
Then, \eqref{equ:abar-char} gives 
\begin{equation*}
\azero = \int_\Omega a(\omega)(1 + v_1(\omega)) \, \mu(d\omega) 
      = \int_\Omega \bar{q} \, \mu(d\omega)
      = \bar{q} 
      = \ave{1/a}^{-1}.
\end{equation*}
\end{remark}

Next, we turn to the proof of Theorem~\ref{thm:homogthm-main}:

\begin{proof}
First we note that the characterization of $\A^0$ in the statement of the theorem along with the properties of $\A$ allows
us to, through a standard argument, conclude that $\A^0$ is a symmetric positive definite matrix
(see Section~\ref{sec:variational} for a proof of this fact). 
Consider the family of Dirichlet problems
\begin{equation*}
         \left\{ \begin{array}{lcl}
         -\div\big(\A_T^\eps(\vec{x},\omega) \grad{u^\eps(\vec{x},
\omega)}\big) = f(\vec{x}) & 
         \mbox{ in } & \O,  \\
         u^\eps(\vec{x}, \omega) = 0 & \mbox{ on } & \partial \O,
         \end{array}\right.
\end{equation*}
whose weak formulation is given by,
\begin{equation}\label{equ:weakform_nd}
\int_\O \A_T^\eps(\cdot, \omega) \grad{u^\eps(\cdot, \omega)} \cdot \grad{\phi} \, d\vec{x} 
= \int_\O f \phi \, d\vec{x}, \quad \forall \phi \in H^1_0(\O).
\end{equation}
For a fixed $\omega$, we can use arguments similar to 
those in the one-dimensional case, to show that the family of functions 
$u^\eps(\cdot, \omega)$ is
bounded in $H^1_0(\O)$ and the family of functions 
$\vec{\sigma}^\eps(\cdot, \omega) = \A_T^\eps(\cdot, \omega)\grad{u^\eps(\cdot, \omega)}$
is bounded in $\L^2(\O)$. Therefore, 
(along a subsequence) as $\eps \to 0$ 
\begin{eqnarray}
   u^\eps(\cdot, \omega) \weak u^0 \quad \mbox{ in } H^1_0(\O),\label{equ:u-weak_nd} \\
   \vec{\sigma}^\eps(\cdot, \omega) = \A_T^\eps(\cdot, \omega) 
                                 \grad{u^\eps(\cdot, \omega)} \weak \vec{\sigma}^0 
   \quad \mbox{ in } \L^2(\O).\label{equ:fluxconv_nd}
\end{eqnarray}
Note that~\eqref{equ:u-weak_nd} also implies that $\grad{u^\eps(\cdot, \omega)} \weak \grad{u^0}$ in $\L^2(\O)$.
Our goal is to show that $\vec{\sigma}^0 = \A^0 \grad{u^0}$ and
that the limit $u^0$ is the (weak) solution of the problem
\begin{equation}\label{equ:homogenized_pde}
\left\{ \begin{array}{rcl}
-\div(\A^0\grad{u^0})=f & \mbox{ in } & \O, \\ 
  u^0 = 0 & \mbox{ on } & \partial\O.
\end{array}\right.
\end{equation}
Let $\vec{\xi} \in \R^n$ be fixed but arbitrary and let $\vec{p} = \vec{p}_\vec{\xi}$ be given by,
\begin{equation}
   \vec{p} = \vec{\xi} + \vec{v}_\vec{\xi},
\end{equation}
where $\vec{v}_\vec{\xi} \in \Vpot$ solves~\eqref{equ:auxprob}.
Note that $\vec{p} \in \Lpot$ with $\ave{\vec{p}} = \vec{\xi}$.
Moreover, let $\vec{q}(\omega) = \A(\omega)\vec{p}(\omega)$ and note that 
\[
   \ave{\vec{q}} = \int_\Omega \A(\omega) \vec{p}(\omega) \, \mu(d\omega) 
   = \int_\Omega \A(\omega) \big(\vec{\xi} + \vec{v}_\vec{\xi}(\omega)\big) \, \mu(d\omega)
   = \A^0 \vec{\xi},
\]
where the last equality follows from~\eqref{equ:A0_char}. 
Moreover, let us note that since $\vec{v}_\vec{\xi}$ satisfies~\eqref{equ:auxprob}, invoking Weyl's decomposition theorem,  
we have that $\vec{q}(\omega) = \A(\omega) \big(\vec{\xi} + \vec{v}_\vec{\xi}(\omega)\big)$ 
belongs to the space $\Lsol$.

By ergodicity of the dynamical system $T$, we can invoke the Birkhoff 
Ergodic Theorem to conclude that, for almost all $\omega \in \Omega$, 
\[
   \vec{p}^\eps_T(\cdot, \omega) \weak \vec{\xi}, \mbox{ in } \L^2(\O),
\quad \mbox{ and } \quad
   \vec{q}^\eps_T(\cdot, \omega) \weak \A^0\vec{\xi}, 
       \mbox { in }  \L^2(\O).
\]

Next, since $\A(\omega) \in \Rnnsym$, we can write, 
\begin{equation} \label{equ:obv}
  \vec{\sigma}^\eps(\vec{x}, \omega) \cdot \vec{p}_T^\eps(\vec{x}, \omega)
 = \A^\eps_T(\vec{x}, \omega)\grad u^\eps(\vec{w}, \omega) \cdot \vec{p}_T^\eps(\vec{x}, \omega)
  = \grad{u}^\eps(\vec{x}, \omega) \cdot \vec{q}^\eps_T(\vec{x}, \omega). 
\end{equation}
Let us consider both sides of \eqref{equ:obv}. Note that
$-\div \vec{\sigma}^\eps(\cdot, \omega) = f$
and $\curl \vec{p}^\eps(\cdot, \omega) = 0$ for every $\eps$;
this along with the weak convergence
of $\vec{\sigma}^\eps(\cdot, \omega)$ and $\vec{p}_T^\eps(\cdot, \omega)$
allows us to use Lemma~\ref{lem:compensated} to get
\begin{equation} \label{equ:weaks-lhs}
\vec{\sigma}^\eps(\cdot, \omega) \cdot \vec{p}_T^\eps(\cdot, \omega)
    \weaks
\vec{\sigma}^0 \cdot \vec{\xi}. 
\end{equation}
On the other hand, considering the right-hand side of \eqref{equ:obv},
we note that for every $\eps$, we have $\curl \grad{u^\eps} = 0$ and
(for almost all $\omega \in \Omega$) $\div{\vec{q}^\eps(\cdot, \omega)} = 0$. 
Therefore, again we use Lemma~\ref{lem:compensated}
to get
\begin{equation} \label{equ:weaks-rhs}
 \grad{u}^\eps(\cdot, \omega) \cdot \vec{q}^\eps_T(\cdot, \omega) 
 \weaks
 \grad{u}^0 \cdot \A^0\vec{\xi}. 
\end{equation}
Finally, using \eqref{equ:obv} along with \eqref{equ:weaks-lhs}
and \eqref{equ:weaks-rhs} we have
$\grad{u^0} \cdot \A^0 \vec{\xi} = \vec{\sigma}^0 \cdot \vec{\xi}$. Therefore, 
by symmetry of $\A^0$ 
\begin{equation*}
   \vec{\sigma}^0 \cdot \vec{\xi} = \A^0 \grad{u^0} \cdot \vec{\xi},
\end{equation*}
and since $\vec{\xi}$ was arbitrary we have $\vec{\sigma}^0 = \A^0 \grad{u^0}$.
Therefore, recalling the definition of $\vec{\sigma}^\eps$ and~\eqref{equ:fluxconv_nd},
we have that
\[
    \A_T^\eps(\cdot, \omega) \grad{u^\eps(\cdot, \omega)} \weak \A^0 \grad{u^0}, \quad \mbox{in } \L^2(\O).
\]
Hence, we can pass to limit $\eps \to 0$ in~\eqref{equ:weakform_nd} to get 
\[
\int_\O \A^0 \grad{u^0} \cdot \grad{\phi} \, d\vec{x} 
= \int_\O f \phi \, d\vec{x}, \quad \forall \phi \in H^1_0(\O),
\]
which says that $u^0$ is weak solution to the problem~\eqref{equ:homogenized_pde}.
Note also that since $\A^0$ and $f$ are deterministic, $u^0$ does not depend on $\omega$.
\end{proof}

\subsection{Variational characterization of the homogenized matrix} 
\label{sec:variational}
Let the probability space $(\Omega, \F, \mu)$ be as in the previous subsection,  
and let $\A \in \Es(\nu_1,\nu_2, \Omega)$ be as in Theorem~\ref{thm:homogthm-main}.
For an arbitrary $\boldsymbol\xi\in\R^n$ we let  
$\J_{\boldsymbol\xi} : \Vpot \to \R$ be the quadratic functional below:
\begin{equation}
\J_{\boldsymbol\xi}(\vec{v}) =
\int_\Omega 
\big(\boldsymbol\xi + \vec{v}(\omega)\big) \cdot \A(\omega) \big(\boldsymbol\xi + \vec{v}(\omega)\big) \, \mu(d\omega), \quad \vec{v} \in \Vpot.
\end{equation}
Note that the dynamical system $T$ in definition of $\Vpot$ here is as in Theorem~\ref{thm:homogthm-main}.
The functional $\J_{\boldsymbol\xi}$ is strictly convex, coercive,
and bounded from below, and therefore, it has a unique minimizer in~$\Vpot$.
The Fr\'{e}chet derivative of $\J_{\boldsymbol\xi}$ at the minimizer
$\vec{v}_{\boldsymbol\xi}$ in any direction $\boldsymbol\varphi$
is zero, that is:
\begin{equation} \label{eq:weakVpot}
        \int_\Omega\A(\omega) \big(\boldsymbol\xi + \vec{v}_{\boldsymbol\xi}(\omega) \big) \cdot
        \boldsymbol\varphi(\omega) \, \mu(d\omega) = 0, \text{ for all } \boldsymbol\varphi \in
        \Vpot.
\end{equation}
Therefore in view of Weyl's decomposition we have
\begin{equation*} 
        \A (\boldsymbol\xi + \vec{v}_{\boldsymbol\xi}) \in \Lsol.
\end{equation*}
It is clear from~\eqref{eq:weakVpot} that
$\vec{v}_{\boldsymbol\xi}$ is linear in~$\boldsymbol\xi$.  Hence,
the expected value
$\ave{\A (\boldsymbol\xi + \vec{v}_{\boldsymbol\xi})}$, viewed as a
function of $\boldsymbol\xi$, is a linear mapping from $\R^n$ to $\R^n$.
Consequently, 
we define the matrix $\A^0$ by
\begin{equation} \label{eq:A0}
        \A^0 \boldsymbol\xi  = \int_\Omega \A(\omega) \big(\boldsymbol\xi +
        \vec{v}_{\boldsymbol\xi}(\omega)\big) \, \mu(d\omega), \quad \vec{\xi} \in \R^n.
\end{equation}
Notice that $\A^0$ defined above is the same as the homogenized matrix
in Theorem~\ref{thm:homogthm-main}. 
\newcommand{\bxi}{\vec{\xi}}
\begin{proposition} \label{lem:A^0}
The homogenized matrix $\A^0$ satisfies the following:
\begin{enumerate}
\item For every $\boldsymbol\xi \in \R^n$, $\displaystyle \boldsymbol\xi \cdot \A^0 \boldsymbol\xi
        = \inf_{\vec{v}\in\Vpot} \J_{\boldsymbol\xi} (\vec{v})$.
\item The matrix $\A^0$ is symmetric and positive definite.
\end{enumerate}
\end{proposition}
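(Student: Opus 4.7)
My plan is to exploit the Euler--Lagrange identity \eqref{eq:weakVpot} that characterizes the minimizer $\vec{v}_{\bxi}$ of $\J_{\bxi}$, together with the fact that $\vec{v}_{\bxi}$ lies in $\Vpot$ (so in particular $\ave{\vec{v}_{\bxi}} = \vec{0}$) and that $\vec{v}_{\bxi}$ depends linearly on $\bxi$. Each of the three claims (minimum value, symmetry, positive definiteness) should fall out by a short manipulation built on these two observations.

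For part (1), I would compute
\begin{equation*}
\J_{\bxi}(\vec{v}_{\bxi})
= \int_\Omega \A(\omega)\bigl(\bxi + \vec{v}_{\bxi}(\omega)\bigr) \cdot \bigl(\bxi + \vec{v}_{\bxi}(\omega)\bigr)\,\mu(d\omega)
\end{equation*}
by splitting the right factor as $\bxi + \vec{v}_{\bxi}$ and applying \eqref{eq:weakVpot} with the admissible test function $\vec\varphi = \vec{v}_{\bxi} \in \Vpot$. This kills the $\vec{v}_{\bxi}$ contribution and leaves $\int_\Omega \A(\omega)(\bxi + \vec{v}_{\bxi})\cdot \bxi\,\mu(d\omega)$, which is exactly $\bxi \cdot \A^0 \bxi$ by the definition \eqref{eq:A0}. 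Since $\vec{v}_{\bxi}$ is the unique minimizer, this value equals $\inf_{\vec v \in \Vpot} \J_{\bxi}(\vec v)$.

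For symmetry of $\A^0$, I would use polarization: for $\bxi, \boldsymbol\eta \in \R^n$,
\begin{equation*}
\boldsymbol\eta \cdot \A^0 \bxi
= \int_\Omega \boldsymbol\eta \cdot \A(\omega)\bigl(\bxi + \vec{v}_{\bxi}\bigr)\,\mu(d\omega).
\end{equation*}
Writing $\boldsymbol\eta = (\boldsymbol\eta + \vec{v}_{\boldsymbol\eta}) - \vec{v}_{\boldsymbol\eta}$ and using \eqref{eq:weakVpot} for $\vec{v}_{\bxi}$ tested against $\vec\varphi = \vec{v}_{\boldsymbol\eta} \in \Vpot$, I obtain
\begin{equation*}
\boldsymbol\eta \cdot \A^0 \bxi
= \int_\Omega \bigl(\boldsymbol\eta + \vec{v}_{\boldsymbol\eta}\bigr) \cdot \A(\omega) \bigl(\bxi + \vec{v}_{\bxi}\bigr)\,\mu(d\omega).
\end{equation*}
The pointwise symmetry $\A(\omega) \in \Rnnsym$ now makes the right-hand side manifestly symmetric in $(\bxi, \boldsymbol\eta)$, so $\boldsymbol\eta \cdot \A^0 \bxi = \bxi \cdot \A^0 \boldsymbol\eta$.

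For positive definiteness, I would exploit the ellipticity bound on $\A$ and the fact that $\vec{v}_{\bxi}$ has zero mean. Combining part (1) with the lower bound $\vec\zeta \cdot \A(\omega)\vec\zeta \ge \nu_1|\vec\zeta|^2$ gives
\begin{equation*}
\bxi \cdot \A^0 \bxi
= \J_{\bxi}(\vec{v}_{\bxi})
\ge \nu_1 \int_\Omega |\bxi + \vec{v}_{\bxi}(\omega)|^2\,\mu(d\omega)
= \nu_1\bigl(|\bxi|^2 + \ave{|\vec{v}_{\bxi}|^2}\bigr)
\ge \nu_1 |\bxi|^2,
\end{equation*}
where the cross term vanishes because $\ave{\vec{v}_{\bxi}} = \vec{0}$. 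This yields $\bxi \cdot \A^0\bxi \ge \nu_1 |\bxi|^2$ and hence positive definiteness. I do not anticipate a genuine obstacle: the only subtlety is ensuring that $\vec{v}_{\bxi}$ itself and $\vec{v}_{\boldsymbol\eta}$ are legitimate test functions in \eqref{eq:weakVpot}, which is immediate since they belong to $\Vpot$ by construction.
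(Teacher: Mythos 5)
Your proposal is correct and follows essentially the same route as the paper: part (1) by testing \eqref{eq:weakVpot} with $\vec{v}_{\boldsymbol\xi}$ itself, symmetry by reducing $\boldsymbol\eta \cdot \A^0 \boldsymbol\xi$ to the manifestly symmetric bilinear expression $\int_\Omega (\boldsymbol\eta + \vec{v}_{\boldsymbol\eta})\cdot \A (\boldsymbol\xi + \vec{v}_{\boldsymbol\xi})\,d\mu$, and positive definiteness from the ellipticity bound together with $\ave{\vec{v}_{\boldsymbol\xi}} = \vec{0}$. The only cosmetic differences are that the paper phrases symmetry in terms of the standard basis vectors $\vec{e}_i, \vec{e}_j$ rather than a general polarization, and bounds $\int_\Omega |\boldsymbol\xi + \vec{v}_{\boldsymbol\xi}|^2\,d\mu \ge \bigl|\int_\Omega (\boldsymbol\xi + \vec{v}_{\boldsymbol\xi})\,d\mu\bigr|^2$ via Jensen's inequality instead of expanding the square and dropping the $\ave{|\vec{v}_{\boldsymbol\xi}|^2}$ term.
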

\begin{proof} 
Let us note that,
\begin{multline*}
        \inf_{\vec{v}\in\Vpot} \J_{\boldsymbol\xi} (\vec{v})
        = \J_{\boldsymbol\xi}(\vec{v}_{\boldsymbol\xi}) 
        = \int_\Omega \big(\boldsymbol\xi + \vec{v}_{\boldsymbol\xi}(\omega)\big)
        \cdot
        \A(\omega) \big(\boldsymbol\xi + \vec{v}_{\boldsymbol\xi}(\omega)\big) \, \mu(d\omega) \\
        = \boldsymbol\xi \cdot \int_\Omega                    
        \A(\omega) \big(\boldsymbol\xi + \vec{v}_{\boldsymbol\xi}(\omega)\big) \, \mu(d\omega) 
        + \int_\Omega  \vec{v}_{\boldsymbol\xi}(\omega) \cdot 
        \A(\omega) \big(\boldsymbol\xi + \vec{v}_{\boldsymbol\xi}(\omega)) \, \mu(d\omega). 
\end{multline*}
Now, the first integral in the right-hand side reduces to
$ \boldsymbol\xi \cdot \A^0 \boldsymbol\xi $ due to~\eqref{eq:A0}, and 
the second integral vanishes because $\vec{v}_\vec{\xi}$ and  $\A (\vec{\xi} + \vec{v}_{\vec{\xi}})$ 
are orthogonal in $\L^2(\Omega)$.

\newcommand{\ei}{\vec{e}_i}
\newcommand{\ej}{\vec{e}_j}
\newcommand{\vi}{\vec{v}_i}
\newcommand{\vj}{\vec{v}_j}
To show $\A^0$ is symmetric, we proceed as follows. Let $\vec{e}_i$ and $\vec{e}_j$ 
be $i^{th}$ and $j^{th}$ standard basis vectors in $\R^n$, and 
let $\vec{v}_i$ and $\vec{v}_j$
be minimizers in $\Vpot$ of $\J_{\vec{e}_i}$ and $\J_{\vec{e}_j}$ 
respectively. It is straightforward to see $\ei \cdot \A^0 \ej = \int_\Omega (\ei + \vi) \cdot \A (\ej + \vj) \, d\mu$. 
Thus, symmetry of $\A^0$ follows from symmetry of $\A$. 
As for positive definiteness, we note 
\begin{multline*}
   \bxi \cdot \A^0 \bxi 
   = \int_\Omega \big(\bxi + \vec{v}_\bxi(\omega)\big) \cdot \A(\omega)\big(\bxi + \vec{v}_\bxi(\omega)\big) \, \mu(d\omega)
   \geq \nu_1 \int_\Omega |\bxi + \vec{v}_\bxi(\omega)|^2 \, \mu(d\omega)\\ 
   \geq \nu_1 \left| \int_\Omega \big(\bxi + \vec{v}_\bxi(\omega)\big) \, \mu(d\omega) \right|^2
   = \nu_1 |\bxi|^2. \qedhere
\end{multline*}
\end{proof}

\section{Epilogue}\label{sec:epilogue}
In this article we took a brief tour of stochastic homogenization by studying
homogenization of linear elliptic PDEs of divergence form with stationary and
ergodic coefficient functions.  The goal of our discussion was to provide an
accessible entry into a very rich theory that is elaborated in detail in books
such as~\cite{Kozlov94,Chechkin}, which we refer to for in-depth coverage of 
various aspects of stochastic homogenization.  Also,
we mention again the book~\cite{Tartar09} by L.\ Tartar, on the general theory
of homogenization, that is an excellent resource for mathematicians working in
the area as well as those who are entering the field.
We end our discussion by giving some pointers for further reading.

Our discussion focused on homogenization of linear elliptic PDEs with random
coefficients.  The homogenization of \emph{nonlinear} PDEs involves many
additional difficulties both in theory as well as in numerical computations.
We refer to the book~\cite{Pankov97} as well as the
articles~\cite{dal1986a,dal1986b,caffarelli2005homogenization,caffarelli2010rates}
for stochastic homogenization theory for nonlinear problems.  See
also~\cite{efendiev-pankov-04b,efendiev-pankov-04a}, which concern numerical
methods for stochastic homogenization of nonlinear PDEs.

Stochastic homogenization continues to be an active area of research.  Recent
developments in the area
include the works~\cite{blanc2012variance,gloria2012numerical,gloria2013quantification,
gloria2013quantitative,gloria2011optimal,ConlonSpencer2014}. We also point to
the survey article~\cite{gloria2012survey}, which provides a review of the
state-of-the-art of numerical methods for homogenization of linear elliptic
equations with random coefficients. Recent works in homogenization of random
nonlinear PDEs include the articles~\cite{armstrong2012,armstrong2014}.  See
also~\cite{ArmstrongSouganidis13,ArmstrongCardaliaguetSouganidis14}, which
concern stochastic homogenization of Hamilton-Jacobi equations.

\section*{Acknowledgements}
I would like to thank Matteo Icardi and H{\aa}kon Hoel for reading through an earlier draft
of this work and giving me helpful feedback.
\bibliographystyle{abbrv}
\bibliography{refs}

\def\cprime{$'$}
\begin{thebibliography}{10}

\bibitem{alexanderian2010}
A.~Alexanderian.
\newblock {\em Random composite media: Homogenization, modeling, simulation,
  and material symmetry}.
\newblock PhD thesis, University of Maryland, Baltimore County, 2010.

\bibitem{AlexanderianRathinamRostamian12}
A.~Alexanderian, M.~Rathinam, and R.~Rostamian.
\newblock Homogenization, symmetry, and periodization in diffusive random
  media.
\newblock {\em Acta Math. Sci. Ser. B Engl. Ed.}, 32(1):129--154, 2012.

\bibitem{ArmstrongCardaliaguetSouganidis14}
S.~N. Armstrong, P.~Cardaliaguet, and P.~E. Souganidis.
\newblock Error estimates and convergence rates for the stochastic
  homogenization of {H}amilton-{J}acobi equations.
\newblock {\em J. Amer. Math. Soc.}, 27(2):479--540, 2014.

\bibitem{armstrong2012}
S.~N. Armstrong and C.~K. Smart.
\newblock Stochastic homogenization of fully nonlinear uniformly elliptic
  equations revisited.
\newblock {\em Calculus of Variations and Partial Differential Equations},
  pages 1--14, 2012.

\bibitem{armstrong2014}
S.~N. Armstrong and C.~K. Smart.
\newblock Regularity and stochastic homogenization of fully nonlinear equations
  without uniform ellipticity.
\newblock {\em The Annals of Probability}, 42(6):2558--2594, 11 2014.

\bibitem{ArmstrongSouganidis13}
S.~N. Armstrong and P.~E. Souganidis.
\newblock Stochastic homogenization of level-set convex {H}amilton-{J}acobi
  equations.
\newblock {\em Int. Math. Res. Not. IMRN}, (15):3420--3449, 2013.

\bibitem{ArnoldAvez68}
V.~I. Arnold and A.~Avez.
\newblock {\em Ergodic problems of classical mechanics}.
\newblock Translated from the French by A. Avez. W. A. Benjamin, Inc., New
  York-Amsterdam, 1968.

\bibitem{Papa78}
A.~Bensoussan, J.-L. Lions, and G.~C. Papanicolaou.
\newblock {\em Asymptotic analysis for periodic structures}, volume~5 of {\em
  Studies in Mathematics and its Applications}.
\newblock North-Holland Publishing Co., Amsterdam, 1978.

\bibitem{blanc2012variance}
X.~Blanc, R.~Costaouec, C.~Le~Bris, and F.~Legoll.
\newblock Variance reduction in stochastic homogenization using antithetic
  variables.
\newblock {\em Markov Processes and Related Fields}, 18(1):31--66, 2012.

\bibitem{blanc2007stochastic}
X.~Blanc, C.~Le~Bris, and P.-L. Lions.
\newblock Stochastic homogenization and random lattices.
\newblock {\em Journal de math{\'e}matiques pures et appliqu{\'e}es},
  88(1):34--63, 2007.

\bibitem{Bourgeat04}
A.~Bourgeat and A.~Piatnitski.
\newblock Approximations of effective coefficients in stochastic
  homogenization.
\newblock {\em Ann. Inst. H. Poincar\'e Probab. Statist.}, 40(2):153--165,
  2004.

\bibitem{BrinStuck2002}
M.~{Brin} and G.~{Stuck}.
\newblock {\em {Introduction to dynamical systems.}}
\newblock Cambridge University Press, 2002.

\bibitem{caffarelli2010rates}
L.~A. Caffarelli and P.~E. Souganidis.
\newblock Rates of convergence for the homogenization of fully nonlinear
  uniformly elliptic pde in random media.
\newblock {\em Inventiones mathematicae}, 180(2):301--360, 2010.

\bibitem{caffarelli2005homogenization}
L.~A. Caffarelli, P.~E. Souganidis, and L.~Wang.
\newblock Homogenization of fully nonlinear, uniformly elliptic and parabolic
  partial differential equations in stationary ergodic media.
\newblock {\em Communications on pure and applied mathematics}, 58(3):319--361,
  2005.

\bibitem{Chechkin}
G.~A. Chechkin, A.~L. Piatnitski, and A.~S. Shamaev.
\newblock {\em Homogenization: Methods and applications}, volume 234 of {\em
  Translations of Mathematical Monographs}.
\newblock American Mathematical Society, Providence, RI, 2007.
\newblock Translated from the 2007 Russian original by Tamara Rozhkovskaya.

\bibitem{Choe05}
G.~H. Choe.
\newblock {\em Computational ergodic theory}, volume~13 of {\em Algorithms and
  Computation in Mathematics}.
\newblock Springer-Verlag, Berlin, 2005.

\bibitem{CioranescuDonatoBook}
D.~Cioranescu and P.~Donato.
\newblock {\em An introduction to homogenization}, volume~17 of {\em Oxford
  Lecture Series in Mathematics and its Applications}.
\newblock The Clarendon Press Oxford University Press, New York, 1999.

\bibitem{ConlonSpencer2014}
J.~Conlon and T.~Spencer.
\newblock Strong convergence to the homogenized limit of elliptic equations
  with random coefficients.
\newblock {\em Transactions of the American Mathematical Society},
  366(3):1257--1288, 2014.

\bibitem{Cornfeld82}
I.~P. Cornfeld, S.~V. Fomin, and Y.~G. Sina{\u\i}.
\newblock {\em Ergodic theory}, volume 245 of {\em Grundlehren der
  Mathematischen Wissenschaften [Fundamental Principles of Mathematical
  Sciences]}.
\newblock Springer-Verlag, New York, 1982.
\newblock Translated from the Russian by A. B. Sosinski{\u\i}.

\bibitem{Dacorogna08}
B.~Dacorogna.
\newblock {\em Direct methods in the calculus of variations}, volume~78 of {\em
  Applied Mathematical Sciences}.
\newblock Springer, New York, 1989.

\bibitem{dal1986a}
G.~Dal~Maso and L.~Modica.
\newblock Nonlinear stochastic homogenization.
\newblock {\em Annali di matematica pura ed applicata}, 144(1):347--389, 1986.

\bibitem{dal1986b}
G.~Dal~Maso and L.~Modica.
\newblock Nonlinear stochastic homogenization and ergodic theory.
\newblock {\em J. reine angew. Math}, 368(1):28--42, 1986.

\bibitem{efendiev-pankov-04b}
Y.~Efendiev and A.~Pankov.
\newblock Numerical homogenization and correctors for nonlinear elliptic
  equations.
\newblock {\em SIAM J. Appl. Math.}, 65(1):43--68, 2004.

\bibitem{efendiev-pankov-04a}
Y.~Efendiev and A.~Pankov.
\newblock Numerical homogenization of nonlinear random parabolic operators.
\newblock {\em Multiscale Model. Simul.}, 2(2):237--268, 2004.

\bibitem{gloria2012numerical}
A.~Gloria.
\newblock Numerical approximation of effective coefficients in stochastic
  homogenization of discrete elliptic equations.
\newblock {\em ESAIM: Mathematical Modelling and Numerical Analysis},
  46(01):1--38, 2012.

\bibitem{gloria2012survey}
A.~Gloria.
\newblock Numerical homogenization: survey, new results, and perspectives.
\newblock In {\em ESAIM: Proceedings}, volume~37, pages 50--116. EDP Sciences,
  2012.

\bibitem{gloria2013quantification}
A.~Gloria, S.~Neukamm, and F.~Otto.
\newblock Quantification of ergodicity in stochastic homogenization: optimal
  bounds via spectral gap on glauber dynamics.
\newblock {\em Inventiones mathematicae}, pages 1--61, 2013.

\bibitem{gloria2013quantitative}
A.~Gloria, S.~Neukamm, and F.~Otto.
\newblock A quantitative two-scale expansion in stochastic homogenization of
  discrete linear elliptic equations.
\newblock {\em Mod{\'e}lisation math{\'e}matique et analyse num{\'e}rique},
  2013.

\bibitem{gloria2011optimal}
A.~Gloria and F.~Otto.
\newblock An optimal variance estimate in stochastic homogenization of discrete
  elliptic equations.
\newblock {\em The Annals of Probability}, 39(3):779--856, 2011.

\bibitem{Hornung1997}
U.~Hornung.
\newblock {\em Homogenization and porous media}, volume~6.
\newblock Springer, 1997.

\bibitem{Kozlov94}
V.~V. Jikov, S.~M. Kozlov, and O.~A. Ole{\u\i}nik.
\newblock {\em Homogenization of differential operators and integral
  functionals}.
\newblock Springer-Verlag, Berlin, 1994.
\newblock Translated from the Russian by G. A. Yosifian.

\bibitem{Kozlov79}
S.~M. Kozlov.
\newblock The averaging of random operators.
\newblock {\em Mat. Sb. (N.S.)}, 109(151)(2):188--202, 327, 1979.

\bibitem{Murat:1978}
F.~Murat.
\newblock Compacit{\'e} par compensation.
\newblock {\em Annali della Scuola Normale Superiore di Pisa-Classe di
  Scienze}, 5(3):489--507, 1978.

\bibitem{Olenik92}
O.~A. Ole{\u\i}nik, A.~S. Shamaev, and G.~A. Yosifian.
\newblock {\em Mathematical problems in elasticity and homogenization},
  volume~26 of {\em Studies in Mathematics and its Applications}.
\newblock North-Holland Publishing Co., Amsterdam, 1992.

\bibitem{Owhadi02}
H.~Owhadi.
\newblock Approximation of the effective conductivity of ergodic media by
  periodization.
\newblock {\em Probab. Theory Related Fields}, 125(2):225--258, 2003.

\bibitem{Pankov97}
A.~Pankov.
\newblock {\em {$G$}-convergence and homogenization of nonlinear partial
  differential operators}, volume 422 of {\em Mathematics and its
  Applications}.
\newblock Kluwer Academic Publishers, Dordrecht, 1997.

\bibitem{Papa95}
G.~C. Papanicolaou.
\newblock Diffusion in random media.
\newblock In J.~B. Keller, D.~McLaughlin, and G.~C. Papanicolaou, editors, {\em
  Surveys in applied mathematics, {V}ol.\ 1}, volume~1 of {\em Surveys Appl.
  Math.}, pages 205--253. Plenum, New York, 1995.

\bibitem{Papa79}
G.~C. Papanicolaou and S.~R.~S. Varadhan.
\newblock Boundary value problems with rapidly oscillating random coefficients.
\newblock In {\em Random fields, {V}ol. {I}, {II} ({E}sztergom, 1979)},
  volume~27 of {\em Colloq. Math. Soc. J\'anos Bolyai}, pages 835--873.
  North-Holland, Amsterdam, 1981.

\bibitem{sab-92}
K.~Sab.
\newblock On the homogenization and the simulation of random materials.
\newblock {\em European J. Mech. A Solids}, 11(5):585--607, 1992.

\bibitem{SanchezPalencia}
E.~S{\'a}nchez-Palencia.
\newblock {\em Nonhomogeneous media and vibration theory}, volume 127 of {\em
  Lecture Notes in Physics}.
\newblock Springer-Verlag, Berlin, 1980.

\bibitem{Tartar:1979}
L.~Tartar.
\newblock Compensated compactness and applications to partial differential
  equations.
\newblock In {\em Nonlinear analysis and mechanics, Heriot-Watt symposium},
  volume~4, pages 136--211. Pitman, 1979.

\bibitem{Tartar09}
L.~{Tartar}.
\newblock {\em {The general theory of homogenization. A personalized
  introduction.}}
\newblock Berlin: Springer, 2009.

\bibitem{Torquato}
S.~Torquato.
\newblock {\em Random heterogeneous materials}, volume~16 of {\em
  Interdisciplinary Applied Mathematics}.
\newblock Springer-Verlag, New York, 2002.
\newblock Microstructure and macroscopic properties.

\bibitem{Walters82}
P.~Walters.
\newblock {\em An introduction to ergodic theory}, volume~79 of {\em Graduate
  Texts in Mathematics}.
\newblock Springer-Verlag, New York, 1982.

\bibitem{yurinskii1986averaging}
V.~Yurinskii.
\newblock Averaging of symmetric diffusion in random medium.
\newblock {\em Siberian Mathematical Journal}, 27(4):603--613, 1986.

\end{thebibliography}
\end{document}